\numberwithin{equation}{section}
\newcommand{\CC}{\mathbb{C}}
\newcommand{\PP}{\mathbb{P}}
\newcommand{\QQ}{\mathbb{Q}}
\newcommand{\ZZ}{\mathbb{Z}}
\newcommand{\bC}{\mathbf{C}}
\newcommand{\cal}{\mathcal}
\def\cA{{\cal A}}
\def\cC{{\cal C}}
\def\cE{{\cal E}}
\def\cF{{\cal F}}
\def\cH{{\cal H}}
\def\cL{{\cal L}}
\def\cM{{\cal M}}
\def\cO{{\cal O}}
\def\cU{{\cal U}}
\def\fM{\mathfrak{M}}
\def\fX{\mathfrak{X}}
\def\fQ{\mathfrak{Q}}
\def\fP{\mathfrak{P}}
\newcommand{\tL}{\widetilde{L}}
\def\mapright#1{\,\smash{\mathop{\lra}\limits^{#1}}\,}
\def\virt{^{\mathrm{vir}}}
\def\lra{\longrightarrow}
\def\and{\quad{\rm and}\quad}
\def\and{\quad\text{and}\quad}
\def\mapright#1{\,\smash{\mathop{\lra}\limits^{#1}}\,}
\def\PP{\mathbb{P}}
\def\CC{\mathbb{C}}
\def\lra{\longrightarrow}
\def\mapright#1{\,\smash{\mathop{\lra}\limits^{#1}}\,}
\def\cO{\mathcal{O}}
\def\Spec{\mathrm{Spec}}
\def\loc{_{\mathrm{loc}} }
\def\vilo{\virt\loc}
\DeclareMathOperator{\coker}{coker} 
\DeclareMathOperator{\Ext}{Ext} 
\DeclareMathOperator{\Hom}{Hom}
\newtheorem{prop}{Proposition}[section]
\newtheorem{theo}[prop]{Theorem}
\newtheorem{lemm}[prop]{Lemma}
\newtheorem{coro}[prop]{Corollary}
\theoremstyle{definition}
\newtheorem{exam}[prop]{Example}
\newtheorem{defi}[prop]{Definition}
\newtheorem{rema}[prop]{Remark}
\title[LG/CY correspondence via quasi-maps]{A new method toward the Landau-Ginzburg/Calabi-Yau correspondence via quasi-maps}
\author{Jinwon Choi}
\address{Department of Mathematics and Research Institute of Natural Sciences, Sookmyung Women's University, Seoul 04310, Korea}
\email{jwchoi@sookmyung.ac.kr}
\author{Young-Hoon Kiem}
\address{Department of Mathematics and Research Institute of Mathematics, Seoul National University, Seoul 08826, Korea}
\email{kiem@math.snu.ac.kr}
\thanks{Jinwon Choi was supported by NRF grants NRF-2015R1C1A1A01054185 and NRF-2018R1C1B6005600. Young-Hoon Kiem was partially supported by Samsung Science and Technology Foundation grant SSTF-BA1601-01.
}
\subjclass[2010]{14D23, 14N35}
\def\mgn{\fM_{g,m}}
\def\mgnst{\overline{\cM}_{g,m}}
\def\fQf{\mathfrak{Q}_+ }
\def\hqmapf{{\mathfrak{X}}_+ }
\def\DM{Deligne-Mumford }
\def\fQ{\mathfrak{Q} }
\newcommand{\olog}{\omega^\mathrm{log}}
\def\otw{\omega^{\mathrm{tw}}}
\def\tw{\mathrm{tw}}
\def\ps{\mathrm{ps}}
\def\modmop{\fQ^{\epsilon=0^+}_+ }
\def\beq{\begin{equation} }
\def\eeq{\end{equation} }
\def\loc{_{\mathrm{loc}} }
\def\fX{\mathfrak{X} }
\begin{document}
\begin{abstract}
The Landau-Ginzburg/Calabi-Yau correspondence claims that the Gromov-Witten invariant of the quintic Calabi-Yau 3-fold should be related to the Fan-Jarvis-Ruan-Witten invariant of the associated Landau-Ginzburg model via wall crossings. In this paper, we consider the stack of quasi-maps with a cosection and introduce sequences of stability conditions which enable us to interpolate between the moduli stack for Gromov-Witten invariants and the moduli stack for Fan-Jarvis-Ruan-Witten invariants.
\end{abstract}
\maketitle
\section{Introduction}\label{sec1}
\subsection{Landau-Ginzburg/Calabi-Yau correspondence}
In 1993, Witten in \cite{Wit93} introduced Landau-Ginzburg (LG for short) models as diagrams of morphisms
\[\xymatrix{
\CC^N\ar[rr]^W\ar[dr] && \CC\\
&[\CC^N/G]\ar[ur] }\]
where $G$ is a finite group and $W$ is a $G$-invariant polynomial. In this paper, we will concentrate on the case
$$W:\CC^5\to \CC\text{ defined by }W(x_1,\dots,x_5)=\sum_{i=1}^5 x_i^5 \and G=\ZZ_5\le \CC^*.$$

There is a corresponding Calabi-Yau (CY for short) model for an LG model.
Consider the stack
\[ [\CC^5\times \CC/\CC^* ]\]
where $\CC^*$ acts with weights $(1,\dots,1,-5)$ and let
\[ \hat{W}(x_1,\dots, x_5,p)=p(\sum_{i=1}^5 x_i^5). \]
be the invariant polynomial on $\CC^5\times \CC$.
Then we have two open substacks
\[
\cO_{\PP^4}(-5)=(\CC^5-0)\times \CC/\CC^* \subset  [\CC^5\times \CC/\CC^* ] \supset \CC^5\times (\CC-0)/\CC^* =\CC^5/\ZZ_5.
\]
On the left side where $x=(x_1,\dots,x_5)\ne 0$, called the CY model, the open substack is $\cO_{\PP^4}(-5)$ and $\hat{W}$ descends to a section $W=\sum_{i=1}^5x_i^5$ of $\cO_{\PP^4}(5)$. Let $$Y_+=W^{-1}(0)\subset \PP^4$$ denote the quintic Calabi-Yau 3-fold defined by $W=0$.
The invariant we like to calculate on this CY side is the Gromov-Witten (GW for short) invariant $GW(Y_+)$ which virtually enumerates stable maps $C\to Y_+$ from prestable curves.

On the right side where $p\ne 0$, called the LG model, the open substack is $\CC^5/\ZZ_5$ with $\ZZ_5\le \CC^*$ (as fifth roots of unity). We can let $p=1$ and thus $\hat{W}$ becomes $W(x_1,\dots,x_5)=\sum_{i=1}^5 x_i^5$ to give us the LG model
\[ Y_-:=([\CC^5/\ZZ_5]\mapright{W} \CC).\]
The invariant for the LG side is the Fan-Jarvis-Ruan-Witten (FJRW for short) invariant which is defined as the degree of Witten's top Chern class
\[ FJRW(Y_-)=\#\{(C,L)\,|\, C\text{ twisted stable curve}, L^5\cong \otw_C\}\cap e(R^1\pi_*\cL^{\oplus 5})\]
if $\pi_*\cL=0$ where $\cL$ is a universal line bundle \cite{FJR}. Here, $L$ is an invertible sheaf on $C$, $\otw_C:=\omega_C(\sum q_i)$ is the sheaf of sections of the dualizing sheaf $\omega_C$ possibly with poles of order 1 at the \emph{orbifold marked points} $q_i$, and $\pi$ is the natural projection from the universal curve to the moduli stack.
In general, the FJRW invariant can be defined algebro-geometrically by the cosection localization \cite{CLL,KiemLi}.

The Landau-Ginzburg/Calabi-Yau correspondence is a conjectural equivalence between the GW theory of $Y_+$ and the FJRW theory of $Y_-$. Chiodo and Ruan in \cite{ChiodoRuan} proved that in genus zero the two theories are related by an explicit symplectic transformation followed by analytic continuation. The result is extended to complete intersections in projective space in \cite{Clader}.

Classical mirror symmetry compares the GW invariant of $Y_+$ with the variation of Hodge structures around the large complex structure limit point $\lambda=\infty$ for the family $(\sum_{i=1}^5x_i^5-\lambda\prod_{i=1}^5x_i=0)$ with $\lambda\in \PP^1$
while the FJRW invariant for the LG model $Y_-$ is related to that at the Gepner point $\lambda=0$.
By the analytic continuation of the variations of Hodge structures on $\PP^1-\{0,1,\infty\}$, one may expect that the GW invariants of $Y_+$ should be related to the FJRW invariant of $Y_-$. 
So, the following question seems quite natural.

\medskip
{\it Can we relate $GW(Y_+)$ with $FJRW(Y_-)$ by wall crossings?
}
\medskip

On the CY side, a sequence of stability conditions, called the $\epsilon$-stabilities with $\epsilon>0$, was introduced by Toda \cite{Toda} interpolating the moduli of stable maps and stable quotients (See \cite{MOP}). Ciocan-Fontanine and Kim in \cite{CFKwcf1, CFKwcf2} worked out the wall crossing of the invariants as $\epsilon$ varies from $\infty$ to $0^+$, where $\epsilon=\infty$ means $\epsilon$ being sufficiently large so that the $\epsilon$-stable quasi-maps gives the Gromov-Witten theory and $\epsilon=0^+$ means $\epsilon$ being positive and sufficiently close to zero.
On the LG side, a corresponding theory for $\epsilon<0$ and wall crossing of invariants as $\epsilon$ varies from $-\infty$ to $0^-$ was worked out by Fan, Jarvis and Ruan in \cite{FJR15} and Ross and Ruan in \cite{RossRuan} for $g=0$, where $\epsilon=-\infty$ means $\epsilon$ being sufficiently small so that the $\epsilon$-stable quasi-maps gives the FJRW theory and $\epsilon=0^-$ means $\epsilon$ being negative and sufficiently close to zero. However, it has been unclear how the $\epsilon=0^+$-stability on the CY side and the $\epsilon=0^-$-stability on the LG side are related from the A-model point of view.

As we will see below in Section \ref{sec1.7} (See \cite{ChangLi, CLL}), all these moduli stacks are open substacks of the stack of quadruples $(C,L,x,p)$ where $C$ is a prestable or twisted semistable curve, $L$ is a line bundle on $C$, $x\in H^0(C,L^{\oplus 5})$ and $p\in \Hom_C(L^5, \otw_C)$.
Note that $\otw_C$ (resp. $\olog_C$) allows simple poles only for \emph{orbifold (resp. smooth)} marked points and hence $\otw_C=\omega_C$ when there are no orbifold marked points. When all the marked points are orbifold points, $\otw_C$ is $\olog_C$ (resp. $\omega_{\mathrm{log}}$) in the notation of \cite{ChangLi} (resp. \cite{FJR15,RossRuan}).
All the invariants can be defined as integrals on the cosection-localized virtual cycles \cite{KiemLi}.

\medskip

The purpose of this paper is to show that there are further stability conditions interpolating the $\epsilon=0^+$-stability on the CY side and the $\epsilon=0^-$-stability on the LG side. We call the new stability the $\delta$-stabilities with $\delta>0$ (resp. $\delta<0$) for the CY side (resp. LG side). In this paper, we will show that
\begin{enumerate}
\item for given topological type $(g,m,d)$, there are only finitely many $\delta$-walls where the moduli space of $\delta$-stable quasi-maps changes;
\item for general $\delta>0$ (resp. $\delta<0$), the stack of $\delta$-stable quadruples $(C,L,x,p)$ with $x\ne 0$ (resp. $p\ne 0$) is a separated \DM stack equipped with a cosection-localized virtual cycle whose support is proper;
\item the $\delta=\pm\infty$-stability is very close to $\epsilon=0^\pm$-stability. The precise relationship $\delta=\infty$-stability and $\epsilon=0^+$-stability for genus zero will be clarified in Section \ref{sec:MOPinf};
\item a quadruple  $(C,L,x,p)$ is $\delta=0^+$-stable (resp. $\delta=0^-$-stable) if and only if $\bar L$ is (Gieseker-)stable over $\bar C$ with respect to the ample line bundle $\olog_{\bar C}$
where $\rho:C\to \bar C$ is the stabilization of $C$ (resp. the stabilization of the coarse moduli space of the twisted semistable curve $C$) and $\bar L=\rho_*L$ (resp. $\bar L=\rho_* (L^{-5}\otw_C)$); there are no constraints on $x\ne 0$ and $p$ (resp. $x$ and $p\ne 0$);
\item via the torus localization (See \cite{CLK}) for the cosection-localized virtual cycles, the $\delta=0^\pm$-invariant is given by the residue of an integral on the moduli stack $\bar P$ of pairs $(\bar C,\bar L)$ of stable curves $\bar C$ and stable sheaves $\bar L$ on $\bar C$ (See \cite{Simpson}), where the formulae on $\delta=0^-$ and $\delta=0^+$ are of the same form.
\end{enumerate}

To prove the full LG/CY correspondence, we need a wall crossing formula for cosection-localized virtual cycles, which gives a relation between the virtual invariants as $\delta$ varies. But in the present paper, we do not provide a wall crossing formula. We give a description of the moduli spaces and leave pursuing the derivation of a wall crossing formula as future work.

In Section \ref{sec:WCg0}, we show that the $\epsilon$- and $\delta$-wall crossings are all given by regular morphisms when $g=0$. If we further specialize to the case of $d=1$, we show that these morphisms are in fact blowups. As a byproduct, we obtain a new construction of Fulton-MacPherson configuration space of $\PP^1$ as a sequence of blowups from a projective bundle over the moduli space $\overline{\cM}_{0,m}$ of stable curves via the moduli spaces of $\epsilon$- and $\delta$-stable quadruples (Example \ref{exam:FM}).

\medskip

Perhaps the moral of this paper may be phrased as follows: Through magnifying glasses, we discovered that between $\epsilon=0^+$ and $\epsilon=0^-$, there is another line of wall crossings which we call the $\delta$-line, where the $\epsilon=0^\pm$-stabilities are very close to the $\delta=\pm\infty$-stabilities and the invariants for the $\delta=0^+$ and $\delta=0^-$-stabilities are given by the same residue formula (See Figure \ref{fig1}).

\begin{figure}[htb]
\setlength\intextsep{0pt}
\begin{center}
\includegraphics{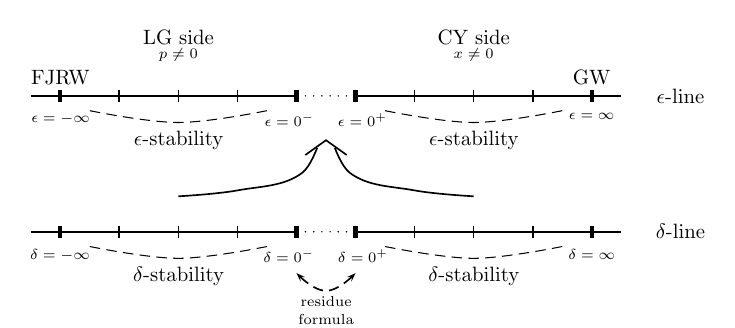}
\end{center}
\caption{$\epsilon$- and $\delta$-line of stability conditions}\label{fig1}
\end{figure}

\medskip
\subsection{GW and FJRW invariants by cosection localization}\label{s0.2}

Recall that the GW invariant enumerates stable maps $f:C\to Y_+\subset \PP^4$,
which amounts to giving a line bundle $L=f^*\cO_{\PP^4}(1)$ and a multi-section $x=(x_1,\dots,x_5)\in H^0(C,L)^{\oplus 5}$ such that $x:\cO_C^{\oplus 5}\to L$ is surjective. The map $f$ is a stable map to $Y_+$ if and only if $\sum_{i=1}^5x_i^5=0$ and $\olog_C\otimes L^3$ is ample.
On the other hand, the FJRW invariant enumerates twisted stable curves $C\in \mgn^{\tw}$ with spin structures (or $p$-fields) $p:L^5\mapright{\cong} \otw_C$ by Witten's top Chern class. 
When there are no orbifold marked points, the isomorphism $p$ can be thought of as a section in $H^0(L^{-5}\omega_C)$.

In \cite{ChangLi}, Chang and Li studied the GW theory of $Y_+$ by enlarging the moduli stack by including sections $p\in H^0(L^{-5}\omega_C)$ which are called \emph{$p$-fields}. They considered the stack $\fX_+^{\epsilon=\infty}$ of quadruples $(C,L,x, p)$ defined by
\[ \fX_+^{\epsilon=\infty}=\{
(C,L,x,p)\,|\, (C,L,x)\in \overline{M}_{g,m}(\PP^4,d) ,~ p\in H^0(L^{-5}\omega_C)
\}.\]
Here $\overline{M}_{g,m}(\PP^4,d)$ denotes the moduli stack of stable maps to $\PP^4$, i.e. $x:\cO_C^{\oplus 5}\to L$ is surjective and $\olog_C\otimes L^3$ is ample.
They proved that the moduli stack $\fX_+^{\epsilon=\infty}$ is equipped with a perfect obstruction theory whose obstruction sheaf $Ob_{\fX_+^{\epsilon=\infty}}$ admits a cosection
\begin{equation}\label{defcos} \sigma=\bar{\partial}\hat W:Ob_{\fX_+^{\epsilon=\infty}}\lra \cO_{\fX_+^{\epsilon=\infty}},\quad \bar{\partial}\hat W|_{(C,L,x,p)}(\dot{x},\dot{p})=\dot{p}\sum_{i=1}^5x_i^5+p\sum_{i=1}^55x_i^4\dot{x}_i,\end{equation}
where $\dot{x}=(\dot{x}_i)\in H^1(L^{\oplus 5})$ and $\dot{p}\in H^1(L^{-5}\omega_C)$.
The degeneracy locus $D(\sigma)=\sigma^{-1}(0)$ consists of stable maps to $Y_+$ so that $D(\sigma)$ is proper. The cosection localization of \cite{KiemLi} then defines a virtual cycle $[\fX_+^{\epsilon=\infty}]\vilo$ supported in $D(\sigma)$, which is proved to be equivalent to $[ \overline{M}_{g,m}(Y_+,d)]\virt$ up to sign. Hence $[\fX_+^{\epsilon=\infty}]\vilo$ gives the GW invariant of the Fermat quintic $Y_+$.

There is a similar story on the LG side. 
Let $\fX_-^{\epsilon=-\infty}$ be the moduli stack parametrizing quadruples $(C,L,x,p)$, where $C\in\mgn^{\tw}$ is a twisted stable curve (See \cite{AV}), $L$ is a line bundle with fixed nontrivial orbifold structures on orbifold marked points, $x\in H^0(L^{\oplus 5})$, and an isomorphism $p: L^5\cong \otw_C$. (See \cite{FJR,CLL} or Section \ref{sec:LG} for more detail.) The perfect obstruction theory and the cosection on $\fX_-^{\epsilon=-\infty}$ are defined similarly as on  $\fX_+^{\epsilon=\infty}$.
The degeneracy locus $D(\sigma)=\sigma^{-1}(0)$ in  $\fX_-^{\epsilon=-\infty}$  is precisely the locus where $x=0$, which is finite over $\mgn^{\tw}$.
Chang, Li and Li in \cite{CLL} proved that integrals on $[\fX_-^{\epsilon=-\infty}]\vilo$ gives the FJRW invariant of $Y_-$ up to sign when all marked points are orbifold points.

Note that both the GW and FJRW invariants are defined by moduli stacks of quadruples $(C,L,x,p)$ with $C$ either prestable or twisted stable, $L\in \mathrm{Pic}(C)$, $x\in H^0(C,L^{\oplus 5})$ and $p\in \Hom_C(L^{5},\otw_C)$, satisfying certain stability conditions. Let $\fX$ denote the stack of all quadruples $(C,L,x,p)$ (without any stability conditions) and let $\fX_+$ (resp. $\fX_-$) denote the open substack defined by $x\ne 0$ (resp. $p\ne 0$).
By the cone stack construction (See \cite{ChangLi}), $\fX$ (resp. $\fX_\pm$) is an algebraic stack. Observe that $\fX_\pm^{\epsilon=\pm\infty}$ are open substacks of $\fX_\pm$. We will see below that there are many stability conditions on $\fX_\pm$ that give rise to open \DM substacks equipped with localized virtual cycles. The moduli stacks for the CY side are all contained in $\fX_+$ with $C\in \mgn^{\ps}$ prestable while  those for the LG side lie in $\fX_-$ with $C\in \mgn^{\tw}$ twisted semistable.

\subsection{Comparison of GW and FJRW via wall crossing}\label{s0.3}

In this subsection, we review the known sequence of stability conditions on $\fX_\pm$ and introduce a new sequence, called the $\delta$-stabilities, on $\fX_\pm$. Both of $\epsilon$ and $\delta$ takes values from $\infty$ to $0^+$ for $\fX_+$ and from $-\infty$ to $0^-$ for $\fX_-$.

\subsubsection{$\epsilon$-wall crossing}\label{s0.3.1}

For $\epsilon>0$, a quadruple $(C,L,x,p)\in \fX_+$ for $C\in \mgn^{\ps}$, $L\in \mathrm{Pic}^d(C)$, $x\in H^0(L^{\oplus 5})=\Hom_C(\cO_C^{\oplus 5},L)$ and $p\in H^0(L^{-5}\omega_C)=\Hom_C(L^5,\omega_C)$ is called \emph{$\epsilon$-stable} if
\begin{enumerate}
\item $\olog_C\otimes L^\epsilon$ is ample;
\item the cokernel of $x:\cO^{\oplus 5}_C\to L$ has 0-dimensional support disjoint from special points (i.e. marked points or nodes);
\item $\epsilon\cdot\mathrm{length}_z(\mathrm{coker}(x))<1$ for all $z\in C$.
\end{enumerate}
Note that we do not require any condition on the $p$-field $p$.

Let $\fX_+^{\epsilon}$ denote the open substack in $\fX_+$ of $\epsilon$-stable quadruples. The definition of $\epsilon$-stability (without $p$) is due to Toda in \cite{Toda} generalizing the MOP stability in \cite{MOP}.
When $\epsilon$ is sufficiently large ($\epsilon=\infty$), no base points are allowed and an $\epsilon$-stable quadruple is nothing but a stable map to $\PP^4$ with $p$-field so that we get back to $\fX_+^{\epsilon=\infty}$ for the GW invariant in Section \ref{s0.2} as we should. Furthermore, the moduli stacks $\fX_+^\epsilon$ vary only at finitely many values of $\epsilon>0$, called the {\em walls}, if we fix the topological type $(g,m,d)$. For a non-wall $\epsilon>0$, $\fX_+^\epsilon$ are all equipped with cosection-localized virtual cycles $[\fX_+^\epsilon]\vilo$ whose supports are proper.

When $\epsilon=0^+$, the $\epsilon$-stability equals the MOP stability which reads as \begin{enumerate}
\item $\olog_C\otimes L^a$ is ample for all $a>0$;
\item the cokernel of $x:\cO^{\oplus 5}_C\to L$ has 0-dimensional support disjoint from special points.
\end{enumerate}
As we will see below, if we drop the phrase ``disjoint from special points'' and allow exceptional components (i.e. rational components $E$ with $\olog_C|_E\cong \cO_E$) in the support, we get the $\delta=\infty$-stability that we will introduce.

On the LG side, for $\epsilon<0$, a quadruple $(C,L,x,p)\in \fX_-$ for $C\in \mgn^{\tw}$, $L\in \mathrm{Pic}^d(C)$, $x\in H^0(L^{\oplus 5})$ and $p\in H^0(C,L^{-5}\otw_C)=\Hom_C(L^5,\otw_C)$ is called \emph{$\epsilon$-stable} if
\begin{enumerate}
\item $\otw_C\otimes \tilde{L}^{|\epsilon|}$ is ample where $\tilde{L}=L^{-5}\otw_C$;
\item the cokernel of $p:L^{5}\to \otw_C$ has 0-dimensional support disjoint from special points;
\item $|\epsilon|\cdot\mathrm{length}_z(\mathrm{coker}(p))<1$ for all $z\in C$.
\end{enumerate}
Note that we do not require any condition on the $x$-field $x$. Obviously, the condition (1) may be rephrased as the ampleness of  $(\otw_C)^{1-\epsilon}\otimes L^{5\epsilon}$.

Let $\fX_-^{\epsilon}$ denote the open substack in $\fX_-$ of $\epsilon$-stable quadruples. This definition of $\epsilon$-stability is due to Fan, Jarvis and Ruan in \cite{FJR15} and Ross and Ruan in \cite{RossRuan}. Ross and Ruan in \cite{RossRuan} further studied the wall crossing for $g=0$. When $\epsilon$ is sufficiently negative ($\epsilon=-\infty$), no base points for $p$ are allowed and an $\epsilon$-stable quadruple is nothing but a spin structure with $x$-field so that we get back to the moduli stack $\fX_-^{\epsilon=-\infty}$ for the FJRW invariant in Section \ref{s0.2}. As before, the moduli stacks $\fX_-^\epsilon$ vary only at finitely many values of $\epsilon<0$, called walls, upon fixing the topological type $(g,m,d)$. For a non-wall $\epsilon<0$, $\fX_-^\epsilon$ are all equipped with cosection-localized virtual cycles $[\fX_-^\epsilon]\vilo$ whose supports are proper.

When $\epsilon=0^-$, the $\epsilon$-stability reads as \begin{enumerate}
\item $\otw_C\otimes \tilde L^{a}$ is ample for all $a>0$ where $\tilde{L}=L^{-5}\otw_C$;
\item the cokernel of $p:L^5\to \otw_C$ has 0-dimensional support disjoint from special points.
\end{enumerate}
If we only drop the phrase ``disjoint from special points,'' we get the $\delta=-\infty$-stability that we will introduce in Section \ref{sec:LG}. Again obviously, the condition (1) may be rephrased as the ampleness of $\otw_C\otimes L^a$ for all $a<0$.

\subsubsection{$\delta$-stability}\label{s0.3.2}
What we propose in this paper is that there are sequences of wall crossings between $\epsilon=0^+$ and $\epsilon=0^-$ in both sides, which we call the $\delta$-wall crossing.

For $\delta>0$, we define the $\delta$-stabilities on $\fX_+$ as follows (See Definition \ref{def-delstab2} and Lemma \ref{lem-use}):
A quadruple $(C,L,x,p)\in \fX_+$ with $C\in \mgn^{\ps}$, $L\in \mathrm{Pic}^d(C)$, $x\ne 0\in H^0(L^{\oplus 5})$ and $p\in \Hom_C(L^5,\omega_C)$  is called \emph{$\delta$-stable} if
\begin{enumerate}
\item $\olog_C\otimes L^a$ is ample for all $a >0$;
\item $(\bar L,\bar x)$ is a $\delta$-stable pair on $\bar C$ in Le Potier's sense (See \cite{LePotier}) with respect to the ample line bundle $\omega_{\bar C}^{\mathrm{log}}$, where $\rho:C\to \bar C$ is the stabilization morphism, $\bar L=\rho_*L$ and $\bar x=\rho_*x$. See Definition \ref{def-delstab1} for the notion of $\delta$-stable pairs.
\end{enumerate}

Let $\fX_+^{\delta}$ denote the open substack of $\fX_+$ which consists of $\delta$-stable quadruples.
We prove in Section \ref{sec3} that when $d+\delta\ge g-1$, $\fX_+^\delta$ is a separated \DM stack over $\CC$ for any $\delta>0$ except for finitely many values (once we fix $g,m,d$), called the walls (See Theorem \ref{thm2-1}).
By \cite{ChangLi, CFK, KiemLi}, for a non-wall $\delta>0$, $\fX_+^\delta$ are all equipped with perfect obstruction theories and cosections $\sigma:Ob_{\fX_+^\delta}\to \cO_{\fX_+^\delta}$ of the obstruction sheaves as well as the cosection-localized virtual fundamental cycles $[\fX_+^\delta]\virt_{\mathrm{loc}}$.

We also show that if $d+\delta\ge 3(g-1)+m$, the degeneracy locus $D(\sigma)$ of the cosection $\sigma$ in $\fX_+^\delta$ is proper so that we obtain a sequence of invariants of the quintic 3-fold by integrating cohomology classes against the localized virtual cycle $[\fX_+^\delta]\virt_{\mathrm{loc}}$ (See Theorem \ref{thm:properdeg}).

When $\delta$ is sufficiently large, denoted by $\delta=\infty$, the $\delta$-stability reads as
\begin{enumerate}
\item $\olog_C\otimes L^a$ is ample for all $a>0$;
\item $C$ is quasi-stable and the degree of $L$ on a rational bridge is 1;
\item the cokernel of $\bar x:\cO_{\bar C}^{\oplus 5}\to \bar L$ has 0-dimensional support.
\end{enumerate}
Obviously this is very close to the $\epsilon=0^+$-stability above. The wall crossing from the $\epsilon=0^+$-stable triples $(C,L,x)$ to the $\delta=\infty$-stable triples (without cosection localization) is described in Section \ref{sec:MOPinf}.

For the GW invariants of $Y_+$, we may assume that $2g-2+m$ is coprime to $d-g+1$ where $d=\deg L$ because we can add a marked point and cancel its effect by the formula
\begin{equation}\label{edila} (2g-2+m)\cdot [\overline{M}_{g,m}(Y_+,d)]\virt = [\overline{M}_{g,m+1}(Y_+,d)]\virt\cap \psi_{m+1}. \end{equation}

Under the assumption that $2g-2+m$ is coprime to $d-g+1$, there is no strictly $\delta=0^+$-semistable quadruples and the $\delta=0^+$-stability of $(C,L,x,p)$ is equivalent to the stability of $(\bar C,\bar L)$. We have a proper \DM stack $\bar P=\bar{P}_{g,m,d}$ of pairs $(\bar C,\bar L)$ where $\bar C\in \mgnst$ is a stable curve and $\bar L$ is a (Gieseker-)stable sheaf on $\bar C$ with respect to the ample line bundle $\omega_{\bar C}^{\mathrm{log}}$ by the general construction of Simpson (See \cite{Simpson}). Moreover, by a result of Faltings \cite[Theorem 4.1]{Faltings}, $\bar P$ is smooth.
We will see (See the proof of Theorem \ref{thm:properdeg}) that when $(C,L,x,p)$ is $\delta=0^+$-stable and $d\ge 3(g-1)+m$, we have the vanishing $p=0$. Hence $\fX_+^{\delta=0^+}$ is in fact the stack of triples $(C,L,x)$ with $x\ne 0$ where $(\bar C,\bar L)\in \bar P$. It is easy to see that $(\bar C, \bar L)$ determines $(C,L)$ uniquely by inserting a rational bridge at a node where $\bar L$ is not locally free and hence $\fX_+^{\delta=0}$ is the projectivization of a cone stack over $\bar P$. Applying the torus localization formula in \cite{CLK}, we find that (See Proposition \ref{p6.2n})
\begin{equation}\label{eK1} [\fX_+^{\delta=0^+}]\vilo = \mathrm{res}_{t=0}\frac{[\bar P]}{e(R\pi_*(\cL^{\oplus 5}\oplus  \cH om(\cL^{5},\omega_{\cC/\bar P})))}\end{equation}
where $\pi:\cC\to \bar P$ denotes the universal curve and $\cL$ is the universal sheaf on $\cC$.
Here $e(\cdot)$ stands for the equivariant Euler class of the perfect complex.

There is analogous $\delta$-stability on the LG side (Section \ref{sec:LG}). We consider the stack $\fX_-$ of quadruples $(C,L,x,p)$ of twisted semistable curves $C\in \mgn^{\tw}$, line bundles $L$ on $C$, $x$-fields $x\in H^0(L^{\oplus 5})$ and nonzero $p$-fields $p\in \Hom_C(L^5, \otw_C)$.
Exactly in the same manner as $\fX_+$ in the CY side discussed above, we introduce the $\delta$-stabilities on $\fX_-$ and discuss the wall crossing for $\delta<0$. The perfect obstruction theories, cosections, localized virtual cycles and so on can all be constructed in the same way.
When $\delta=-\infty$, the stability is exactly the same as the $\epsilon=0^-$-stability without the phrase ``disjoint from special points.'' When $\delta=0^-$ and $d<-\frac15 (g-1+m)$, we will find that $x=0$. We may assume the numerical condition such that there are no strictly $\delta=0^-$-semistable quasi-maps as there is also dilaton equation for FJRW invariants \cite[Theorem 4.29]{FJR}. Then we have a forgetful map from $\fX_-^{\delta=0^-}$ to $\bar{P}$ and $[\fX_-^{\delta=0^-}]\vilo$ is given by
\begin{equation}\label{IntroeK2}
 [\fX_-^{\delta=0^-}]^{\rm vir}_{\rm loc} = r\cdot\mathrm{res}_{t=0}\frac{[\bar{P}_{g,m,\tilde d}]}{e(R\pi_*(\cL^{\oplus 5}\oplus  \cH om(\cL^{5},\omega_{\cC/\bar{P}})))},\end{equation}
where $r$ is the degree of the finite morphism sending $L$ to $\tilde L = L^{-5}\otw$. Note the symmetry in formulae \eqref{eK1} and \eqref{IntroeK2}.

The key point is that when $\delta=0^\pm$, the moduli stack $\fX_+^{\delta=0^+}$ and $\fX_-^{\delta=0^-}$ are projective bundle over a \DM stack of pairs of a stable curve and a line bundle on the curve, because $p=0$ for $\delta=0^+$ and $x=0$ for $\delta=0^-$. Therefore we can apply the torus localization formula.

It is our hope to shed insightful light on the LG/CY correspondence through the $\epsilon$ and $\delta$ stabilities.
In summary we will have the following diagram:
\[\xymatrix{
\fX_+^{\epsilon=\infty}\ar@{<.>}[d]_{\epsilon\text{-wall crossing}} & \fX_-^{\epsilon=-\infty}\ar@{<.>}[d]^{\epsilon\text{-wall crossing}} \\
\fX_+^{\epsilon=0^+}\ar@{<.>}[d]  & \fX^{\epsilon=0^-}_-\ar@{<.>}[d]  \\
\fX_+^{\delta=\infty}\ar@{<.>}[d]_{\delta\text{-wall crossing}}  & \fX_-^{\delta=-\infty}\ar@{<.>}[d]^{\delta\text{-wall crossing}}  \\
\fX_+^{\delta=0^+}\ar@{.>}[d]_{\text{forget }x} & \fX_-^{\delta=0^-}\ar@{.>}[d]^{\text{forget }p}\\
\bar{P} & \bar{P}
}\]
The cosection-localized virtual cycle for the top left gives the GW invariant while that for the top right gives the FJRW invariant. The wall crossing formulas for both sides will relate the GW invariant and the FJRW invariants with integrals on $\bar P=\bar{P}_{g,m,d}$. By combining them, one may deduce a correspondence between the two invariants.

In summary, as the $\epsilon$-wall crossings on both sides have been studied by many authors, the LG/CY correspondence may be achieved by working out the wall crossings for $\delta$-stable quasi-maps on each side. In this paper, we focus on developing the moduli theory for $\delta$-stability. To complete the full LG/CY correspondence, we will need to derive a wall crossing formula for the cosection-localized virtual cycles. We hope to address the issue of wall crossing formulas in future work.

\medskip

As an application, we also study how the moduli spaces change on the CY side in the special case of $g=0$ (prior to cosection localization). By the stability conditions it is easy to check that $p=0$ on the CY side when $g=0$. So the moduli space $\fX_+^{\epsilon/\delta}$ is the same as the moduli space of triples $(C,L,x)$ which is denoted by $\fQ_+^{\epsilon/\delta}$. Then we show that there are contraction morphisms (Theorem \ref{thm:g0})
\begin{equation}\label{eq:cont}
\fQ_{+}^{\epsilon=\infty} \lra \cdots \lra \modmop\lra  \fQ_{+}^{\delta=\infty} \lra \cdots \lra  \fQ_{+}^{\delta=0^+}.
\end{equation}
and that $\fQ_{+}^{\delta=0^+}$ is a projective bundle over the moduli space $\overline{\cM}_{0,m}$ provided that $d+1$ and $m-2$ are coprime.

When we further specialize to the case where $d=1$, these contraction morphisms are all given by blowups (Theorem \ref{thm:g0d1}). As a byproduct, we obtain the Poincar\'{e} polynomials for $\overline{M}_{0,m}(\PP^{n-1},1)$. When the target space is $\PP^1$, the blowup maps \eqref{eq:cont} give a new construction of the Fulton-MacPherson configuration space $\PP^1[m]$ from $\overline{\cM}_{0,m}$ (Example \ref{exam:FM}).

We remark that there is a master space approach for the LG/CY correspondence for all genera by Chang, Li, Li and Liu \cite{CLLL}. Chiodo and Ruan in \cite{ChiodoRuan} proved the LG/CY correspondence in genus zero. Also in genus zero, Lee, Priddis and Shoemaker \cite{LPS} establish a proof of LG/CY correspondence assuming the crepant transformation conjecture. In \cite{GuoRoss}, Guo and Ross verified the Landau-Ginzburg/Calabi-Yau correspondence in genus one. The correspondence for higher genera remains open.

\subsection{Outline of the paper} This paper is organized as follows. In Section \ref{sec1.5} and Section \ref{sec1.7}, we review the general theory of quasi-maps and define a cosection on the stack of quasi-maps with $p$-fields following \cite{ChangLi}. In Section \ref{sec2}, we review the theory of $\delta$-stable pairs by Le Potier. In Section \ref{sec3}, we construct the moduli stack of $\delta$-stable quasi-maps and the cosection-localized virtual cycle. In Section \ref{sec:LG}, we describe the parallel theory on the LG side. In Section \ref{sec:WCg0}, we study the wall crossing when genus is zero. In Section \ref{sec:residue}, we show that when $\delta=0^\pm$, the invariants on both sides are given by the same residue formula. Finally in Section \ref{sec:insertion}, we give the definition of the descendent invariants for cosection-localized class of the moduli space of quasi-maps.

\medskip

\noindent\textbf{Acknowledgement}: We thank Huai-Liang Chang, Emily Clader, Tyler Jarvis, Bumsig Kim, Jun Li and Yongbin Ruan for useful discussions.

\medskip

\section{Direct image cone and the stack of quasi-maps}\label{sec1.5}
In this section, we define the stack $\fQ_+=\fQ_+(g,m,d)$ of quasi-maps of degree $d>0$ to a projective space $\PP^{4}$ for $g, m\in \ZZ_{\ge 0}$ and recall the result of Ciocan-Fontanine and Kim \cite{CFK}, independently Chang and Li \cite{ChangLi} that it is an algebraic stack and that any open \DM substack of $\fQ_+$ is equipped with a perfect obstruction theory.

\begin{defi}\label{def1}
An $m$-pointed \emph{prestable} curve is a connected reduced curve $C$ which has at worst nodal singularities, together with $m$ distinct smooth marked points $q_1,\dots, q_m$ in $C$. An $m$-pointed prestable curve $(C,q_1,\dots, q_m)$ is \emph{stable} (resp. \emph{semistable}) if $\olog_C:=\omega_C(\sum_{i=1}^m q_i)$ is ample (resp. nef).
\end{defi}

When it is obvious, we will write $C$ instead of $(C,q_1,\dots,q_m)$ for an $m$-pointed prestable curve.

\begin{defi}
A \emph{quasi-map} to $\PP^{4}$ is a tuple $(C, L, x)$ where $C$ is an $m$-pointed prestable curve $C$, $L$ is an invertible sheaf on $C$ and $x:\cO_C^{\oplus 5}\to L$ is a nonzero homomorphism. The sum of degrees $\deg L|_{C_i}$ where $C=\cup C_i$ is the irreducible decomposition is called the \emph{degree} of the quasi-map $(C,L,x)$.
\end{defi}

\begin{rema}\label{rmkn}
Although we have defined quasi-maps only when the target is $\PP^4$, we can also define it for general $\PP^{n-1}$ by setting $x$ to be in $H^0(L)^{\oplus n}$. All results in this paper which do not involve a cosection hold for general $n$.
\end{rema}

A stable map to $\PP^{4}$ is a quasi-map.
\begin{lemm}\label{lem116.1} Let $(C, L,x)$ be a quasi-map to $\PP^{4}$. Suppose $x$ is surjective. Then the morphism $\phi_{x}:C\to \PP^{4}$ induced by $x$ is a stable map if and only if $\olog_C\otimes L^3$ is ample.
\end{lemm}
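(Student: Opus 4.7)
The plan is to reduce both sides of the equivalence---stability of $\phi_\alpha$ and ampleness of $\omega_C(\sum p_i)\otimes E^3$---to a numerical positivity check on each irreducible component of $C$.

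First I would invoke the elementary fact that, for a reduced projective nodal curve, a line bundle is ample if and only if its restriction to every irreducible component has strictly positive degree (the Nakai--Moishezon criterion in dimension one). Given a component $C_i\subset C$ of arithmetic genus $g_i$, let $n_i$ denote the number of special points of $(C,p_1,\dots,p_m)$ on $C_i$ (marked points plus branches of nodes of $C$ meeting $C_i$), and set $d_i=\deg(E|_{C_i})$. Adjunction for the dualizing sheaf of a nodal curve then yields
$$\deg\bl\omega_C(\textstyle\sum p_i)\otimes E^3\br|_{C_i}=(2g_i-2+n_i)+3d_i.$$

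Next I would recall the Kontsevich--Manin characterization of stability: a prestable map $\phi_\alpha\colon C\to\PP^{n-1}$ is a stable map if and only if every irreducible component $C_i$ contracted by $\phi_\alpha$ satisfies $2g_i-2+n_i>0$. Since $E\simeq\phi_\alpha^*\cO_{\PP^{n-1}}(1)$, the contracted components are precisely those with $d_i=0$, and on such components the formula above reduces exactly to the Kontsevich--Manin inequality. On a non-contracted component $\phi_\alpha|_{C_i}$ is a finite cover of a positive-dimensional image, so $d_i\ge 1$, and the formula gives $2g_i-2+n_i+3d_i\ge -2+3=1>0$ with no further assumption. Hence component-wise positivity of $\omega_C(\sum p_i)\otimes E^3$ matches stability of $\phi_\alpha$ exactly, and ampleness is equivalent to stability.

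The only subtleties are checking the Nakai criterion on a reducible nodal curve and verifying that the exponent $3$ on $E$ is chosen minimally so that the worst admissible non-contracted component (namely $g_i=0$, $n_i=0$, $d_i=1$) still gives $1>0$; any exponent smaller than $3$ would fail in this extremal case. I do not foresee any further obstacle.
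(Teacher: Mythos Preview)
Your argument is correct and is precisely the standard elementary verification one has in mind here. The paper does not actually supply a proof of this lemma---it states only that ``the proof of this lemma is an elementary exercise''---so there is nothing to compare against; your component-by-component degree computation via Nakai--Moishezon together with the Kontsevich--Manin stability criterion is exactly the intended exercise, and your remark on the minimality of the exponent $3$ is a nice bonus.
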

The proof of this lemma is an elementary exercise.

Let us define the stack of all quasi-maps to $\PP^{4}$ of degree $d$ over $m$-pointed prestable curves of genus $g$. First of all, we consider the stack $\mathfrak{M}_{g,m}^{\ps}$ of $m$-pointed prestable curves of genus $g$ whose sections over a scheme $S$ are flat proper morphisms $\cC\to S$ of finite type with $m$ disjoint smooth sections $q_1,\dots,q_m$ whose geometric fibers are $m$-pointed prestable curves of genus $g$. It is well known that $\mathfrak{M}_{g,m}^{\ps}$ is a smooth algebraic stack.

Next the stack $\fP_+:=\mathfrak{P}_+(g,m,d)$ of line bundles of degree $d$ over prestable curves is defined as a stack over the category of schemes over $\CC$ whose sections over a scheme $S$ are pairs of $(\cC\to S)\in\mathfrak{M}_{g,m}^{\ps}$ and $\cL\in \mathrm{Pic}(\cC)$ of relative degree $d$ with obvious pull-backs as arrows.
It is also well known that $\mathfrak{P}_+$ is also a smooth algebraic stack and there is a forgetful morphism
\[ \mathfrak{P}_+\lra \mathfrak{M}_{g,m}^{\ps}.\]

The stack $\fQ_+=\fQ_+(g,m,d)$ of quasi-maps is now defined as the stack whose sections over a scheme $S$ are triples $(\cC\to S, \cL, x_S)$ where $(\cC\to S,\cL)\in \mathfrak{P}_+(S)$ and $x_S:\cO^{\oplus 5}_{\cC}\to \cL$ is a homomorphism which is not trivial on any fiber of $\cC\to S$. Given $(\pi:\cC\to S,\cL)\in \mathfrak{P}_+(S)$, it was shown in \cite[Proposition 2.2]{ChangLi} that the groupoid of all sections $x$ is represented by the cone
\begin{equation}\label{econ}
C(\pi_*\cL^{\oplus 5}):=\mathrm{Spec}_S\left(\mathrm{Sym} R^1\pi_*[(\cL^\vee)^{\oplus 5}\otimes \omega_{\cC/S}]\right)
\end{equation}
where $\mathrm{Sym}\cF$ denotes the symmetric algebra of a coherent sheaf $\cF$.
This construction defines  a cone stack over $ \mathfrak{P}_+$ and $\fQ_+$ is obtained by deleting the vertex of the cone.
Therefore $\fQ_+$ is an algebraic stack and we have the forgetful morphism $\fQ_+\to \mathfrak{P}_+$.

\begin{theo}[{\cite{CFK,ChangLi}}]\label{theo-sec1.5.1}
The stack $\fQ_+$ is an algebraic stack. Any open \DM substack of $\fQ_+$ admits a perfect obstruction theory induced from \eqref{eq-pot}.
\end{theo}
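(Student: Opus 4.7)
The plan is to prove both assertions by assembling what is already established in the section. For algebraicity, I will verify that the forgetful morphism $\eta : \qmap \to \mathfrak{P}_{g,m,d}$ is representable by schemes. Fix a scheme $S$ and a pair $(\pi : \cC_S \to S,\, \cE_S) \in \mathfrak{P}_{g,m,d}(S)$. By the representability result quoted from \cite[Proposition 2.2]{ChangLi}, the total space of all homomorphisms $\alpha : \cO_{\cC_S}^{\oplus n} \to \cE_S$ is the affine $S$-scheme
$$\mathrm{Spec}_S\bl \mathrm{Sym}\, R^1\pi_*[(\cE_S^\vee)^{\oplus n}\otimes\omega_{\cC_S/S}]\br,$$
obtained by relative Serre duality. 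The defining nontriviality condition of $\qmap$ --- that $\alpha$ does not vanish identically on any geometric fiber of $\pi$ --- is open, since the locus of $s \in S$ where $\alpha_s \equiv 0$ is closed by upper semi-continuity of cohomology. Thus $\qmap \times_{\mathfrak{P}_{g,m,d}} S$ is an open subscheme of the cone above, so $\eta$ is representable by schemes, and $\qmap$ is algebraic because $\mathfrak{P}_{g,m,d}$ is.

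For the perfect obstruction theory on an open \DM substack $U\subset\qmap$, I will upgrade the relative POT $\phi : \mathbb{E}^\bullet = R\pi_*\cE_U^{\oplus n} \to \mathbb{L}^\vee_{U/\mathfrak{P}_{g,m,d}}$ of \eqref{eq-pot} into an absolute one using smoothness of $\mathfrak{P}_{g,m,d}$. The input is the distinguished triangle
$$\eta^*\mathbb{L}_{\mathfrak{P}_{g,m,d}} \lra \mathbb{L}_U \lra \mathbb{L}_{U/\mathfrak{P}_{g,m,d}} \xrightarrow{+1}.$$
After dualization, one defines an absolute obstruction complex $\mathbb{F}^\bullet$ as a cone over the composite $\eta^*\mathbb{L}^\vee_{\mathfrak{P}_{g,m,d}}[-1] \to \mathbb{L}^\vee_{U/\mathfrak{P}_{g,m,d}} \overset{\sim}{\leftarrow} \mathbb{E}^\bullet$; the octahedral axiom produces a natural map $\mathbb{F}^\bullet \to \mathbb{L}^\vee_U$, and a diagram chase on the long exact sequence transfers the Behrend--Fantechi POT axioms (isomorphism on $h^0$, surjection on $h^{-1}$) from $\phi$ to this absolute map. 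The amplitude is controlled by the smoothness input: $\mathbb{L}_{\mathfrak{P}_{g,m,d}}$ has amplitude $[-1,0]$, and since $\pi$ has relative dimension one $\mathbb{E}^\bullet$ has amplitude $[0,1]$, so $\mathbb{F}^\bullet$ has amplitude $[0,1]$ as required.

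The real technical content --- construction of the relative POT in \eqref{eq-pot} and representability of the space of multi-sections --- is already supplied by \cite{ChangLi, CFK}, so no new obstacle arises at the level of this statement. The only steps needing genuine verification are the openness of the nontriviality-on-fibers condition (a one-line upper-semicontinuity argument) and the lift from relative to absolute POT over a smooth base. The latter is standard, but one must be mildly attentive to the fact that $\mathfrak{P}_{g,m,d}$ is a \emph{stack} rather than a scheme, so that $\mathbb{L}_{\mathfrak{P}_{g,m,d}}$ genuinely sits in amplitude $[-1,0]$ rather than degree $0$; this enters the amplitude bookkeeping for $\mathbb{F}^\bullet$ but does not affect the argument in any essential way.
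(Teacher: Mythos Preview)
Your proposal is correct and follows precisely the approach the paper sketches in the paragraph preceding the theorem: the paper itself gives no proof here but simply attributes the result to \cite{CFK,ChangLi}, having already outlined the direct image cone representability and the relative perfect obstruction theory \eqref{eq-pot} over the smooth base $\mathfrak{P}_{g,m,d}$. Your write-up just makes explicit the two routine steps the paper leaves implicit---openness of the nontriviality-on-fibers condition, and the standard lift from relative to absolute POT via the cotangent triangle---so there is nothing substantively different to compare. (One cosmetic point: your arrow direction for $\phi$ is reversed relative to the paper's convention in \eqref{eq-pot}, but this is purely a matter of which side one dualizes.)
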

\begin{proof}
The proof is the same as the proof of \cite[Proposition 3.1]{ChangLi} (See also \cite[Section 5]{CFK}).
Let $\cL_{\fQ_+}$ denote the universal line bundle over the universal curve $\pi:\cC_{\fQ_+}\to\fQ_+$.
By applying \cite[Proposition 2.5]{ChangLi} to the evaluation morphism $\mathfrak{e}:\cC_{\fQ_+}\to\cL^{\oplus 5}_{\mathfrak{Q}_+} $ induced from the universal section $\mathfrak{u}\in \Gamma(\cC_{\fQ_+},\cL^{\oplus 5}_{\mathfrak{Q}_+})$, we have a perfect relative obstruction theory \begin{equation}\label{eq-pot}\phi:\mathbb{L}^\vee_{\fQ_+/\mathfrak{P}_+}\to R\pi_*\cL^{\oplus 5}_{\mathfrak{Q}_+}\end{equation}
for $\fQ_+ \to \mathfrak{P}_+$.
Since $\mathfrak{P}_+$ is a smooth algebraic stack, this gives us a perfect obstruction theory for $\fQ_+$.
\end{proof}

\begin{coro}\label{cor-sec1.5.2}
Let $\mathfrak{U}$ be an open substack of $\fQ_+$ such that \begin{enumerate}
\item $\mathfrak{U}$ is a \DM stack;
\item $\mathfrak{U}$ is separated;
\item $\mathfrak{U}$ is proper.
\end{enumerate}
Then $\mathfrak{U}$ is equipped with a perfect obstruction theory and hence the virtual fundamental class $[\mathfrak{U}]\virt$ of dimension
\[ (3g-3+m) + g+ [5(d-g+1)-1]= -(g-1)+m+ 5d. \]
\end{coro}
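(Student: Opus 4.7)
The plan is to combine Theorem \ref{theo-sec1.5.1} with the Behrend--Fantechi construction of virtual fundamental classes, followed by a straightforward rank count for the virtual dimension.

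First, since $\mathfrak{U}$ is open in $\qmap$, restricting the relative perfect obstruction theory \eqref{eq-pot} yields a relative perfect obstruction theory
$$\phi|_{\mathfrak{U}}:\mathbb{L}^\vee_{\mathfrak{U}/\mathfrak{P}_{g,m,d}}\lra R\pi_*\cE^{\oplus n}_{\mathfrak{U}}.$$
Because $\mathfrak{P}_{g,m,d}$ is a smooth algebraic stack, one can promote this relative POT to an absolute two-term perfect obstruction theory on $\mathfrak{U}$ via the standard mapping-cone construction pairing $\pi^*\mathbb{L}_{\mathfrak{P}_{g,m,d}}$ with the relative complex.

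Next, hypotheses (1)--(3) place $\mathfrak{U}$ in exactly the setting of Behrend--Fantechi: a proper, separated Deligne--Mumford stack equipped with a two-term perfect obstruction theory. I would then invoke that construction (or the equivalent Li--Tian construction) to produce the virtual fundamental class $[\mathfrak{U}]\virt$, whose dimension is the K-theoretic rank of the POT.

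Finally I would compute the virtual rank additively: $\dim\mathfrak{M}_{g,m}=3g-3+m$; the fiber of $\mathfrak{P}_{g,m,d}\to\mathfrak{M}_{g,m}$ over a curve $C$ is the Picard stack of degree-$d$ line bundles, of stack dimension $g-1$ (the $-1$ being the $\mathbb{G}_m$ stabilizer scaling $\cE$, so $\mathfrak{P}_{g,m,d}$ is a $\mathbb{G}_m$-gerbe over its rigidification); and the K-theoretic rank of $R\pi_*\cE^{\oplus n}$ is $\chi(\cE^{\oplus n})=n(d-g+1)$ by Riemann--Roch on a curve of genus $g$ with a line bundle of degree $d$. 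Summing,
$$(3g-3+m)+g+[n(d-g+1)-1]=(4-n)(g-1)+m+nd,$$
matches the stated virtual dimension.

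There is essentially no substantive obstacle. The only point requiring mild care is the bookkeeping for the $\mathbb{G}_m$ stabilizer on $\mathfrak{P}_{g,m,d}$, which is what accounts for the ``$-1$'' term in the displayed formula; otherwise everything reduces to direct citation of Theorem \ref{theo-sec1.5.1} and the standard virtual class machinery.
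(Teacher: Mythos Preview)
Your proposal is correct and matches the paper's (entirely implicit) argument: the paper states this as an immediate corollary of Theorem \ref{theo-sec1.5.1} with no proof beyond the displayed dimension formula, and what you have written is exactly the standard unpacking of that formula via $\dim\mathfrak{P}_{g,m,d}+\operatorname{rank}R\pi_*\cE^{\oplus n}$. Your attribution of the $-1$ to the $\mathbb{G}_m$-gerbe structure of $\mathfrak{P}_{g,m,d}$ is the right explanation; the paper's grouping $(3g-3+m)+g+[n(d-g+1)-1]$ simply places that same $-1$ in a different bracket.
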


One can check (2) and (3) using the valuative criterion (See Section \ref{sec:pf}). For (1), if we can write $\mathfrak{U}$ as the quotient $V/G$ of a scheme $V$ by a reductive group $G$, it suffices to show that the stabilizer groups are all finite and reduced by \cite[Corollary 2.2]{Edidin}.

\begin{exam} {(Stable maps)}
By Lemma \ref{lem116.1}, the moduli stack $\overline{M}_{g,m}(\PP^{4},d)$ of stable maps is an open substack of $\fQ_+$ which is a separated proper \DM stack \cite[Proposition 2.7]{ChangLi}. It was shown in \cite[Section 5.3]{CFK} and \cite[Lemma 2.8]{ChangLi} that the perfect obstruction theory of Behrend-Fantechi coincides with that obtained in Theorem \ref{theo-sec1.5.1}.
\end{exam}
\begin{exam}\label{ex-stquo} {(Stable quotients)}
If a stable quotient $\cO_C^{\oplus 5}\to Q$ has rank $4$ so that the kernel $S$ is invertible, the dual $\cO^{\oplus 5}_C\to S^{\vee}$ of the inclusion $S\hookrightarrow \cO^{\oplus 5}_C$ defines an object in $\fQ_+$. These objects form an open substack $\overline{Q}_{g,m}(\PP^{4},d)$ of $\fQ_+$ which is a proper separated \DM stack by Marian, Oprea and Pandharipande \cite{MOP}. See also \cite{Toda}.
\end{exam}
In Section \ref{sec3}, we will introduce the notion of $\delta$-stable quasi-maps and find that the substack $\fQ^\delta_+$ of $\delta$-stable quasi-maps for general $\delta$ is also an open substack which is a proper separated \DM stack.

\def\pgmd{{\mathfrak{P}_{g,m,d}} }


\section{GSW model for $\fQ_+$}\label{sec1.7}
In \cite{ChangLi}, Chang and Li further enlarged the moduli stack $\mathfrak{Q}_+(g,0,d)$ to include an additional section, called the \emph{$p$-field}. The cosection localization technique of \cite{KiemLi} then enables us to define localized invariants which are related to curve counting invariants on a quintic Calabi-Yau 3-fold $X$. The same construction also applies when there are marked points.

\begin{defi}
A \emph{$p$-field} of a quasi-map $(C,L,x)\in \fQf$ is a section $$p\in \Hom_C(L^5,\omega_C)= H^0(C,L^{-5}\otimes \omega_C).$$
The stack $\fX_+=\fX_+(g,m,d)$ of quasi-maps with $p$-fields is defined as the stack whose sections over a scheme $S$ are quadruples $(\cC\to S,\cL,x_S,p_S)$ where $(\cC\to S,\cL,x_S)\in \fQf(S)$ and $p_S\in H^0(\cC,\cL_S^{\otimes -5}\otimes \omega_{\cC/S})$. Arrows are defined by pull-backs.\end{defi}

By the direct image cone construction of \cite[Section 2]{ChangLi} again, we have the following.
\begin{prop}
$\fX_+$ is an algebraic stack equipped with a relative perfect obstruction theory
\[  \mathbb{L}^\vee_{\hqmapf/\mathfrak{P}_+}\lra R\pi_*(\cL_{\hqmapf}^{\oplus 5}\oplus [\cL^{ -5}_{\hqmapf}\otimes \omega_{\pi}])      \]
of $\fX_+\to \mathfrak{P}_+$ where $\pi:\cC_{\hqmapf}\to \hqmapf$ is the universal curve and $\cL_{\hqmapf}$ is the universal line bundle over $\cC_{\hqmapf}$.\end{prop}

The natural map
\[
\hat{W}:\cL^{\oplus 5}_{\hqmapf}\oplus (\cL^{-5}_{\hqmapf}\otimes \omega_{\pi}) \lra \omega_{\pi}
, \quad (x_i,p)\mapsto p\sum_ix_i^5\]
has derivative
\begin{equation}\label{eq-hpot} \sigma=\bar{\partial}\hat{W}:Ob_{\hqmapf/\mathfrak{P}_+}=R^1\pi_*\left(\cL_{\hqmapf}^{\oplus 5}\oplus [\cL^{ -5}_{\hqmapf}\otimes \omega_{\pi}]\right) \lra R^1\pi_*\omega_{\pi}\cong \cO_{\hqmapf}
\end{equation}
defined by $$(\dot{x}_i,\dot{p})\mapsto \dot{p}\sum x_i^5+p\sum 5x_i^4\dot{x}_i, \quad \text{for } (\dot{x}_i,\dot{p})\in H^1(C,L)^{\oplus 5}\oplus H^1(C,L^{-5}\otimes\omega_C)$$ at $(x_i,p)\in H^0(C,L)^{\oplus 5}\oplus H^0(C,L^{ -5}\otimes\omega_C)$. Moreover, Chang and Li show in \cite[Section 3.4]{ChangLi} that the map \eqref{eq-hpot} can be lifted to a cosection $\sigma:Ob_{\hqmapf}\lra \cO_{\hqmapf}$.

The locus where the cosection $\sigma$ in \eqref{eq-hpot} is not surjective is called the \emph{degeneracy locus} of $\sigma$ and denoted by $D(\sigma)$.

\begin{lemm}\label{lem-0118}
Suppose $\widehat{\mathfrak{U}}$ is an open substack of $\hqmapf$ which is a \DM stack. If the support of the multi-section $x:\cO_C^{\oplus 5}\to L$ contains the support of $p$ for all $(C, L, x,p)\in \widehat{\mathfrak{U}}$, then the degeneracy locus $D(\sigma)$ of $\sigma$ in $\widehat{\mathfrak{U}}$ is the closed substack of $\widehat{\mathfrak{U}}$ which consists precisely of $(C,L,x,p)\in \widehat{\mathfrak{U}}$ satisfying $p=0$,  $\sum_ix_i^5=0$ where $x=(x_1,\dots,x_5)$.
\end{lemm}
The proof is identical to that of Proposition 3.4 in \cite{ChangLi}. When $D(\sigma)$ is proper, we can apply the localized virtual cycle construction of \cite{KiemLi}.
\begin{coro}\label{cor-0118}
If $D(\sigma)$ in $\widehat{\mathfrak{U}}$ is proper, we have the localized virtual fundamental class $$[\widehat{\mathfrak{U}}]\virt_{\mathrm{loc}}\in A_m(D(\sigma)),$$
integrals against which define invariants for $Y_+$.
When $m=0$, the degree of $[\widehat{\mathfrak{U}}]\virt_{\mathrm{loc}}$ defines an \emph{invariant of quasi-maps with $p$-fields}.
\end{coro}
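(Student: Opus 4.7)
The plan is to invoke the cosection localization machinery of Kiem--Li \cite{KiemLi} essentially as a black box, since all the hypotheses have been laid out in the preceding proposition and lemma. First, note that $\widehat{\mathfrak{U}}$, being an open substack of $\widehat{\mathfrak{Q}}_{g,0,d,5}$, inherits the relative perfect obstruction theory over the smooth stack $\mathfrak{P}_{g,0,d}$, so it carries an absolute perfect obstruction theory and a virtual fundamental class. The cosection $\sigma$ of \eqref{eq-hpot} restricts to a cosection of the obstruction sheaf of $\widehat{\mathfrak{U}}$, and by Lemma~\ref{lem-0118} its degeneracy locus $D(\sigma)$ is the explicitly described closed substack; by hypothesis, $D(\sigma)$ is proper. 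Kiem--Li then produces a localized virtual class $[\widehat{\mathfrak{U}}]\virt_{\mathrm{loc}} \in A_\ast(D(\sigma))$ whose image in $A_\ast(\widehat{\mathfrak{U}})$ is $[\widehat{\mathfrak{U}}]\virt$.

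To see that the class lies in $A_0$ rather than some higher degree, I would do a Riemann--Roch dimension count. The smooth stack $\mathfrak{P}_{g,0,d}$ has dimension $(3g-3) + g = 4g-3$, and the virtual rank of $R\pi_\ast(\cE^{\oplus 5} \oplus \cE^{-5}\otimes \omega_\pi)$ equals
\[
5(d+1-g) + (-5d + g - 1) = 4-4g,
\]
so the virtual dimension of $\widehat{\mathfrak{Q}}_{g,0,d,5}$ is $(4g-3)+(4-4g) = 1$. Cosection localization decreases the dimension by the rank of the target line bundle $\cO_{\widehat{\mathfrak{U}}}$, namely by one, so the localized class is supported in dimension $0$. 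Taking its degree yields the announced invariant.

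The only thing one really needs to check carefully is that the cosection $\sigma$ constructed via the derivative of $(u_i,p)\mapsto p\sum_i u_i^5$ genuinely factors as a cosection on the obstruction sheaf of the Kiem--Li type, i.e.\ that the composition with the obstruction map from the cotangent complex vanishes; this is where the specific form of the perfect obstruction theory from the direct image cone is used, but it follows verbatim from the argument in \cite{ChangLi}. Beyond that, the only real input beyond Kiem--Li is the hypothesis that $D(\sigma)$ be proper, which is assumed. Hence the main (and essentially only) obstacle is ensuring that the hypotheses for \cite{KiemLi}, in particular the properness of $D(\sigma)$ and the lifting of $\sigma$ to a cosection of the obstruction sheaf, are in place; once they are, the corollary is an immediate specialization of the localized virtual cycle construction.
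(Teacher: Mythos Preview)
Your approach---invoke the cosection localization of \cite{KiemLi} once the relative perfect obstruction theory and the cosection $\sigma$ are in place---is exactly what the paper does; the paper in fact offers no proof beyond the sentence ``When $D(\sigma)$ is proper, we can apply the localized virtual cycle construction of \cite{KiemLi}.''

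However, your dimension count contains two compensating errors. First, cosection localization does \emph{not} lower the virtual dimension: in \cite{KiemLi} the localized class $[\widehat{\mathfrak{U}}]\virt_{\mathrm{loc}}$ lives in $A_{\mathrm{vdim}}(D(\sigma))$ for the same virtual dimension as the ordinary class; it is simply supported on the smaller locus. Second, your dimension for $\mathfrak{P}_{g,0,d}$ is off by one: as an Artin stack the Picard fiber has dimension $g-1$ (there is a $\mathbb{G}_m$ of automorphisms of any line bundle), so $\dim\mathfrak{P}_{g,0,d}=4g-4$, not $4g-3$. With this correction the virtual dimension of $\widehat{\mathfrak{U}}$ is
\[
(4g-4)+\bigl[5(d-g+1)+(-5d+g-1)\bigr]=(4g-4)+(4-4g)=0,
\]
and the localized class already sits in $A_0(D(\sigma))$ without any further shift. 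Your two errors cancel numerically, but the reasoning should be fixed: the $0$ comes from the virtual dimension of $\widehat{\mathfrak{U}}$ itself, not from a drop caused by the cosection.
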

For instance, if $\mathfrak{U}$ is an open substack of $\fQf$ which is a proper separated \DM stack and $\widehat{\mathfrak{U}}=\mathfrak{U}\times_{\fQf}\hqmapf$ satisfies the assumption of Lemma \ref{lem-0118}, then $D(\sigma)$ is a closed substack of $\mathfrak{U}$ since $p=0$ and hence $D(\sigma)$ is proper.

In \cite[Section 5]{ChangLi}, it was proved that when $\widehat{\mathfrak{U}}=\overline{{M}}_{g,0}(\PP^4,d)^p$ is the moduli stack of stable maps with $p$-fields, the localized invariant
\[ \deg [\overline{{M}}_{g,0}(\PP^4,d)^p]\virt_{\mathrm{loc}}\]
coincides with the ordinary GW invariant of a quintic 3-fold up to sign $(-1)^{5d-g+1}$.

\begin{rema}\label{rem:cfk}
The theory of quasi-maps to GIT quotients is developed in \cite{CKM}. When the target is $\PP^{n-1}$ (See Remark \ref{rmkn}), the quasi-map in \cite{CKM} is precisely the stable quotient in Example \ref{ex-stquo}. Upon restricting the target, one obtains the virtual cycle associated the stack of quasi-maps to a quintic 3-fold. We expect this also coincides with the cosection-localized virtual cycle \[[\overline{Q}_{g,m}(\PP^{n-1},d)^p]\virt_{\mathrm{loc}}\] up to sign. When the genus is zero, since there are no nonzero $p$-fields, the cosection-localized cycle is given by the Euler class of the obstruction sheaf coming from the $p$-field. Hence, by \cite[Proposition 6.2.2]{CKM} the two cycles coincide up to sign, where the sign change is due to taking the dual of tangent-obstruction complex.
\end{rema}
\begin{rema}\label{rema-0216}
We will define and show in Section  \ref{sec3} that the moduli stack $$\hqmapf^\delta:=\fQf^\delta\times_{\fQf}\hqmapf$$ of $\delta$-stable quasi-maps with $p$-fields has proper degeneracy locus $D(\sigma)$ if $d+\delta\ge 3(g-1)+m>0$. Therefore we obtain the GW-type invariant of $\delta$-stable quasi-maps to $\PP^4$ with $p$-fields by integrating against $ [\hqmapf^\delta]\virt_{\mathrm{loc}}.$
\end{rema}


\section{Moduli of stable pairs with multi-sections over a nodal curve}\label{sec2}
In this section, we review the notion of $\delta$-stability for a pair $(E,\alpha)$ of a coherent sheaf $E$ and a multi-section $\alpha:\cO_C^{\oplus n}\to E$ over a polarized nodal curve. We construct a projective moduli scheme of $\delta$-semistable pairs $(E,\alpha)$ over $C$ via geometric invariant theory. This type of construction is standard by now thanks to \cite{Huybrechts} and we will closely follow \cite{HL2}.

Let $C$ be a fixed $m$-pointed prestable curve of genus $g$. Let us fix an ample line bundle $\cO(1)$ on $C$. For a polynomial $P(t)\in \QQ[t]$ of degree $1$, we write either $P(t)=rt+\chi$ or $P(t)=r(t+\mu)$ in what follows. Let $\delta>0$ and $n\in \ZZ_{>0}$.

\begin{defi}
Let $E$ be a coherent sheaf on $C$ with Hilbert polynomial $P$ and $\alpha:\cO_C^{\oplus n}\to E$ be a homomorphism which we call a \emph{multi-section}.\begin{enumerate}
\item For a subsheaf $E'$ of $E$, let $\theta(E',\alpha)=1$ if $\alpha$ factors through $E'$ and $0$ if not. We write the Hilbert polynomial of $E'$ as $$P_{E'}(t)=\chi(E'(t))=r(E')t+\chi(E')=r(E')\left( t+\mu(E')\right)$$ where $E'(t)=E'\otimes \cO(1)^t$ and $\chi$ denotes the Euler characteristic. Let
\[ P_{E',\alpha}^\delta(t)=P_{E'}(t)+\theta(E',\alpha)\delta. \]
When $r(E')\ne 0$, we define the \emph{reduced Hilbert polynomial} of $(E',\alpha)$ as
\[ p_{E',\alpha}^\delta(t)=P_{E',\alpha}^\delta(t)/r(E')=\frac{P_{E'}(t)+\theta(E',\alpha)\delta}{r(E')}=t+\mu(E')+\theta(E',\alpha)\frac{\delta}{r(E')}. \]
\item For a quotient $q:E\to E''=E/E'$, let $\theta(E'',\alpha)=0$ if $q\circ\alpha= 0$ and $1$ if otherwise. We write $$P_{E''}(t)=\chi(E''(t))=r(E'')t+\chi(E'')=r(E'')\left( t+\mu(E'')\right),$$
\[ P_{E'',\alpha}^\delta(t)=P_{E''}(t)+\theta(E'',\alpha)\delta. \]
When $r(E'')\ne 0$, we define the \emph{reduced Hilbert polynomial} of $(E'',\alpha)$ as
\[ p_{E'',\alpha}^\delta(t)=P_{E'',\alpha}^\delta(t)/r(E'')=\frac{P_{E''}(t)+\theta(E'',\alpha)\delta}{r(E'')}=t+\mu(E'')+\theta(E'',\alpha)\frac{\delta}{r(E'')}. \]
\end{enumerate}
\end{defi}

\begin{defi}\begin{enumerate}
\item A $F$ on a scheme is called \emph{pure} if the support of any nonzero subsheaf of $F$ is of the same dimension as the support of $F$.
\item We say a subsheaf $E'$ of $E$ is \emph{saturated} if $E''=E/E'$ is a pure sheaf.
\end{enumerate}
\end{defi}

\begin{defi}\label{def-delstab1}
A pair of a one-dimensional sheaf $E$ on $C$ and a nonzero  homomorphism $\alpha:\cO_C^{\oplus n}\to E$ is \emph{$\delta$-semistable} if $E$ is pure and for any nontrivial subsheaf $E'\ne E$, \[ r(E) P^\delta_{E',\alpha} (t) \le r(E')P^\delta_{E,\alpha}(t) \]
i.e. $E$ is pure and $$\frac{\chi(E')+\theta(E',\alpha)\delta}{r(E')}\le \frac{\chi(E)+\delta}{r(E)}.$$
We get \emph{$\delta$-stability} if $\le$ is replaced by $<$.
\end{defi}
\begin{rema}
By definition, it is clear that $$P_{E',\alpha}^\delta+P_{E'',\alpha}^\delta=P_{E,\alpha}^\delta,\qquad r(E')+r(E'')=r(E).$$ Hence $\alpha:\cO_{C}^{\oplus n}\to E$ is $\delta$-semistable if and only if for any one-dimensional quotient $E''$ of $E$, \[ r(E) P^\delta_{E'',\alpha} (t) \ge r(E'')P^\delta_{E,\alpha}(t). \]
\end{rema}

\begin{defi}
A \emph{homomorphism} $\varphi:(E,\alpha)\to (E',\alpha')$ of pairs on $C$ is a homomorphism $\varphi:E\to E'$ of $\cO_C$-modules such that $\varphi\circ\alpha=\alpha'$.
An isomorphism $\varphi:E\to E'$ of $\cO_C$-modules which satisfies $\varphi\circ\alpha=\alpha'$ is called an \emph{isomorphism} of pairs.
\end{defi}

\begin{lemm} \label{lem-iso1}
Let $(E,\alpha)$ and $(E',\alpha')$ be two $\delta$-stable pairs of dimension $1$ on $C$ with the same reduced Hilbert polynomial $p^\delta_{E,\alpha}=p^\delta_{E',\alpha'}$. Then any nonzero homomorphism $\varphi:(E,\alpha)\to (E',\alpha')$ is an isomorphism.
\end{lemm}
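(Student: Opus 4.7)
The plan is to adapt the standard argument for stable sheaves (compare \cite{HL2}) to the $\delta$-stable pair setting of Definition~\ref{def-delstab1}. Given a nonzero $\varphi\colon(E,\alpha)\to(E',\alpha')$ with $\varphi\circ\alpha=\lambda\alpha'$ for some $\lambda\in\CC$, set $K=\ker\varphi\subsetneq E$ and $I=\image\varphi\subseteq E'$, fitting into the exact sequence $0\to K\to E\to I\to 0$. Purity of $E$ and $E'$ ensures that $K$ and $I$ are each either zero or pure of dimension one, so that $r(K)+r(I)=r(E)$ and $\chi(K)+\chi(I)=\chi(E)$. The hypothesis $p^\delta_{E,\alpha}=p^\delta_{E',\alpha'}$ produces a common constant $c$ satisfying $\chi(E)+\delta=c\,r(E)$ and $\chi(E')+\delta=c\,r(E')$, against which all subsheaf bounds will be weighed.

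\emph{Case $\lambda\ne 0$.} Here $\alpha'$ factors through $I$, giving $\varepsilon(I,\alpha')=1$, while $\alpha$ does not factor through $K$ (else $\varphi\circ\alpha=0$), giving $\varepsilon(K,\alpha)=0$. If $K\ne 0$, $\delta$-stability of $(E,\alpha)$ on the proper nonzero subsheaf $K$ yields strictly $\chi(K)<c\,r(K)$, while $\delta$-stability of $(E',\alpha')$ on $I$ yields $\chi(I)+\delta\le c\,r(I)$. Adding and using additivity of $r$ and $\chi$ collapses to the absurd $\chi(E)+\delta<\chi(E)+\delta$, so $K=0$; the second inequality is then an equality, forcing $I=E'$, and $\varphi$ is an isomorphism.

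\emph{Case $\lambda=0$.} Now $\alpha$ factors through $K$, which is nonzero since $\alpha\ne 0$, giving $\varepsilon(K,\alpha)=1$; also $K\ne E$ since $\varphi\ne 0$, so $\delta$-stability yields the strict bound $\chi(K)+\delta<c\,r(K)$. On the image side, $\delta$-stability on $I\subseteq E'$ gives $\chi(I)+\varepsilon(I,\alpha')\delta\le c\,r(I)$, strict unless $I=E'$ (in which case $\varepsilon(I,\alpha')=1$ automatically and equality holds). Adding the two bounds produces a strict inequality $\chi(E)+(1+\varepsilon(I,\alpha'))\delta<\chi(E)+\delta$, i.e.\ $\varepsilon(I,\alpha')\delta<0$, which is impossible since $\delta>0$ and $\varepsilon(I,\alpha')\in\{0,1\}$. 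Hence no nonzero $\varphi$ with $\lambda=0$ can exist, and combined with the previous case every nonzero pair homomorphism is an isomorphism. For the endomorphism statement, if $\varphi\in\Hom((E,\alpha),(E,\alpha))$ satisfies $\varphi\circ\alpha=\lambda\alpha$, then $\psi=\varphi-\lambda\cdot\mathrm{id}_E$ is itself a pair homomorphism with scalar $0$; applying the Case $\lambda=0$ analysis to $\psi$ forces $\psi=0$, so $\varphi=\lambda\cdot\mathrm{id}_E$.

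The main obstacle is the careful bookkeeping of the indicators $\varepsilon(K,\alpha)$ and $\varepsilon(I,\alpha')$ together with the distinction between strict and weak inequalities: in every subcase one must verify that the relevant subsheaf is both proper and nonzero so that $\delta$-stability actually delivers a strict bound, and that the sign of $\lambda$ correctly dictates which of the two $\varepsilon$'s equals $0$ or $1$. Once this is set up, the numerical contradiction follows at once from the additivity identities $r(K)+r(I)=r(E)$ and $\chi(K)+\chi(I)=\chi(E)$.
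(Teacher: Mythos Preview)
Your argument for the main isomorphism claim is correct and follows essentially the same route as the paper: both proofs split on whether the scalar $\lambda$ vanishes (equivalently, whether $\alpha$ maps to zero in the image) and then play the $\delta$-stability inequalities for the kernel and image against each other. The paper organizes this as ``first surjective, then injective'' using reduced Hilbert polynomials directly, while you add the two numerical inequalities via $r(K)+r(I)=r(E)$ and $\chi(K)+\chi(I)=\chi(E)$; these are cosmetic rearrangements of the same computation.

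Where you genuinely diverge is the ``in particular'' clause. The paper picks a point $x$ where $E$ is locally free, extracts an eigenvalue $a$ of $\varphi_x$, and observes that $\varphi-a\cdot\mathrm{id}_E$ is a non-isomorphism hence zero. Your approach instead notes that $\psi=\varphi-\lambda\cdot\mathrm{id}_E$ is a pair endomorphism with scalar $0$ and invokes your Case $\lambda=0$, which already showed no such nonzero map exists. This is cleaner: it avoids any pointwise or eigenvalue reasoning and uses only what was just proved. The paper's version, on the other hand, does not rely on knowing the particular scalar $\lambda$ in advance and would work even if the definition of pair morphism did not single one out.
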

\begin{proof}
Let $E_1$ denote the image of $\varphi:E\to E'$. We have $\varphi\circ \alpha=\alpha'$.
Let $q:E\to E_1$ be the surjective homomorphism induced from $\varphi$. Suppose $0\ne E_1\ne E'$. If $q\circ\alpha=0$, we have $\theta(E_1,\alpha)=0$ and
\[ p^\delta_{E,\alpha}(t)\le t+ \frac{\chi(E_1)}{r(E_1)}\le t+\frac{\chi(E_1)+\theta(E_1,\alpha')\delta}{r(E_1)}=p^\delta_{E_1,\alpha'}(t) <p^\delta_{E',\alpha'}(t) \]
which is a contradiction. If $q\circ\alpha\ne 0$, then
$\varphi\circ\alpha=\alpha'\ne 0$. Therefore $\theta(E_1,\alpha)=\theta(E_1,\alpha')=1$ and thus
\[ p^\delta_{E,\alpha}\le p^\delta_{E_1,\alpha}=p^\delta_{E_1,\alpha'}<p^\delta_{E',\alpha'}\]
which is also a contradiction. Hence for any nonzero $\varphi$, $\varphi$ is surjective. If $\mathrm{ker}\varphi\ne 0$,
\[
\frac{\chi(E)+\delta}{r(E)}<\frac{\chi(E')+\theta(E',\alpha)\delta}{r(E')}\le \frac{\chi(E')+\delta}{r(E')}
\]
and thus $p^\delta_{E,\alpha}<p^\delta_{E',\alpha'}$ which is a contradiction. Therefore, $\varphi$ is an isomorphism.
\end{proof}

\begin{rema}
(1) For a quotient $q:E\to E''$ of $E$, let $\alpha''=q\circ\alpha$. For a subsheaf $E'$ of $E$, let $\alpha':\cO_C^{\oplus n}\to E'$ be the homomorphism induced by $\alpha$ if $\alpha$ factors through $E'$ and let $\alpha'=0$ if not. We call $\alpha'=:\alpha_{E'}$, $\alpha''=:\alpha_{E''}$ the induced multi-sections of $E'$ and $E''$ respectively.

(2) Let $F\subset G\subset E$ be coherent sheaves and $\alpha:\cO_C^{\oplus n}\to E$ be a multi-section of $E$. Then the induced multi-section of $G/F$ as a quotient of $G$ is the same as the induced multi-section of $G/F$ as a submodule of $E/F$ because both are the compositions of $\alpha$ with the obvious projections.\end{rema}

\begin{prop}
Let $(E,\alpha)$ be a $\delta$-semistable one-dimensional pair on $C$ for some $\delta>0$. There exists a filtration \[ 0=E_0\subset E_1\subset E_2\subset \dots \subset E_s=E\]
called a \emph{Jordan-H\"older filtration} such that the factors $\mathrm{gr}_i(E)=E_i/E_{i-1}$ with the induced multi-section $\alpha_{E_i/E_{i-1}}$ are $\delta$-stable with the reduced Hilbert polynomial $p^\delta_{E,\alpha}$. Furthermore, the pair of \[ \mathrm{gr}(E)=\bigoplus_i \mathrm{gr}_i(E) \and \mathrm{gr}(\alpha)=\sum_i \alpha_{E_i/E_{i-1}} \]
is independent of the choice of the Jordan-H\"older filtration, up to isomorphism.
\end{prop}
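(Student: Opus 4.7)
The plan is to adapt the Jordan--H\"older construction of Huybrechts--Lehn to this setting of one-dimensional pairs with multi-sections. The first task is an \emph{inheritance principle}: if $(E,\alpha)$ is $\delta$-semistable and $F\subsetneq E$ is a nonzero subsheaf satisfying $p^\delta_{F,\alpha_F}(t)=p^\delta_{E,\alpha}(t)$, then the quotient pair $(E/F,\alpha_{E/F})$ is again $\delta$-semistable with the same reduced Hilbert polynomial. This reduces to the additivity identities $P^\delta_{F,\alpha}+P^\delta_{E/F,\alpha}=P^\delta_{E,\alpha}$ and $r(F)+r(E/F)=r(E)$, together with the compatibility of induced multi-sections on successive subquotients recorded in the remark just preceding the proposition: any destabilizing subsheaf of $E/F$ pulls back to a subsheaf of $E$ violating $\delta$-semistability.

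For existence of the filtration, I would induct on the rank $r(E)$. If $(E,\alpha)$ is itself $\delta$-stable there is nothing to prove. Otherwise, among all proper nonzero subsheaves $F\subsetneq E$ with $p^\delta_{F,\alpha_F}=p^\delta_{E,\alpha}$ (which exist by $\delta$-semistability failing strictness somewhere) I pick one of minimal rank and call it $E_1$. Any proper subsheaf of $E_1$ with equal reduced Hilbert polynomial would also realize equality with $(E,\alpha)$ and have strictly smaller rank, contradicting the minimality of $r(E_1)$; hence $(E_1,\alpha_{E_1})$ is $\delta$-stable. By the inheritance principle, $(E/E_1,\alpha_{E/E_1})$ is $\delta$-semistable with the same reduced Hilbert polynomial, so the induction hypothesis provides a Jordan--H\"older filtration of $(E/E_1,\alpha_{E/E_1})$ whose preimage under the projection $E\twoheadrightarrow E/E_1$ extends $E_1$ to the desired filtration of $(E,\alpha)$.

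The uniqueness of $\mathrm{gr}(E)$ and $\mathrm{gr}(\alpha)$ up to isomorphism is the classical Jordan--H\"older argument transplanted to pairs. Given two such filtrations $\{E_i\}_{i=0}^s$ and $\{F_j\}_{j=0}^{s'}$, the composite $F_j\hookrightarrow E\twoheadrightarrow \mathrm{gr}_i(E)$ descends, for the smallest $i$ with $F_j\subset E_i$, to a nonzero homomorphism of $\delta$-stable pairs $(\mathrm{gr}_j(F),\alpha_{\mathrm{gr}_j(F)})\to (\mathrm{gr}_i(E),\alpha_{\mathrm{gr}_i(E)})$ of equal reduced Hilbert polynomial, which by Lemma \ref{lem-iso1} is an isomorphism. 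A standard induction on $s$ then produces a permutation of factors realizing the isomorphism $\mathrm{gr}(E)\cong \mathrm{gr}'(E)$ together with a matching of induced multi-sections. The main obstacle is the combinatorial bookkeeping around the discrete jumps of $\varepsilon(\cdot,\alpha)\in\{0,1\}$ under passage to subquotients; the compatibility statement in the remark preceding the proposition is the essential tool that keeps induced multi-sections consistent so that $\delta$-(semi)stability and the values of $\varepsilon$ propagate correctly through each step of both the existence induction and the uniqueness comparison.
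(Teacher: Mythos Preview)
Your argument is correct and essentially standard; the only substantive difference from the paper's proof is the direction in which you build the filtration. The paper chooses a proper subsheaf $E'\subsetneq E$ with $p^\delta_{E',\alpha}=p^\delta_{E,\alpha}$ of \emph{maximal} rank, observes that the quotient $(E/E',\alpha_{E/E'})$ is then $\delta$-stable, and inducts on the subpair $(E',\alpha_{E'})$; you instead choose one of \emph{minimal} rank, so that the subpair $(E_1,\alpha_{E_1})$ is $\delta$-stable, and induct on the quotient. These are dual procedures and each requires one nontrivial verification: the paper must check that the quotient by a maximal equal-slope subsheaf is stable, while you must check the inheritance principle that the quotient by any equal-slope subsheaf remains $\delta$-semistable. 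The paper's route is marginally shorter because semistability of $E'$ (needed for the induction) is immediate from the fact that subsheaves of $E'$ are subsheaves of $E$, whereas your route has to pass the $\varepsilon$-bookkeeping through a quotient; but you handle this correctly via the compatibility remark. For uniqueness both arguments reduce to Lemma~\ref{lem-iso1} in the same way, and your more explicit matching of graded pieces is a faithful unpacking of what the paper leaves implicit.
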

\begin{proof} If $(E,\alpha)$ is $\delta$-stable, we are done. Suppose not.
Let $E'\ne E$ be a submodule of $E$ with $p^\delta_{E',\alpha}=p^\delta_{E,\alpha}$ and $r(E')<r(E)\in \ZZ$ maximal. Then $E/E'$ with the induced multi-section is $\delta$-stable. Thus by induction we obtain a Jordan-H\"older filtration. The uniqueness follows from Lemma \ref{lem-iso1}.
\end{proof}

\begin{defi}
Two $\delta$-semistable pairs $(E,\alpha)$ and $(E',\alpha')$ with $p^\delta_{E,\alpha}=p^\delta_{E',\alpha'}$ are called \emph{S-equivalent} if $(\mathrm{gr}(E),\mathrm{gr}(\alpha))\cong (\mathrm{gr}(E'),\mathrm{gr}(\alpha'))$.
\end{defi}

\def\bF{\mathbf{F} }

\begin{theo} Let $d>0$ and $C$ be a prestable curve with an ample line bundle $\cO(1)$. Let $P(t)=rt+\chi\in \ZZ[t]$. There is a projective scheme $\bF_{C}(P)^\delta$ which is a coarse moduli space for the functor which associates to a scheme $T$ the set of isomorphism classes of $T$-flat families of $\delta$-semistable pairs $(E,\alpha)$ on $C$ whose underlying sheaves have Hilbert polynomial $P$. Moreover, there is an open subscheme which represents the functor of families of $\delta$-stable pairs. A closed point in $\bF_C(P)^\delta$ represents an S-equivalence class of $\delta$-semistable pairs.\end{theo}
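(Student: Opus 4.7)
The strategy is to follow the Huybrechts--Lehn GIT construction for moduli of pairs (\cite{HL2}), adapted to the case of multi-sections $\alpha:\cO_C^{\oplus n}\to E$. First I would establish boundedness: the $\delta$-semistability inequality yields a uniform upper bound on slopes $\mu(E')$ of subsheaves of a $\delta$-semistable $E$ with $P_E=P$, which combined with purity places the family of underlying sheaves into a bounded family by the standard Grothendieck--Kleiman argument. Choose $m\gg 0$ so that for every $\delta$-semistable pair $(E,\alpha)$ with $P_E=P$ the sheaf $E(m)$ is globally generated with $H^1(C,E(m))=0$ and $h^0(C,E(m))=P(m)$, and set $V=\CC^{P(m)}$.

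Next, form the Quot scheme $Q=\mathrm{Quot}_C(V\otimes\cO_C(-m),P)$, which is projective and carries a universal quotient $q:V\otimes\cO_{C\times Q}(-m)\to\cE$. Over $Q$ I build a parameter space $\bR$ for pairs $(q,\alpha)$ by adjoining a multi-section: after tensoring by $\cO(m)$, the sheaf $\pi_*\cE(m)^{\oplus n}$ on $Q$ becomes locally free for large $m$, and the associated linear scheme parametrizes multi-sections $\alpha:\cO_C^{\oplus n}\to E$ above each $q$. Restrict to the open locus $\bR^\circ\subset\bR$ on which the frame $V\cong H^0(E(m))$ is an isomorphism. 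The reductive group $G=GL(V)$ acts on $\bR^\circ$ by change of frame, and two points lie in the same $G$-orbit iff the underlying pairs are isomorphic.

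The heart of the construction is the choice of a $G$-linearized ample line bundle on $\bR^\circ$ whose GIT stability matches $\delta$-stability. Via Grothendieck's embedding, for $\ell\gg m$ the Quot scheme embeds into the Grassmannian $\mathrm{Gr}(V\otimes H^0(\cO_C(\ell-m)),P(\ell))$, pulling back a Pl\"ucker-type ample bundle $\cL_1$. Projectivizing the $\alpha$-coordinate produces a second $G$-linearized line bundle $\cL_2$, and I would work with $\cL_{a,b}=\cL_1^{\otimes a}\otimes\cL_2^{\otimes b}$, the ratio $b/a$ tuned so that the numerical parameter reflects $\delta$. Testing with a one-parameter subgroup $\lambda\subset G$, the weight decomposition $V=\bigoplus V_i$ induces a weighted filtration of $E$; the Hilbert--Mumford weight of $\cL_1$ against $\lambda$ is a combination of terms of the form $P_{E_i}(\ell)-(r(E_i)/r(E))P(\ell)$, while the $\cL_2$-weight detects whether $\alpha$ factors through a given step of the filtration, producing exactly the $\varepsilon(E',\alpha)\delta$-correction of Definition~\ref{def-delstab1}. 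Matching the two sides of the numerical inequality then follows as in \cite{HL2}, with a single section replaced by $n$ sections without substantive change, since $\varepsilon(E',\alpha)$ depends only on whether $\alpha$ factors through $E'$, not on the number of component sections.

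By Mumford's theorem $\bF_C(P)^\delta:=\bR^\circ/\!\!/G$ (with the linearization $\cL_{a,b}$) is then projective; the stable locus gives a geometric quotient representing the $\delta$-stable functor, and closed points of the semistable quotient correspond to polystable orbits, hence to $S$-equivalence classes in the sense defined earlier. The main obstacle is the Hilbert--Mumford matching described above: one must check carefully that the chosen numerical parameters in $\cL_{a,b}$ reproduce the asymmetric inequality of Definition~\ref{def-delstab1}, in which the extra $\delta$ sits only on the side of any subsheaf through which $\alpha$ factors. The remaining steps (boundedness, Quot parameter space, formation of the GIT quotient) are by now standard given \cite{HL2}.
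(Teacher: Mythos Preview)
Your strategy is essentially the paper's: both follow the Huybrechts--Lehn GIT template, obtaining boundedness from the uniform slope bound $\mu(E')\le(\chi+\delta)/r$, parametrizing pairs over a Quot scheme with an auxiliary projective factor encoding $\alpha$, and matching $\delta$-stability to GIT stability via Hilbert--Mumford with a linearization whose ratio is tuned to $\delta$ (the paper specifies $n_2/n_1=\frac{P(l)-P(t)}{P(t)+\delta}\,\delta$ for $l\gg t$). The paper likewise leaves the detailed weight computation to \cite{HL2}.

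One point to correct in your parameter space: the linear scheme attached to $\pi_*\cE(m)^{\oplus n}$ parametrizes sections of $E(m)^{\oplus n}$, not multi-sections $\alpha:\cO_C^{\oplus n}\to E$; these are different since $H^0(E)\ne H^0(E(m))$ in general. The paper instead records $\alpha$ via the induced linear map $H^0(\cO_C(t))^{\oplus n}\to H^0(E(t))\cong V$ after twisting, works inside the product $W=\mathrm{Quot}_C(\CC^{P(t)}\otimes\cO_C(-t),P)\times\PP\Hom(H^0(\cO_C(t)^{\oplus n}),\CC^{P(t)})$, and then passes to the closed subscheme $Z\subset W$ on which this linear map actually arises from a sheaf homomorphism $\cO_C^{\oplus n}\to E$. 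With that adjustment your $\bR$ becomes the paper's $Z$, and the remainder of your outline coincides with the paper's argument.
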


Since the construction is more or less standard, we only give a sketch of the proof. When $n=1$, the theorem is just a special case of \cite{LePotier}.
The first step of the proof is to show the boundedness of the collection of all $\delta$-semistable pairs with fixed Hilbert polynomial. By the $\delta$-semistability, the slope of a subsheaf
\[ \mu(E')\le \mu(E')+\theta(E',\delta)\frac{\delta}{r(E')}\le \frac{\chi+\delta}{r}\]
is bounded uniformly from above and the boundedness follows by \cite[Theorem 3.3.7]{Huybrechts}. From the boundedness, we obtain the $t$-regularity of the underlying sheaves of all $\delta$-semistable pairs for some uniform $t$ which enables us to identify $\delta$-semistable pairs with some orbits in the product $$W=\mathrm{Quot}_C(\CC^{P(t)}\otimes \cO_C(-t),P)\times \PP\Hom\left(H^0(\cO_C(t)^{\oplus n}),\CC^{P(t)}\right)$$ of the Quot scheme and the projective space.
By choosing an integer $l>0$, the Quot scheme is embedded into a projective space
\[ \mathrm{Quot}_C(\CC^{P(t)}\otimes \cO_C(-t),P)\hookrightarrow \PP \left( \wedge^{P(l)}\left(\CC^{P(t)}\otimes H^0(\cO_C(l-t))\right) \right)\]
and thus we have an ample line bundle $\cO_{\mathrm{Quot}}(1)$.
Then it is easy to see that the moduli space we desired is isomorphic to the GIT quotient of a closed subscheme $Z$ of $W$ by $PGL(P(t))$. The rest is the comparison of the GIT stability with the $\delta$-stability above. One should be careful about the choice of a linearized line bundle $\cO_{W}(n_1,n_2)$ on $W$ but it is okay to choose $(n_1,n_2)$ satisfying
\[ \frac{n_2}{n_1}=\frac{P(l)-P(t)}{P(t)+\delta}\cdot\delta \]
for $l$ sufficiently large (independent of $\delta$). We leave the detail to the reader because it is almost identical to the calculation in \cite{HL2}.

It is also standard to relativize the construction as in \cite[Theorem 4.3.7]{Huybrechts}.

\begin{theo}\label{thm-4.13}
Let $f:\cC\to S$ be a flat family of prestable curves and let $\cO_{\cC}(1)$ be a line bundle on $\cC$, ample relative to $S$. Let $P$ be a linear polynomial. Then there exists a scheme $\bF_{\cC/S}(P)^\delta$, projective over $S$ which universally corepresents the functor $$\cF^\delta: (Schemes/S)^\circ\lra (Sets)$$
which associates to an $S$-scheme $T$ the set of isomorphism classes of $T$-flat families of $\delta$-semistable pairs on the fibers of $\cC\times_ST\to T$ with Hilbert polynomial $P$. For closed points $s\in S$, the fiber  $\bF_{\cC/S}(P)^\delta|_s$ over $s$ is isomorphic to the moduli space $\bF_{C}(P)^\delta$ of $\delta$-semistable pairs on the fiber $\cC|_s$ over $s$.
\end{theo}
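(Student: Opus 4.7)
The plan is to relativize the absolute GIT construction sketched just above for $\bF_{C}(P)^\delta$, closely following the template of \cite[Theorem 4.3.7]{Huybrechts}. I may assume $S$ is Noetherian.

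First I would establish uniform relative boundedness. Because $\delta$ and $P$ are fixed, $\delta$-semistability gives a uniform upper bound on the slope of any subsheaf $E'$ of a $\delta$-semistable $E$, exactly as in the absolute case. Applying \cite[Theorem 3.3.7]{Huybrechts} fiberwise and combining with Grothendieck's relative boundedness for $\cC\to S$ should produce a single integer $t\gg 0$, depending only on $\cC/S$, $P$, and $\delta$, such that for every geometric point $s\in S$ and every $\delta$-semistable pair $(E,\alpha)$ on $C_s=\cC|_s$ with $P_E=P$, the twist $E(t)$ is globally generated with $H^1(E(t))=0$. This is the step I expect to be the main technical obstacle: one must promote the fiberwise $t$-regularity used in the absolute construction to a bound uniform across a family of (possibly varying) singular curves.

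Next, form the relative parameter space. After replacing $S$ by a Zariski cover if necessary, trivialize the locally free sheaf $f_*\cO_{\cC}(t)$, and set
\[ \cW_S = \mathrm{Quot}_{\cC/S}\bigl(\cO_{\cC}(-t)^{\oplus P(t)}, P\bigr) \times_S \PP\bigl(\Hom(f_*\cO_{\cC}(t)^{\oplus n},\cO_S^{\oplus P(t)})\bigr), \]
which is projective over $S$. The fiberwise compatibility condition between the universal multi-section and the universal quotient cuts out a closed subscheme $\cZ_S\subset \cW_S$. By the boundedness of the first step, only finitely many Hilbert polynomials of potential destabilizing subsheaves need be tested on any fiber, so the locus of $\delta$-semistable pairs in $\cZ_S$ is an open subscheme $\cZ_S^{\mathrm{ss}}$, by a standard argument with relative $\mathrm{Quot}$-schemes.

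Third, apply relative GIT. The group $G=\mathrm{PGL}(P(t))$ acts on $\cW_S$ over $S$; the $S$-linearized ample line bundle $\cO_{\cW_S}(n_1,n_2)$ with
\[ \frac{n_2}{n_1}=\frac{P(l)-P(t)}{P(t)+\delta}\cdot\delta,\qquad l\gg 0, \]
furnishes a polarization that is uniform over $S$. Since the Hilbert--Mumford numerical criterion is fiberwise, the computation recalled above identifies, on each fiber, GIT-(semi)stability with $\delta$-(semi)stability of the associated pair. Applying \cite[Theorem 4.3.7]{Huybrechts} in this relative setting then yields a good quotient
\[ \bF_{\cC/S}(P)^\delta := \cZ_S^{\mathrm{ss}}/\!\!/ G, \]
projective over $S$ and universally corepresenting $\cF^\delta$. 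Since every step above commutes with base change to a closed point $s\in S$, the fiber $\bF_{\cC/S}(P)^\delta|_s$ is identified with the absolute moduli space $\bF_{C_s}(P)^\delta$ constructed in the previous theorem.
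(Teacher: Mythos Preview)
The paper does not actually prove this theorem: immediately before the statement it says only ``It is also standard to relativize the construction as in \cite[Theorem 4.3.7]{Huybrechts},'' and then states the result. Your proposal is a correct and careful unpacking of precisely that relativization procedure---uniform boundedness, relative Quot-scheme parameter space, relative GIT with the same choice of linearization---so you are taking exactly the route the paper points to, just with the details supplied rather than suppressed.
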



\section{Moduli of $\delta$-stable quasi-maps}\label{sec3}
In this section, we introduce the notion of $\delta$-stable quasi-maps on pointed prestable curves and prove that they form an open substack $\fQ^\delta_+$ of $\fQ_+$ which is a proper separated \DM stack for general $\delta>0$. By Theorem \ref{theo-sec1.5.1}, $\fQ_+^\delta$ has a perfect obstruction theory and thus a virtual fundamental class $[\fQ_+^\delta]\virt$. The GSW model $\fX_+^\delta=\fX_+\times_{\fQ_+}\fQ_+^\delta$ admits the localized virtual cycle $[\fX_+^\delta]\vilo$ whose support is contained in $\fQ_+^\delta$ and hence proper, for $d\ge 3(g-1)+m$.
Throughout this section, we let $2g-2+m>0$.

\begin{defi}\label{def:exc}
An $m$-pointed semistable curve $(C,q_1,\dots, q_m)$ is \emph{quasi-stable} if the length of any chain of rational bridges is at most 1 and $C$ has no rational tails. Here, a \emph{rational bridge} is a rational component with only two nodes and no marked points. A \emph{rational tail} is a rational component with only one node and one marked point. An \emph{exceptional component} means a rational bridge or a rational tail.
\end{defi}

\subsection{$\delta$-stable quasi-maps and their moduli}
Recall that a quasi-map to $\PP^4$ is a pair $(C,L,x)$ with $C$ a prestable curve, $L\in \mathrm{Pic}^d(C)$ and $x\in H^0(L^{\oplus 5})$.
\begin{defi}\label{def-delstab2}
For $\delta>0$, we say a quasi-map $(C,L,x)$ is \emph{$\delta$-stable (resp. $\delta$-semistable)} if the following hold:
\begin{enumerate}
\item $\olog_C\otimes L^\epsilon$ is ample for any $\epsilon>0$;
\item Given any line bundle $A$ on $C$ such that $\olog_C\otimes A^\epsilon$ is ample for any $\epsilon>0 $,  $x:\cO_C^{\oplus 5}\to L$ is $\delta$-stable (resp. $\delta$-semistable) with respect to $\olog_C\otimes A^\epsilon$ for $\epsilon>0$ arbitrarily small, in the sense of Definition \ref{def-delstab1}. The polarization $\olog_C\otimes A^\epsilon$ is used to define the Hilbert polynomial.
\end{enumerate}
\end{defi}

Recall that $\omega_{C}^\mathrm{log}:=\omega_{C}(\sum {q}_i)$ where $q_1,\dots, q_m$ are the smooth marked points on $C$. The first condition implies that $\olog_C$ is nef and thus $C$ is semistable admitting  a stabilization morphism $\rho:C\to \bar C$ to a stable curve which contracts rational components with only two special points, i.e. the exceptional components.

Since $\epsilon$ is arbitrarily small in the second condition of Definition \ref{def-delstab2}, this condition is enough to be checked for one line bundle $A$. The $\delta$-semistability implies that $C$ cannot contain a rational tail and that for any rational bridge $E\cong \PP^1$, $L|_E$ is $\cO_{E}(1)$. Indeed, by the first condition we must have $\deg L|_E>0$ for any exceptional component $E$. If $\deg L|_E\ge 1$ on a rational tail $E$ or $\deg L|_E\ge 2$ on a rational bridge $E$, then $L$ contains the destabilizing subsheaf $L'\cong \cO_{E}$. Note that $\chi(L')=1$ and $r(L')=\epsilon\cdot \mathrm{deg}(A|_E)$ so that $\mu(L')$ is arbitrarily large, and hence $L'$ is destabilizing regardless of $x$. By the same reason, no two exceptional components are adjacent. Therefore, the curve $C$ has to be quasi-stable (Definition \ref{def:exc}).

Let  $\rho:C\to\bar C$ be the stabilization morphism. In fact, by the following lemma we can replace the second condition of Definition \ref{def-delstab2} with
\begin{enumerate}
\item[(2\ensuremath{'})] $\rho_* x:\cO_{\bar C}^{\oplus 5}\to \rho_*L$ is a $\delta$-stable pair with respect to $\omega_{\bar C}^\mathrm{log}:=\omega_{\bar C}(\sum \bar{q}_i)$ for $\bar q_i=\rho(q_i)$.
\end{enumerate}

\begin{lemm}\label{lem-use}
Let $A$ be any line bundle on $C$ such that $\olog_C\otimes A^\epsilon$ is ample for any $\epsilon>0$. Suppose $(C, L,x)$ be a quasi-map such that $\olog_C\otimes L^\epsilon$ is nef for all $\epsilon>0$. Then $(L,x)$ is a $\delta$-stable pair on $C$ with respect to $\olog_C\otimes A^\epsilon$ for $\epsilon>0$ arbitrarily small if and only if the direct image $(\bar L,\bar x):=({\rho}_* L,{\rho}_* x)$ is a $\delta$-stable pair with respect to $\omega_{\bar C}^\mathrm{log}$.
\end{lemm}
\begin{proof}

Assume first that $(\bar L,\bar x)$ is a $\delta$-stable pair with respect to $\omega_{\bar C}^\mathrm{log}$. Since $\bar L$ is pure, the degree of $L$ restricted to any connected subcurve contracted by $\rho$ is either $0$ or $1$.
Suppose $L'$ is a destabilizing subsheaf of $L$ with respect to $\olog_C\otimes A^\epsilon$ for sufficiently small $\epsilon>0$. We may assume $L'$ is saturated. We claim that $\bar L':={\rho}_*L'$ is a destabilizing subsheaf of $\bar L$ with respect to $\omega_{\bar C}^\mathrm{log}$. Since the total degree of $L$ on any contracted chain is at most $1$, we have $\chi(\bar L)=\chi(L)$ and $\chi(L')=\chi(\bar L')$ by saturatedness. Since $\epsilon$ is arbitrarily small, $r(\bar L')$ (resp. $r(\bar L)$) is very close to $r(L')$ (resp. $r(L)$). Note that $r(\bar L')$ is a positive integer.  Since the inequality for the $\delta$-stability is strict, a very small perturbation does not change the inequality. Hence we find that $\bar L'=\rho_*L'$ is a destabilizing subsheaf of $\bar L$.

Conversely, assume that $(L,x)$ is $\delta$-stable with respect to $\olog_C\otimes A^\epsilon$ for any $\epsilon>0$. Then $0\le \deg L|_E\le 1$ for all exceptional component $E$ and hence $\bar L$ is pure.  Suppose $\bar L'$ is a destabilizing subsheaf of $\bar L$. Since $\rho^{-1}(q)$ is a chain of $\PP^1$'s with $\deg L|_{\rho^{-1}(q)}= 0$ or $1$ for any contracted node $q\in\bar C$, we can find a subsheaf $L'$ of $L$ such that $\rho_*L'=\bar L'$ such that $\theta(L',x)=\theta(\bar L',\bar x)$.
Again since strict inequality is preserved by small perturbation and $r(\bar L')$, $r(\bar L)$ are integers, we find that $L'$ is a destabilizing subsheaf of $L$.\end{proof}

\begin{defi}
Two quasi-maps $(C,L,x)$ and $(C',L',x')$ are called \emph{isomorphic} if there exist an isomorphism $\tau:C\to C'$ of $m$-pointed prestable curves and an isomorphism $\varphi:\tau^*L'\to L$ such that $\varphi\circ\tau^*x'=x$.
\end{defi}

\begin{defi}
A family of $\delta$-(semi)stable quasi-maps over $m$-pointed curves parameterized by a scheme $S$ consists of \begin{enumerate}
\item a flat family $\cC\to S$ of $m$-pointed quasi-stable curves and
\item an invertible sheaf $\cL$ on $\cC$ and a homomorphism $x_S:\cO_{\cC}^{\oplus 5}\to \cL$
\end{enumerate}
such that for every closed point $s\in S$, the fiber $(\cC_s,\cL_{s}, x_s)$ is a $\delta$-(semi)stable quasi-map to $\PP^4$.

Two families $x_S:\cO_{\cC}^{\oplus 5}\to \cL$  and $x'_S:\cO_{\cC'}^{\oplus 5}\to \cL'$ over $S$ are called isomorphic if there exist an isomorphism $\tau:\cC\to \cC'$ over $S$ and an isomorphism $\tilde{\varphi}:\tau^*\cL'\to \cL$ such that $\tilde{\varphi}\circ\tau^*x'_S=x_S$.
\end{defi}

\begin{defi}
Let $\fQ_+^\delta$ be the substack of $\fQ_+$ which associates to each scheme $S$ the groupoid
$\fQ_+^\delta(S)$
of families of $\delta$-semistable quasi-maps parameterized by $S$.
\end{defi}
\begin{rema}
Since ampleness and stability are both open conditions, the $\delta$-stable quasi-maps form an open subset for any family in $\fQ_+(S)$. Hence $\fQ_+^\delta$ is an open substack of $\fQ_+$.
\end{rema}
\begin{defi}
We say $\delta>0$ is \emph{general} with respect to a polynomial $P(t)=rt+\chi\in \ZZ[t]$ if there are no strictly $\delta$-semistable quasi-maps $(C,L,x)$ with $P_{\bar L}=P$ where $\rho:C\to \bar C$ is the stabilization morphism and $\bar L=\rho_*L$. Here strictly $\delta$-semistable means $\delta$-semistable but not $\delta$-stable. We say $\delta>0$ is a \emph{wall} if it is not general.
\end{defi}

In fact, there are only finitely many walls.
\begin{lemm}\label{lem-0219}
There are only a finite number of walls for $\fQ^\delta_+$.
\end{lemm}
\begin{proof}
We use Lemma \ref{lem-use}. Suppose $(L,x)$ is strictly $\delta$-semistable so that $(\bar L,\bar x)$ is a strictly $\delta$-semistable pair with respect to $\cO_{\bar C}(1):=\omega_{\bar C}^\mathrm{log}$. If a subsheaf $\bar L'$ of $\bar L$ has Hilbert polynomial $P_{\bar L'}(t)=r't+\chi'\in \ZZ[t]$ and $r'(P(t)+\delta)$ equals $rP_{\bar E'}(t)$ or $r(P_{\bar E'}(t)+\delta)$, then $\delta=\frac{r\chi'-r'\chi}{r'}$ or $\frac{r'\chi-r\chi'}{r-r'}.$ Note that $\bar L'$ is saturated in the sense that $\bar L /\bar L'$ is pure because otherwise the inverse image of the zero-dimensional part $T(\bar L/\bar L')$ in $\bar L$ has larger slope than $\bar L'$ and hence destabilizing.

In \cite[Section 6.1]{MOP}, it was proved that the collection of all $$(C, L)\in\mathfrak{P}_+$$ such that $\olog_C\otimes L^\epsilon$ is ample for any $\epsilon>0$ is bounded. Hence there are only a finite number of topological types (i.e. dual graphs of $C$ decorated by the degrees of $L$ restricted to irreducible components). For each  type, since $0\le r(\bar L')\le r(\bar L)$ and $\bar L'$ is saturated with $L$ invertible, there are only a finite number of possible pairs $(r(\bar L'),\chi(\bar L'))$. This certainly implies that there are at most finitely many walls.
\end{proof}

We are now ready to state the main result of this section. Note that $$P(t)=rt+\chi\in \ZZ[t]\and\chi=P(0)=\chi(\bar L)=\chi(L)=d-g+1$$ for $\delta$-semistable quasi-maps.

\begin{theo}\label{thm2-1} Suppose $2g-2+m>0$, $d+\delta\ge g-1$ and $\delta>0$ is general with respect to $P$. Then the open substack $\fQ_+^\delta$ is a proper separated \DM stack of finite type over $\CC$. Consequently, $\fQ_+^\delta$ has a perfect obstruction theory and a virtual fundamental class.
\end{theo}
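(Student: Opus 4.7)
My plan is to reduce Theorem \ref{thm2-1} to the relative moduli of $\delta$-stable pairs $\bF:=\bF_{\bar\cC/\mathfrak{M}_{g,m}}(P)^\delta$ constructed in Theorem \ref{thm-4.13}, where $\bar\cC\to\mathfrak{M}_{g,m}$ is the universal stable curve polarized by $\omega_{\bar\cC/\mathfrak{M}_{g,m}}(\sum\bar p_i)$ and $P(t)=rt+d-g+1$. Since $\delta$ is general, $\bF$ parametrizes only $\delta$-stable pairs and is projective over the smooth stack $\mathfrak{M}_{g,m}$. Pushforward along the stabilization morphism $\pi:C\to\bar C$ of each quasi-map family defines a forgetful morphism from $\qmap^\delta$ into a natural stacky refinement of $\bF$.

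For finite type and the DM property, I would argue as follows. The moduli $\bF$ is of finite type over $\mathfrak{M}_{g,m}$, and the fiber of the forgetful morphism over a pair $(\bar C,\bar E,\bar\alpha)$ is described by inserting a single $\cO(1)$-exceptional $\PP^1$ at each non-locally-free point of $\bar E$; each insertion is uniquely determined, so the fibers are finite. Hence $\qmap^\delta$ is of finite type. For the DM property I invoke Corollary \ref{cor-sec1.5.2}: any sheaf automorphism $\varphi:E\to E$ with $\varphi\circ\alpha=\alpha$ must be the identity, because $\alpha$ is nowhere vanishing on each non-exceptional component and has at most a single zero on each exceptional $\PP^1$; the residual pointed-curve automorphisms of $C$ fixing $(E,\alpha)$ form the finite group of $\PP^1$-automorphisms pinning down the node, the attaching special point, and the zero of $\alpha|_A$.

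For separatedness and properness I would verify the valuative criterion. Let $R$ be a DVR with fraction field $K$, and let $(C_K,E_K,\alpha_K)$ be a $\delta$-stable family over $\Spec K$. After a finite base change, I extend the stable model to $\bar C_R$ over $R$ using properness of $\mathfrak{M}_{g,m}$, and then extend $(\bar E_K,\bar\alpha_K)$ to a $\delta$-stable pair $(\bar E_R,\bar\alpha_R)$ over $\bar C_R$ by projectivity of $\bF\to\mathfrak{M}_{g,m}$. To recover the quasi-stable total space $C_R$, I blow up at the non-locally-free locus of $\bar E_R$ in the special fiber, introducing $\cO(1)$-exceptional components exactly at those points. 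Uniqueness of this blow-up together with separatedness of $\bF$ gives separatedness of $\qmap^\delta$; the above construction also supplies the limit, giving properness.

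The main obstacle will be the last step: verifying that the blow-up yields exactly a single $\cO(1)$-exceptional $\PP^1$ at each limit base point (so that $E$ is invertible and no two exceptional components are adjacent), and that the reconstructed $(C_0,E_0,\alpha_0)$ satisfies both ampleness condition (1) and $\delta$-stability condition (2) of Definition \ref{def-delstab2}. The hypothesis $d+\delta\ge g-1$, equivalently $\chi(\bar E)+\delta\ge 0$, is used here to control the Hilbert polynomial inequalities on inserted components, ensuring that they cannot acquire higher degree or carry destabilizing subsheaves, and thereby ruling out pathological limits outside $\qmap^\delta$.
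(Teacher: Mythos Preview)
Your overall strategy---push forward along the stabilization $\pi:C\to\bar C$ and work with the relative pair moduli over the stable-curve locus---is a genuinely different route from the paper's. The paper instead embeds the quasi-stable curves themselves via the very ample line bundle $L=\omega_C(\sum p_i)^{5(d+1)}\otimes E^5$, works with the pair moduli $\bF_{\cC_\cH/\cH}(P)^\delta$ relative to that Hilbert scheme, and proves a comparison lemma (Lemma \ref{lem-use}) showing that $\delta$-stability of $(E,\alpha)$ on $C$ with respect to $\omega_C(\sum p_i)\otimes L^\epsilon$ agrees with $\delta$-stability of $(\bar E,\bar\alpha)$ on $\bar C$. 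Your approach would avoid that lemma at the cost of having to reconstruct the quasi-stable curve from the non-locally-free locus of $\bar E$; either can be made to work, and yours is arguably more conceptual.

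However, your Deligne--Mumford argument has a genuine gap. You assert that on each exceptional component $A$ the restriction $\alpha|_A$ has a zero that, together with the two special points, pins down the $\CC^*$ of automorphisms of $A$. But nothing in Definition \ref{def-delstab2} forces $\alpha|_A\ne 0$, and indeed $\delta$-stable quasi-maps with $\alpha|_A=0$ on a separating exceptional component do occur. In that case the $\CC^*$ acting on $A$ is not constrained by any zero of $\alpha$, and one must argue that it still cannot lift to an automorphism of $(C,E,\alpha)$. The paper's Lemma \ref{cor-5.2} handles exactly this: the $\CC^*$ on $A$ acts with weights $\pm 1$ on $E$ over the two branches meeting $A$, and a combinatorial argument on the dual graph (using connectedness of $C$ and the fact that $\alpha$ is nonzero on every connected piece of the complement) shows no one-parameter subgroup of the torus $(\CC^*)^l$ can fix $(E,\alpha)$. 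Your sketch also claims ``$\alpha$ is nowhere vanishing on each non-exceptional component'', which is simply false in general; the correct reason that a sheaf automorphism $\varphi$ with $\varphi\circ\alpha=\alpha$ is the identity is Lemma \ref{lem-iso1}, which needs only $\alpha\ne 0$ globally. Without addressing the $\alpha|_A=0$ case, your finiteness-of-stabilizers argument is incomplete.
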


\subsection{Proof of Theorem \ref{thm2-1}}\label{sec:pf}
In this subsection we prove Theorem \ref{thm2-1}.

We first prove that $\fQ^\delta_+$ is a \DM stack by using \cite[Corollary 2.2]{Edidin}: It suffices to write $\fQ_+^\delta$ as the quotient of a scheme by a reductive group and then show that the stabilizer groups are finite and reduced as a scheme. 

By \cite[Section 6.1]{MOP} and item (1) of Definition \ref{def-delstab2}, we find that if a quasi-map $(C,L,x)$ is $\delta$-semistable,
\[
(\olog_C)^{\otimes 5(d+1)}\otimes L^{ 5}
\]
is very ample. The complete linear system of $(\olog_C)^{\otimes 5(d+1)}\otimes L^{ 5}$ gives us an embedding $\imath:C\hookrightarrow \PP^N$ into a projective space and thus a point in the product $\mathrm{Hilb}\times (\PP^N)^m$ where $\mathrm{Hilb}$ denotes the Hilbert scheme of curves in $\PP^N$. Let $\cH$ be the locally closed locus in $\mathrm{Hilb}\times (\PP^N)^m$ of quasi-stable curves $(C,q_1,\dots, q_m)$. Let $\cC_\cH\to \cH$ denote the universal curve with sections $q_i$. Then
\begin{equation}\label{eq-02171}\omega_{\cC_\cH/\cH}^\mathrm{log}\otimes \cO_{\PP^N}(1)|_{\cC_\cH}^{\otimes \epsilon}\end{equation}
is relatively ample over $\cC_\cH$ for $\epsilon>0$ arbitrarily small. By Theorem \ref{thm-4.13}, there exists a scheme $\mathbf{F}_{\cC_\cH/\cH}(P)^\delta$ that parameterizes $\delta$-stable pairs $x:\cO_C^{\oplus 5}\to L$ with respect to \eqref{eq-02171}. Let $W$ be the locally closed subscheme of $\mathbf{F}_{\cC_\cH/\cH}(P)^\delta$ of $\delta$-stable pairs $x:\cO_C^{\oplus 5}\to L$ with $L$ invertible such that
\[ (\olog_C)^{\otimes 5(d+1)}\otimes L^{ 5}\cong \cO_{\PP^N}(1)|_C.\] Then $W$ parameterizes all $\delta$-stable quasi-maps. There is a natural action of $PGL(N+1)$ on $W$ and two points in $W$ represent isomorphic quasi-maps if and only if they lie in the same orbit. Therefore, we find that
\[
\fQ_+^\delta = W/PGL(N+1).
\]

\begin{lemm}\label{cor-5.2}
The stabilizer group of a $\delta$-stable quasi-map $(C,L,x)\in W$ is finite and reduced. Therefore $\fQ_+^\delta$ is a \DM stack.
\end{lemm}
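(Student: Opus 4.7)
The plan is to verify that the stabilizer group of each $\delta$-stable quasi-map $(C,E,\alpha)\in W$ is finite; in characteristic zero this automatically implies reducedness, so the Deligne-Mumford conclusion will follow via \cite[Corollary 2.2]{Edidin}. An automorphism of $(C,E,\alpha)$ is a pair $(\tau,\varphi)$ with $\tau\in\Aut(C;p_1,\ldots,p_m)$ and $\varphi\colon\tau^*E\to E$ an isomorphism satisfying $\varphi\circ\tau^*\alpha=\alpha$.

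First I would use the canonical stabilization $\pi\colon C\to\bar C$ to construct the homomorphism
\[
\Stab(C,E,\alpha)\lra \Aut(\bar C;\bar p_1,\ldots,\bar p_m),\qquad (\tau,\varphi)\mapsto \bar\tau,
\]
and observe that the target is finite since $2g-2+m>0$. The problem then reduces to showing the kernel is trivial. For $(\tau,\varphi)$ in the kernel one has $\bar\tau=\mathrm{id}$, and pushing the relation $\varphi\circ\tau^*\alpha=\alpha$ forward through $\pi$ gives $\bar\varphi\circ\bar\alpha=\bar\alpha$ on $\bar C$; Lemma \ref{lem-iso1} applied to the $\delta$-stable pair $(\bar E,\bar\alpha)$ then forces $\bar\varphi=\mathrm{id}_{\bar E}$. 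It follows that $\tau$ and $\varphi$ are the identity on the non-exceptional part $D$ of $C$ and on $E|_D$ respectively.

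The main step is to rule out non-trivial scalings on the exceptional components. On each exceptional $A\cong\PP^1$ attached to $D$ at two nodes $q_1,q_2$, the restriction $\tau|_A$ must fix $q_1,q_2$ and is therefore a scaling $t\mapsto\lambda t$ for some $\lambda\in\CC^*$. The crucial input is that $E|_A\cong\cO_{\PP^1}(1)$ has degree one: any lift of $\tau_\lambda$ to $\cO_{\PP^1}(1)$ has $\CC^*$-weights differing by one at the two fixed points, so $\varphi|_A$ induces fiber multiplications by some $\mu\in\CC^*$ at $q_1$ and by $\mu\lambda$ at $q_2$. Matching with $\varphi|_{E|_D}=\mathrm{id}$ at each of the two nodes forces $\mu=\mu\lambda=1$, whence $\lambda=1$. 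This will make the kernel trivial and complete the proof.
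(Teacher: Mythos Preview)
Your argument is correct and in fact cleaner than the paper's. Both proofs invoke Lemma \ref{lem-iso1}, but at different places, and this is the key divergence.

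The paper applies Lemma \ref{lem-iso1} to $(E,\alpha)$ itself (which is a $\delta$-stable pair on $C$ for the polarization used to define $W$), obtaining only the injection $\Stab(C,E,\alpha)\hookrightarrow\Aut(C)$. It must then rule out the identity component $(\CC^*)^l\subset\Aut(C)$ by hand: exceptional components with $\alpha|_A\neq0$ are dispatched easily, but for those with $\alpha|_A=0$ the paper distinguishes separating from non-separating components, argues via stability that $\alpha$ cannot vanish on an entire connected piece of the complement, and finishes with a graph-theoretic sign argument to exclude one-parameter subgroups.

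You instead apply Lemma \ref{lem-iso1} to the pushed-forward pair $(\bar E,\bar\alpha)$ on $\bar C$. This immediately forces $\bar\varphi=\mathrm{id}$ and hence $\varphi|_D=\mathrm{id}$ on the nose, not merely up to scalars on components. With $\varphi$ pinned to the identity at both attaching nodes of each exceptional $A$, the degree-one constraint on $E|_A$ kills the scaling directly, and the combinatorics disappears. Your route uses the same ingredients (Lemma \ref{lem-iso1}, $E|_A\cong\cO_{\PP^1}(1)$, no two exceptional components adjacent) but organizes them more efficiently; the small price is checking that the pushforward behaves as claimed, e.g.\ $\pi_*\tau^*E=\bar E$ because $\pi\tau=\pi$, which is routine. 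One point worth making explicit in a full write-up: every exceptional component meets $D$ in exactly two nodes and carries no marked point---this follows from purity of $\bar E$ and is noted right after Definition \ref{def-delstab2}---so your matching at both $q_1$ and $q_2$ is indeed available.
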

\begin{proof}
Let $(L,x)$ be a $\delta$-stable pair with respect to \eqref{eq-02171}.
By definition, an automorphism of $L$ which preserves $x$ is the identity map since $x\ne 0$. This implies that the stabilizer group of $(C,L,x)$ is a subgroup of $\mathrm{Aut}(C)$. It is well known that $\mathrm{Aut}(C)$ is reduced as a scheme over $\CC$. Hence it is enough to show that the stabilizer group is finite. 
 
Infinite automorphisms of the underlying curve $C$ of a $\delta$-stable quasi-map can arise only from exceptional components, i.e. rational bridges. Recall that $C$ cannot contain a rational tail with one marked point by stability. Let $E$ be an exceptional component of $C$ and $L|_E\cong \cO_{\PP^1}(1)$. If the zero of $x|_E$ is away from the two nodes, a nontrivial automorphism of $E$ fixing the two nodes acts nontrivially on $x$ and hence no nontrivial stabilizer arises from $E$. Hence we may assume $x$ is zero at one or two of the nodes of some exceptional component $E$.

The group $\CC^*$ of automorphisms of $E$ fixing the two special points $0, \infty\in E$ acts on the fiber of $L|_{E}\cong \cO_{\PP^1}(1)$ over one special point $0\in E$ with weight $1$ and the other $\infty\in E$ with weight $-1$. We say $E$ is separating if $\overline{C-E}$ is disconnected. Then $\CC^*$ acts with weight $1$ (resp. $-1$) on $L$ restricted to the connected component of $\overline{C-E}$ containing $0$ (resp. $\infty$) if $E$ is separating. If $E$ is not separating, no automorphism of $E$ lifts to a stabilizer of $(C,L,x)$ because the automorphism action changes the line bundle $L$.

Let $E_1,\dots, E_l$ be separating exceptional components, at one or two of whose nodes $x$ is vanishing. Let $C^\dagger = \overline{C-\cup_{i=1}^l E_i}.$ If $x$ is zero on a connected component $C'$ of $C^\dagger$, then $(L,x)$ is unstable because either $(L',0)$ or $(L'',x)$ is destabilizing where $L'$ (resp. $L''$) is $L|_{C'}$ (resp. $L|_{{C^\dagger-C'}}$).

Suppose now that $x$ is nonzero on all connected components of $C^\dagger$.
We identify $\CC^*$ with the group of automorphisms of each exceptional component $E_j$ fixing two nodes. Consider the dual graph whose vertices are the connected components $C'$ of $C^\dagger$ and edges are the exceptional components $E_j$. For a vertex $C'$ of $C^\dagger$, let $s_j(C')=-1$ (resp. $+1$) if the automorphism group $\CC^*$ for $E_j$ acts with weight $-1$ (resp. $+1$). Then if we have a one-parameter subgroup of $(\CC^*)^l$ which fixes $(L,x)$, there exists $(a_j)\in \ZZ^l-\{0\}$ such that $\sum_{j=1}^l a_js_j(C')$ remains constant for any vertex $C'$. Let $C'$ and $C''$ be adjacent vertices of some edge $E_{j'}$. Then since $E_{j'}$ is separating, one can see that
 $$\sum_{j=1}^l a_js_j(C')=\sum_{j=1}^l a_js_j(C'')\pm 2a_{j'}.$$
 Since $(a_j)$ is a nonzero vector, there cannot be a one parameter subgroup of $(\CC^*)^l$ which fixes $(L,x)$.
Therefore, the stabilizer group is finite and reduced.
\end{proof}

Next we prove the separatedness. Let $0\in \Delta$ be a pointed smooth curve. Suppose there are two families of $\delta$-stable quasi-maps $(\cL_i,{x}_i)$ over $\cC_i\to \Delta$ for $i=1,2$  whose restrictions to $\Delta^*=\Delta-\{0\}$ are isomorphic. As explained in \cite[Section 6.2]{MOP}, possibly after base change ramified over $0$, we can find a family $\cC\to \Delta$ of pointed semistable curves and dominant morphisms $\pi_i:\cC\to \cC_i$ for $i=1,2$.
Since $\cC\to\Delta$ is projective, we can choose a relative ample line bundle $\cA$ over $\cC\to \Delta$ so that $\omega_{\cC/\Delta}^\mathrm{log}\otimes \cA^\epsilon$ is relatively ample for $\epsilon>0$. Then by Lemma \ref{lem-use}, both $(\pi_i^*\cL_i,\pi_i^*{x}_i)$ are families of $\delta$-stable pairs on $\cC\to S$ with respect to $\omega_{\cC/\Delta}^\mathrm{log}\otimes \cA^\epsilon$ for $\epsilon$ small enough. By Theorem \ref{thm-4.13}, we obtain two morphisms $\Delta\to \bF_{\cC/\Delta}(P)^\delta$ which coincide over $\Delta^*$. By the separatedness of $\bF_{\cC/\Delta}(P)^\delta$, we find that the two families $(\pi_i^*\cL_i,\pi_i^*{x}_i)$ should be isomorphic. However, by item (1) of Definition \ref{def-delstab2}, there cannot be a component $E$ in the central fiber of $\cC\to \Delta$ which is contracted by $\pi_i$ but not contracted by $\pi_j$ with $\{i,j\}=\{1,2\}$, because the degree of $\pi_i^*\cL_i|_E$ is $0$ and the degree of $\pi_j^*\cL_j|_E$ is positive by item (1) of Definition \ref{def-delstab2}. (Note that $E$ has only two special points.) This implies that the two families $\cC_i\to\Delta$ are isomorphic. By the separatedness of  $\bF_{\cC/\Delta}(P)^\delta$ again with $\cC=\cC_i$, we find that $(\cL_i,{x}_i)$ are isomorphic. This proves the separatedness.

\bigskip

Finally we prove the properness. Suppose we have a family of $\delta$-stable quasi-maps $(\cL^*,{x}^*)$ over $\cC^*\to \Delta^*$. We should extend it to $\Delta$.
After shrinking $\Delta$ if necessary, we may assume that the topological type of the fibers of $\cC^*\to \Delta^*$ is constant. As in \cite[Section 6]{MOP}, we normalize them and take the standard reduction of each connected component possibly after a base change ramified over $0$. In particular, we may assume that each component is a smooth surface. Upon gluing thus obtained families over $\Delta$ along the nodes, we obtain a family of semistable curves $\cC\to \Delta$ which extends $\cC^*\to\Delta^*$. Let $C_0$ denote the central fiber of $\cC\to \Delta$. Note that by construction, $\omega_{C_0}^\mathrm{log}$ is nef.
Since $\cC\to \Delta$ is projective, we can choose a relatively ample line bundle $\cA$. By Definition \ref{def-delstab2}, the family $(\cL^*,{x}^*)$ can be thought of as a family of $\delta$-stable pairs with respect to the ample line bundle $\omega_{\cC^*/\Delta^*}^\mathrm{log}\otimes \cA|_{\Delta^*}^\epsilon$ for $\epsilon$ sufficiently small. By the projectivity of $\bF_{\cC/\Delta}(P)^\delta$ over $\Delta$, we can extend this family to a family $(\cL,{x})$ of $\delta$-stable pairs parameterized by $\Delta$. Let $(L_0,x_0)$ be the $\delta$-stable pair over the central fiber $C_0$.

By construction, the normalization $\tilde{\cC}$ of $\cC$ is a disjoint union of smooth surfaces. Let $\rho:\tilde{\cC}\to\cC$ be the normalization map. Since $\cL$ is flat over $\Delta$ and pure on each fiber, $\cL$ is a torsion-free sheaf on $\cC$ so that the torsion-free part of the pullback $\tilde{\cL}$ of $\cL$ to $\tilde{\cC}$ is \[ \tilde{\cL}=\cF\otimes I_Z\] for some invertible sheaf $\cF$ and zero dimensional subscheme $Z$ of $\tilde{\cC}$. Since $\cL$ is pure on each fiber, we find by local calculation that $Z=\emptyset$ and hence $\tilde{\cL}$ is locally free. Let $q_\pm$ be sections of $\tilde{\cC}\to \Delta$ that are glued to the section $q:\Delta\to \cC$. Then $\cL$ along $q$ is recovered from the gluing homomorphism
\[
\tilde{\cL}|_{q_+}\oplus \tilde{\cL}|_{q_-}\mapright{(\psi_+,\psi_-)} \tilde{\cL}|_{q_+}\oplus \tilde{\cL}|_{q_-}/\cL|_q.
\]
Since $\tilde{\cL}$ is invertible, we may assume $\tilde{\cL}|_{q_\pm-q_\pm(0)}\cong \cO_{\Delta^*}\cong \cL|_{q-q(0)}$ after shrinking $\Delta$ if necessary. Let $(\psi_+^0,\psi_-^0):\tilde{\cL}|_{q_+-q_+(0)}\oplus \tilde{\cL}|_{q_--q_-(0)}\lra \cO_{\Delta^*}$ be the restriction of $(\psi_+,\psi_-)$ to $\Delta^*$. Since $\cL$ is locally free over $\Delta^*$, $\psi_\pm^0$ are surjective and we can extend $(\psi_+^0,\psi_-^0)$ to a homomorphism
\[
(\psi_+',\psi_-'): \tilde{\cL}|_{q_+}\oplus \tilde{\cL}|_{q_-}\lra \cO_{\Delta}(a0)
\]
for some integer $a$ such that $\psi_+'$ and $\psi_-'$ are not simultaneously vanishing at $0\in \Delta$. In case both $\psi_\pm'$ are not vanishing at $0$, the kernel $\hat{\cL}$ of the composite
\[ \rho_*\tilde{\cL}\lra \tilde{\cL}|_{q_+}\oplus \tilde{\cL}|_{q_-}\mapright{(\psi_+',\psi_-')} \cO_{\Delta}(a)
\] is locally free along $q$.  If $\psi_+'$ (resp. $\psi_-'$) is vanishing over $0\in \Delta$, we blow up $\tilde{\cC}$ at $q_-(0)$ (resp. $q_+(0)$) and let $\tilde{\cL}'=\pi^*\tilde{\cL}(-bE)$ where $\pi:\tilde{\cC}'\to \tilde{\cC}$ is the blow-up morphism with exceptional divisor $E$ and $b> 0$ is the vanishing order of $\psi_+'$ (resp. $\psi_-'$) at $q_+(0)$ (resp. $q_-(0)$).  By definition, $\tilde{\cL}'$ is locally free and we have
\[
(\psi_+'',\psi_-''): \tilde{\cL}'|_{q_+'}\oplus \tilde{\cL}'|_{q_-'}\lra \cO_{\Delta}((a+b)0)
\]
with $\psi_\pm''$ surjective where $q_\pm'$ are the proper transforms of $q_\pm$. By gluing $\tilde{\cC}'$ along $q'_\pm$, we obtain a family of nodal curves $\hat{\cC}\to \Delta$ and the kernel $\hat{\cL}'$ of the composite of $(\psi_+'',\psi_-'')$ with the restriction $\tilde{\cL}'\lra \tilde{\cL}'|_{q_+'}\oplus \tilde{\cL}'|_{q_-'}$ is an invertible sheaf on $\hat{\cC}$. Let $$\hat{\cL}=\hat{\cL}'\otimes \cO((b-1)E)$$
so that $\hat{\cL}|_E\cong \cO_{\PP^1}(1)$.
 It is easy to check that the multi-section $x$ of $\cL$ induces a natural multi-section $\hat{x}$ of $\hat{\cL}$ and the direct image of $\hat{\cL}|_0$ by $\hat{\cC}|_0\to \cC|_0$ is $\cL|_0=L_0$.

By Lemma \ref{lem-use}, item (2) of Definition \ref{def-delstab2} follows immediately from the above construction. So it only remains to prove item (1) of Definition \ref{def-delstab2}.

\begin{lemm}\label{lem0218}
Suppose $d+\delta\ge g-1$ so that $\chi+\delta\ge 0$. Let $x:\cO_C^{\oplus 5}\to L$ be a $\delta$-stable pair over a semistable curve $C$ with respect to some ample line bundle on $C$. Suppose that the rank of the pure sheaf $L$ on each component of $C$ is $1$. Then the degree of $L$ on each component of $C$ is nonnegative.
\end{lemm}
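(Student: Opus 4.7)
The plan is to argue by contradiction: assume some irreducible component $C_0 \subset C$ has $\deg E|_{C_0} = -a$ with $a \ge 1$, and exhibit a subsheaf of $E$ violating $\delta$-stability (or derive an immediate contradiction). The central construction is the short exact sequence $0 \lra E' \lra E \lra E_0 \lra 0$, where $E_0$ is the torsion-free quotient of $E \otimes \cO_{C_0}$---a line bundle of degree $-a$ on the smooth component $C_0$---and $E'$ is the corresponding pure subsheaf of $E$, which is nonzero whenever $C$ is reducible.

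I would then split according to whether $\alpha$ factors through $E'$. If it does not, then the composition $\cO_C^{\oplus n}\lra E \lra E_0$ is nonzero, producing a nonzero global section of a line bundle of negative degree on a smooth irreducible curve---impossible. So $\alpha$ must factor through $E'$, giving $\varepsilon(E',\alpha) = 1$. By Riemann--Roch, $\chi(E_0) = -a + 1 - g_0$ where $g_0 = g(C_0)$, hence $\chi(E') = \chi + a - 1 + g_0$. For $E'$ not to violate $\delta$-stability one would need $r(E)(\chi(E') + \delta) < r(E')(\chi + \delta)$, which rearranges to
\[
r(E_0)(\chi + \delta) + r(E)(a - 1 + g_0) < 0.
\]
However, $r(E), r(E'), r(E_0)$ are the positive leading coefficients of the relevant Hilbert polynomials (essentially the degrees of the ample line bundle on $C$, $\overline{C \setminus C_0}$, and $C_0$), $\chi + \delta \ge 0$ by the hypothesis $d+\delta \ge g-1$, and $a - 1 + g_0 \ge 0$. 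Both summands are nonnegative, so the strict inequality fails, contradicting $\delta$-stability.

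The only edge case is $C = C_0$ irreducible, where $E' = 0$ and the subsheaf argument does not apply directly. But in that case $\alpha \ne 0$ produces a nonzero global section of the rank-$1$ pure sheaf $E$; pulling back to the normalization then gives a nonzero section of a line bundle of degree $\deg E = -a < 0$ on a smooth connected curve, again a contradiction. The only mild obstacle I anticipate is bookkeeping around the short exact sequence when $E$ fails to be locally free at nodes incident to $C_0$; this is handled uniformly by the torsion-free quotient construction $E_0$, which is exactly what the lemma's phrase ``degree of $E$ on a component'' refers to, and the Euler characteristic computation above remains valid.
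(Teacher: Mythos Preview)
Your argument is correct and is essentially the dual of the paper's: where you test $\delta$-stability against the subsheaf $E' = \ker(E \to E_0)$, the paper tests against the quotient $E'' = E_0$ directly. Since a negative-degree torsion-free rank-$1$ sheaf on an integral curve has no sections, one gets $\varepsilon(E'',\alpha)=0$ and $\chi(E'') \le 0$, and the quotient form of stability yields $(\chi+\delta)/r < \chi(E'')/r(E'') \le 0 \le (\chi+\delta)/r$ in one line---no case split, no Riemann--Roch bookkeeping for $\chi(E')$, and no separate treatment of the irreducible case (there the contradiction is simply $\alpha \ne 0$ versus $h^0(E)=0$). Your route reaches the same contradiction via the complementary inequality; the extra structure (the explicit $a-1+g_0 \ge 0$ term) is not needed but does no harm.

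Two small caveats on the write-up. First, $C_0$ need not be smooth and $E_0$ need not be a line bundle---only integral and torsion-free of rank $1$---but the fact you need (no nonzero sections in negative degree) still holds: a nonzero section gives an injection $\cO_{C_0} \hookrightarrow E_0$, whence $\chi(E_0) \ge \chi(\cO_{C_0})$, i.e.\ $\deg E_0 \ge 0$. Second, in the irreducible edge case your appeal to the normalization is shaky, since pullback along $\nu$ does not preserve degree when $E$ fails to be locally free at a node; the same injection argument just given disposes of this case directly without passing to $\tilde C$.
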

\begin{proof} Suppose $L$ has negative degree on a component $B$ of ${C}$. Let $L''$ be the torsion free part of $L|_B$. Then $\theta(L'',{x})=0$ because $L''$ admits no sections. Hence the reduced Hilbert polynomial of $L''$ is $t+\chi(L'')/r(L'')$. But since $h^0(L'')=0$, $\chi(L'')\le 0$. Therefore, if $\chi+\delta\ge 0$,
$$0\le\frac{\chi+\delta}{r}<\frac{\chi(L'')}{r(L'')}\le 0$$
by $\delta$-stability; a contradiction.
\end{proof}

By construction of $\hat{\cL}$, the nonnegativity of $L_0$ over components of $C_0$ is preserved while the degrees on the new exceptional components are all $1$. Therefore the restriction $\hat{L}_0$ of $\hat{\cL}$ to the central fiber $\hat{C}_0$ has nonnegative degrees on all components of $\hat{C}_0$.
If there is a component of the central fiber $\hat{C}_0$ where $\omega_{\hat{C}_0}^\mathrm{log}\otimes \hat{L}_0^\epsilon$ is trivial, we can simply contract those components by the line bundle $\omega_{\hat{\cC}/\Delta}^\mathrm{log}\otimes \hat{\cL}^\epsilon$ (tensored with the pull-back of a sufficiently ample line bundle of $\Delta$). Hence we proved item (1) of Definition \ref{def-delstab2}. This completes the proof of Theorem \ref{thm2-1}.

\subsection{Cosection localization for $\delta$-stable quasi-maps}
We can consider the GSW model for $\delta$-stable quasi-maps as follows.
Let $$\hqmapf^\delta=\fQf^\delta\times_{\fQf} \hqmapf$$ be the open substack of $\delta$-stable quasi-maps $(C,L,x)$ together with $p$-fields $p$. Since $\fQf^\delta$ is a \DM stack and the forgetful morphism $\hqmapf\to \fQf$ is representable by \cite[Proposition 2.2]{ChangLi}, $\hqmapf^\delta$ is a \DM stack as well.
\begin{theo}\label{thm:properdeg}
Suppose $g=0$ or $d+\delta\ge 3(g-1)+m>0$ and let $\delta>0$ be general. Then the degeneracy locus $D(\sigma)$ in $\hqmapf^\delta$ is proper and separated.
\end{theo}
\begin{proof}
Since $\fQf^\delta$ is proper and separated, it suffices to show that the degeneracy locus $D(\sigma)$ is contained in $\fQf^\delta$, that is, if $(C,L,x,p)\in D(\sigma)$, then $p=0$. When $g=0$, $H^0(C,L^{ -5}\otimes \omega_C)=0$ for any $(C,L,x,p)\in \hqmapf^\delta$ and hence we always have $D(\sigma)\subset \fQf^\delta$. So we suppose $g\ge1$ from now on.
By Lemma \ref{lem-0118}, the theorem follows if we show that the support of the image of $x$ contains the support of $p$ for any $(C,L,x,p)\in \hqmapf^\delta$.

Suppose $C'\subset C$ is an irreducible component in the closure of $\mathrm{supp}(p)-\mathrm{supp}(x)\ne \emptyset$.
Let $d'$ be the degree of $L|_{C'}$, $g'$ be the genus of $C'$, $m'$ be the number of marked points on $C'$ and $k'=\#(C'\cap\overline{C-C'})$.
Since $p:L^{ 5}\to \omega_C$ is nonzero on $C'$,
\[ 0\le 5d'\le \deg \omega_C|_{C'}=2g'-2+k'.\]
If $\deg \omega_C|_{C'}=0$, then $d'=0$ and  item (1) of Definition \ref{def-delstab2} fails. Hence $\deg \omega_C|_{C'}>0$ and in particular $C'$ is not contracted by the stabilization morphism $\rho:C\to \bar C$.
Then by Lemma \ref{lem-use} $\bar L$ is $\delta$-stable with respect to $\olog_{\bar C}$. Note that $r(L)=2g-2+m$, $\chi(L)=d-g+1$, $r(L|_{C'})=2g'-2+k'+m'$ and $\chi(L|_{C'})=d'-g'+1$. Since $x|_{C'}=0$, we have by $\delta$-stability
\begin{equation}\label{eq:p0}
\frac{d'-g'+1}{2g'-2+k'}\ge\frac{d'-g'+1}{2g'-2+k'+m'}>\frac{d+\delta-g+1}{2g-2+m}.
\end{equation}
If $g'\ge 1$, we have
\[ \frac15\ge \frac{d'}{2g'-2+k'}\ge \frac{d'-g'+1}{2g'-2+k'}>\frac{d+\delta-g+1}{2g-2+m}\]
and thus we find that $$d+\delta<\frac75(g-1)+\frac15 m\le 3(g-1)+m.$$
When $g'=0$, $k'\ge 3$ and we have
\[ \frac{d+\delta-g+1}{2g-2+m}<\frac{d'+1}{k'-2+m'}\le 1\] since $d'$ is an integer satisfying $0\le d'\le \frac15 (k'-2)$.
Hence $d+\delta<3(g-1)+m$. So in all cases we have $d+\delta<3(g-1)+m$ which contradicts our assumption.
\end{proof}
\begin{rema}\label{nre5}
When $\delta=0^+$ and $d\ge 3(g-1)+m$ and $(d-g+1,2g-2+m)=1$, then we have $\fX_+^{\delta=0^+}=\fQ_+^{\delta=0^+}$. Indeed, since there are no strictly $0^+$-semistable quasi-maps, the inequality \eqref{eq:p0} with $\delta=0$ must be satisfied for any component $C'$ regardless of $x$. Hence by the same proof, we have $p=0$.
\end{rema}



%

\section{$\delta$-stability on the LG side}\label{sec:LG}
In this section we describe a parallel theory on the LG side.

\subsection{FJRW theory by cosection localization}
We start by reviewing the definition of the FJRW invariant following \cite{CLL}. As mentioned in Section \ref{sec1}, we focus on the case of Fermat quintic $W=\sum_{i=1}^5 x_i^5$ and $G=\ZZ_5\subset \CC^*$. Let $\zeta:= e^{\frac{2\pi i}{5}}$ be the generator of $\ZZ_5$. We fix $g,m$ and $d$ throughout this section.

\begin{defi}
An $m$-pointed \emph{twisted curve} is a proper one-dimensional \DM stack with at worst nodal singularities together with $m$ distinct smooth marked points such that
\begin{enumerate}
  \item points with nontrivial stabilizers are marked points and nodes;
  \item all nodes are \emph{balanced}, i.e., locally near a node $\{zw=0\}$, the isotropy group $\ZZ_5$ acts by $(z,w)\mapsto (\zeta z, \zeta^{-1}w)$.
\end{enumerate}
\end{defi}

We assume the stabilizer at each marked point is $\ZZ_5$. It is well-known that  $m$-pointed twisted stable curves of genus $g$ form a proper separated \DM stack \cite{Chiodo}. For a twisted curve $C$, let $\tau:C\to |C|$ be the coarse moduli space. For a line bundle $L$ on $C$, we denote by $|L|$ the pushforward $\tau_*L$.

\begin{defi}
For a line bundle $L$ on a twisted curve $C$, the \emph{multiplicity} of $L$ at a marked point or a node is defined as follows. At a marked point, the local picture of $L$ is $(z,\lambda)\in \CC^2$ where $z$ is the coordinate along the curve $C$ and $\lambda$ is the coordinate along $L$. The multiplicity of $L$ at this marked point is defined to be the integer $\ell\in\{0,1,...,4\}$ such that the action of $\ZZ_5$ is given by $\zeta .(z,\lambda)= (\zeta z, \zeta^\ell\lambda)$. Similarly at a node, the local picture is $(z,w,\lambda)$, where $z$ and $w$ are coordinates along $C$ and $\lambda$ is the coordinate along $L$. Then the multiplicity of $L$ at the node is defined to be the integer $\ell\in\{0,1,...,4\}$ such that the action of $\ZZ_5$ is given by $\zeta.(z,w,\lambda)= (\zeta z, \zeta^{-1}w, \zeta^\ell\lambda)$. The multiplicity of $L$ at $q$ is denoted by $\mathrm{mult}_{q}L$.
\end{defi}

Let $\vec{k}=(k_1,\dots,k_m)$ be the $m$-tuple of integers with $0\le k_i<5$. We define the stack of $m$-pointed $\ZZ_5$-spin curves as
\[\overline{M}_{g,\vec{k}}=\{(C,L,p)~|~C\text{ twisted stable curve},~ p: L^{ 5}\overset{\cong}{\to} \otw_C,~ \mathrm{mult}_{q_i}L=k_i \}.\]
Here $\otw_C=\omega_C(\sum q_i)$ where $q_i$ are the orbifold marked points on $C$. In \cite{CLL}, $\otw_C$ was denoted by $\olog_C$ while in \cite{FJR15, RossRuan} by $\omega_{\mathrm{log}}$. We use different notation to emphasize that we allow simple poles only at \emph{orbifold marked points}. We denote by $\olog_C$ the sheaf of sections of $\omega_C$ possibly with simple poles only at \emph{smooth marked points}.
The notation $\otw_C$ is convenient when comparing the LG side with the CY side where we have to consider both orbifold and smooth marked points.

The stack $\overline{M}_{g,\vec{k}}$ is a smooth proper \DM stack and is nonempty if and only if $(2g-2+m-\sum k_i)$ is a multiple of 5 \cite{FJR}. By local computation, one can check that $|\otw_C|\simeq \olog_{|C|}$, where $\olog_{|C|}:=\omega_{|C|}(\sum \tau(q_i))$ and that for $(C,L,p)\in \overline{M}_{g,\vec{k}}$ the degree of $|L|$ is $d:=\frac15(2g-2+m-\sum k_i)$. (See \cite[Prop. 2.2.8]{FJR}.)
Note that the stability condition here implies the surjectivity of the homomorphism $p:L^5\to \otw_C$ which amounts to saying that $$|\epsilon|\cdot \mathrm{length}_z(\coker\, p) <1, \quad \epsilon<\!<0,\ \ \forall z\in C,$$
i.e. the triple $(C,L,p)$ is $\epsilon=-\infty$-stable in the sense of Section \ref{s0.3.1}.

By the same technique as in Section \ref{sec1.5}, Chang, Li and Li in \cite{CLL} constructed a GSW model for $\ZZ_5$-spin curves. Let $\fX_{-}^{\epsilon=-\infty}$ be the stack parametrizing $\epsilon=-\infty$-stable quadruples $(C,L,x,p)$, namely $(C,L,p)\in \overline{M}_{g,\vec{k}}$ and $x=(x_j)\in H^0(C,L)^{\oplus 5}$. Chang, Li and Li showed that ${\fX_{-}^{\epsilon=-\infty}}$ is a separated \DM stack of finite type.

When all $k_i$ are nonzero, $\vec{k}$ is called \emph{narrow}. Otherwise it is called \emph{broad}. When $\vec{k}$ is narrow, Chang, Li and Li constructed a cosection for ${\fX_-^{\epsilon=-\infty}}$ as follows:
The relative obstruction sheaf $Ob_{{\fX_-^{\epsilon=-\infty}}/\overline{M}_{g,\vec{k}}}$ at $(C,L,x,p)$ is given by $H^1(L)^{\oplus 5}$. When $\vec{k}$ is narrow, by \cite[Lemma 3.2]{CLL}, $H^1(L)^{\oplus 5}\cong H^1(L(-\sum q_i))^{\oplus 5}$. By using this identification, the cosection $\sigma$ is defined by
\beq\label{eq:lgcos}
(\dot{x_i})\mapsto p \sum 5x_i^4 \dot{x_i}.
\eeq
Since $(\dot{x_i})$ is regarded as an element in $H^1(L(-\sum q_i))^{\oplus 5}$ and $x_i\in H^0(L)$, the above map gives an element in $H^1(C,\omega_C)\cong \CC$. This cosection of the relative obstruction theory can be lifted to a cosection $\sigma:Ob_{\fX_-^{\epsilon=-\infty}} \to \cO_{\fX_-^{\epsilon=-\infty}}$. As before, we get a localized virtual cycle $[{\fX_-^{\epsilon=-\infty}}]\vilo$.

It is shown in \cite{CLL} that  $\sigma|_{(C,L,x,p)}=0$ if and only if $x_i=0$ for all $i=1,\dots, 5$, in other words, the degeneracy locus of $\sigma$ is $\overline{M}_{g,\vec{k}}$. In particular, the degeneracy locus is proper, and hence the invariant is defined by integrating against the localized virtual cycle $[{\fX_-^{\epsilon=-\infty}}]\vilo$. Chang, Li and Li \cite{CLL} showed that so defined invariants agree up to sign with the FJRW invariants defined by Fan, Jarvis and Ruan in \cite{FJR}.

\begin{rema}
The cosection \eqref{eq:lgcos} is parallel with the cosection on the CY side defined in \eqref{eq-hpot} where we considered the obstructions for $x$ and $p$ together. The obstruction for $p$ lies in $H^1(L^{-5}\otw_C)$ which is isomorphic to $H^1(\cO_C)$ since $p$ is an isomorphism. But this cancels with the deformation of the line bundle $L$. So, it is enough to consider only the obstruction for $x$. Note that the cosection \eqref{eq:lgcos} is the restriction of the cosection \eqref{eq-hpot}. However, for the other $\epsilon$- or $\delta$-stable quasi-maps we will discuss in next subsection, we will continue to use the cosection defined by the formula \eqref{eq-hpot}.
\end{rema}

\subsection{$\delta$-stable quasi-maps on the LG side}
Recall that we have the notion of $\epsilon<0$-stabilities on the LG side as defined in Section \ref{s0.3.1}.
In \cite{RossRuan}, Ross and Ruan studied $\epsilon$-wall crossing for the case $g=0$ on the LG side. They defined the \emph{twisted spin structure} by allowing the map $p:L^{ 5}{\to} \otw_C$ to be zero at finitely many smooth points. Ross and Ruan also derived the wall crossing formula.

In this section we introduce the $\delta$-wall crossing on the LG side. Let us denote $\tL:=L^{-5}\otimes\otw_C$.
Let $\fX_-$ (resp. $\fQ_-$) be the stack of quadruples $(C,L,x,p)$ (resp. triples $(C,L,p)$) of a twisted semistable curve $C$, a line bundle $L$ on $C$, a nonzero section $p\in H^0(C,\tL)$ and $x\in H^0(C,L^{\oplus 5})$.
We give an analogous definition for the $\delta$-stable quadruples $(C,L,x,p)$ (resp. triples $(C,L,p)$) as follows.

\begin{defi}
  For $\delta<0$, a quadruple $(C,L,x,p)$ (resp. triple $(C,L,p)$)  is \emph{$\delta$-(semi)stable} if the following hold:
  \begin{enumerate}
    \item $\otw_{|C|}\otimes |\tL|^a$ is ample for any $a>0$;
\item if $A$ is a line bundle on $C$ such that $\otw_{|C|}\otimes A^a$ is ample for all $a>0$, $(|\tilde{L}|, |p|)$ is $|\delta|$-(semi)stable with respect to $\otw_{|C|}\otimes A^a$ for $a>0$ arbitrarily small.
\end{enumerate}
Here $|\tilde{L}|=\tau_*\tilde{L}$ and $|p|=\tau_*p\in H^0(|C|,|\tilde{L}|)$ where $\tau:C\to |C|$ is the coarse moduli space. Note that $|\tilde{L}|$ is a line bundle on $|C|$ because $\tilde{L}$ is the pullback of a line bundle on $|C|$ as the orbifold structures on $\tilde{L}$ are trivial everywhere.
\end{defi}

As before we fix $\vec{k}=(k_1,\dots,k_m)$ and $d:=\frac15(2g-2+m-\sum k_i)$. For $\delta<0$, we let $\fX_-^\delta=\fX_-^\delta(g,\vec{k},d)$ (resp. $\fQ_-^\delta=\fQ_-^{\delta,\tw}(g,\vec{k},d)$) denote the stack of $\delta$-stable quadruples $(C,L,x,p)$ (resp. triples $(C,L,p)$) satisfying $\mathrm{mult}_{q_i}L=k_i$ and $\deg|L|=d$. The stack of pairs $(C,L)$ of a twisted curve $C$ and a line bundle $L$ on $C$ (without stability) is denoted by $\fP_-^{\tw}(g,\vec{k},d)$.

We now construct the moduli stack $\fX_-^\delta$ of $\delta$-stable quadruples as a separated \DM stack. The key point is that for any line bundle $L$ on a twisted stable curve $C$ with stabilizers $\ZZ_5$, $\tilde{L}=L^{-5}\otimes \omega_C^\tw$ has trivial orbifold structure and hence is the pullback of a line bundle on the coarse moduli space $|C|$ of $C$.

Let $\tilde{d}=-5d+2g-2+m$ and $\delta$ be general.
From Section \ref{sec3}, we have the moduli stack $\fQ^{\delta}_-(g,m,\tilde{d})$ of $\delta$-stable triples $(|C|,\tilde{L},p)$ where $\tilde{L}\in \mathrm{Pic}^{\tilde{d}}(|C|)$ and $p\in H^0(\tilde{L})$ with $p\ne 0$. Here the section $p$ plays the role of $x$ in Section \ref{sec3}. Although $x$ in Section \ref{sec3} is a 5-tuple of sections, the same theory works for one section $p$ (See Remark \ref{rmkn}).

The construction of $\fX^\delta_-$ can be summarized by the following diagram.
\begin{equation}\label{lgdiagram}
\xymatrix{
\fX^\delta_-=\bC(\pi_*\cL^{\oplus 5})\ar[d]\\
\fQ^{\delta,\tw}_{-}(g,\vec{k},d)\ar[r]^{\text{finite}} \ar[d] & \fQ^{\delta,\tw}_{-}(g,m,\tilde{d})\ar[r]^{\text{finite}} \ar[d] & \fQ^{\delta}_-(g,m,\tilde{d})\ar[dd] \\
\fP_-^{\tw}(g,\vec{k},d)\ar[dr]\ar[r]^{\text{finite}}_{L\mapsto L^{-5}\omega_C^\tw} & \fP_-^{\tw}(g,m,\tilde{d})\ar[d]\\
& \fM_{g,m}^{\tw}\ar[r]^{\text{finite}}_{C\mapsto |C|} &\fM_{g,m}^{\text{ps}}
}\end{equation}
Here $\bC(\pi_*\cL^{\oplus 5})$ denotes the direct image cone constructed in \cite{ChangLi}
and $\fX^\delta_-=C(\pi_*\cL^{\oplus 5})$ is the stack of quadruples $$\fX^\delta_-=\{(C,L,x,p)\,|\,(C,L,p)\in \fQ^{\delta,\tw}_-(g,m,d),  x\in H^0(L^{\oplus 5}), \mathrm{mult}_{q_i}L=k_i\}.$$

All the rectangles above are fiber products. The bottom right morphism sends each twisted prestable curve $C$ to its coarse moduli space $|C|$.
All vertical arrows are forgetful: The right vertical $(C,\tilde{L},p)\mapsto C$ is forgetting the line bundle $\tilde{L}=L^{-5}\omega_C^\tw$ and the $p$-field. The middle verticals $(C,\tilde{L},p)\mapsto (C,\tilde{L})\mapsto C$ forget the $p$-field and the line bundles successively. The left vertical $(C,L,p)\mapsto (C,L)$ forgets the $p$-field.

All the horizontal arrows are finite morphisms because the morphisms $C\mapsto |C|$ and $L\mapsto L^{-5}\omega_C^\tw$ are finite and the rest are base changes.

By Theorem \ref{thm2-1}, $\fQ^{\delta}_-(g,m,\tilde{d})$ is a proper separated \DM stack of finite type for general $\delta$. Therefore we find that $\fX^\delta_-$ is a separated \DM stack of finite type for general $\delta$.

To define a virtual cycle, we may apply the techniques of \cite{ChangLi} and cosection localization.
In \cite{Cheong, CCFK}, Cheong, Ciocan-Fontanine and Kim studied the deformation theory of line bundles on orbifolds and showed that the stack $\fP_-^\tw:=\fP_-^\tw (g,\vec{k},d)$ is smooth over $\mathfrak{M}_{g,m}^{\tw}$. Therefore we may apply \cite[Proposition 2.5]{ChangLi} to give a relative perfect obstruction theory
\[  \mathbb{L}^\vee_{\fX_-^\delta/\fP_-^\tw}\lra R\pi_*(\cL_{\fX_-^\delta}^{\oplus 5}\oplus [\cL^{ -5}_{\fX_-^\delta}\otimes \omega_{\pi}])      \]
over $\mathfrak{P}$ where $\pi:\cC_{\fX_-^\delta}\to \fX_-^\delta$ is the universal curve and $\cL_{\fX_-^\delta}$ is the universal line bundle.

We define the cosection by the same formula \eqref{eq-hpot}. Then we get a localized virtual cycle $[\fX_-^\delta]\vilo$.
\begin{theo}\label{thm:LGproper}
  Suppose $g=0$ or $-5d-\delta>g-1+m$ and let $\delta$ be general. Then the degeneracy locus in $\fX_-^\delta$ is proper and separated.
\end{theo}

\begin{proof}
The proof is analogous to that of Theorem \ref{thm:properdeg}. We show that the degeneracy locus is contained in $\fQ^{\delta,\tw}_-$ which is proper. When $g=0$, $H^0(L)=0$ for any $(C,L,x,p)\in \fX_-^\delta$. When $g\ge 1$, recall that in Theorem \ref{thm:properdeg}, we found a condition so that $\mathrm{supp}(p)-\mathrm{supp}(x)= \emptyset$. On the LG side, we need to have $\mathrm{supp}(x)-\mathrm{supp}(p)= \emptyset$.

Suppose $C'\subset C$ is an irreducible component in the closure of $\mathrm{supp}(x)-\mathrm{supp}(p)\ne \emptyset$. Let $d'$ be the degree of $L|_{C'}$, $g'$ be the genus of $C'$ and $k'=\#(C'\cap\overline{C-C'})$.

Since $x\in H^0(L)^{\oplus 5}$ is nonzero on $C'$, we have $d'\ge 0$.
Since $p|_{C'}=0$, by stability we have
\[
\frac{-5d'+g'-1}{2g'-2+k'}\ge\frac{-5d'+g'-1}{2g'-2+k'+m'}>\frac{-5d-\delta+g-1}{2g-2+m}.
\]

Then,
\[
\frac{-5d-\delta+g-1}{2g-2+m}<\frac{-5d'+g'-1}{2g'-2+k'}<\frac{g'-1}{2g'-2+k'}<1.
\]
So, we have $-5d-\delta<g-1+m$ which contradicts our assumption.
\end{proof}

\section{Comparison of moduli spaces when $g=0$}\label{sec:WCg0}
In this section, we study how the moduli spaces on the CY side when $g=0$ are related as the $\epsilon$- and $\delta$-stability conditions vary. We assume that  $\gcd(d+1,m-2)=1$ (See \eqref{eq:coprime}). When $g=0$, there are no nonzero $p$-fields and the cosection-localized virtual cycle is the Euler class of the obstruction sheaf coming from $p$-fields (Remark \ref{rem:cfk}). Therefore $\fX_+=\fQf$. Throughout this section, let $n$ be the size of the multi-section $x$, that is, $x\in H^0(L)^{\oplus n}$ (See Remark \ref{rmkn}).

\subsection{At $\delta=0^+$}
We denote by $\delta=0^+$ for $\delta>0$ sufficiently close to zero so that there is no other wall between $0$ and $\delta$. If $\gcd(d+1,m-2)=1$, then there are no strictly semistable line bundles $L$ on $C$. In such case by definition of $\delta$-stability, the quasi-map $(C,L,x)$ is $\delta=0^+$-stable if and only if $L$ is a stable line bundle.

\begin{prop}\label{prop:g0stable} Assume $g=0$ and $\gcd(d+1,m-2)=1$. Fix $(m,d,n)$ and an $m$-pointed quasi-stable curve $C$. Then there is a unique line bundle $L$ of degree $d$ on $C$ such that $(C,L,x)$ is a $\delta=0^+$-stable for some $x\in H^0(L)^{\oplus n}$, and for this $L$, $(C,L,x)$ is a $\delta=0^+$-stable for any nonzero $x\in H^0(L)^{\oplus n}$.
\end{prop}
\begin{proof}
  If $C$ is the irreducible $\PP^1$, then $L=\cO_{\PP^1}(d)$ and there is nothing to prove. When $g=0$, every node is a separating node. Fix a node $p\in C$. Let $C'$ and $C''$ be the two subcurves of $C$ such that $C'\cap C''=\{p\}$ and $C'\cup C''=C$. Let $d'$ (resp. $d''$) be the degree of $L|_{C'}$ (resp. $L|_{C''}$) and let $m'$ (resp. $m''$) be the number of marked points on $C'$ (resp. $C''$). Clearly, $d=d'+d''$ and $m=m'+m''$. Since $\olog_C\otimes L^\epsilon $ is ample for all $\epsilon>0$ by the $\delta$-stability, we have $m'\ge 2$ and $m''\ge 2$.

  It is enough to show that $d'$ and $d''$ is uniquely determined by the stability condition.

  By $\delta=0^+$-stability, we have
  \[ \frac{d'+1}{m'-1} > \frac{d+1}{m-2} \hspace{1em} \text{and} \hspace{1em} \frac{d''+1}{m''-1} > \frac{d+1}{m-2}.
  \]
  Then,
  \begin{equation}\label{eq:0stableg0}
  \frac{d+1}{m-2}(m'-1) -1 < d'< d- \frac{d+1}{m-2}(m''-1) +1 =\frac{d+1}{m-2}(m'-1).  \end{equation}
  Therefore $d'=\lfloor \frac{d+1}{m-2}(m'-1) \rfloor$ is the unique integer satisfying these inequalities. Similarly, $d''=\lfloor \frac{d+1}{m-2}(m''-1) \rfloor$. Therefore there is a unique line bundle $L$.
\end{proof}

\begin{rema}
The condition that there are no strictly semistable line bundles is essential in the proof of Proposition \ref{prop:g0stable}. For example, suppose that $C=C'\cup C''$ and $C'\cap C''=\{q\}$ and each of $C'$ and $C''$ is isomorphic to $\PP^1$ and has two marked points, Let $L_1$ and $L_2$ be the line bundles on $C$ such that $\deg(L_1|_{C'})=1$, $\deg(L_1|_{C''})=0$, $\deg(L_2|_{C'})=0$ and $\deg(L_2|_{C''})=1$. Then one can see that both of $L_1$ and $L_2$ satisfy \eqref{eq:0stableg0}.
\end{rema}

\begin{theo}\label{thm:g0delta0}
Assume $g=0$ and $\gcd(d+1,m-2)=1$. Then the moduli space $\hqmapf^{\delta=0^+}=\fQf^{\delta=0^+}$ is a projective bundle over $\overline{\cM}_{0,m}$.
\end{theo}
\begin{proof}
Let $(C,L,x)$ be a $\delta=0^+$-stable quasi-map. We first claim that $C$ is a stable curve. We know that $C$ is quasi-stable. Suppose that $C$ has a rational bridge $E$ with no marked points. Let $p_1$ and $p_2$ be nodes of $E$ and $C_1$ and $C_2$ be subcurves of $C$ such that $C=C_1\cup E \cup C_2$ and $C_i\cap E=\{p_i\}$ for $i=1,2$. By $\delta$-stability, the degree of $L$ on $E$ must be 1. Then by applying \eqref{eq:0stableg0} to two different decomposition $(C_1\cup E, C_2)$ and $(C_1, E\cup C_2)$, we get a contradiction. Hence $C$ does not have a rational bridge and hence is a stable curve in $\overline{\cM}_{0,m}$.

By Proposition \ref{prop:g0stable}, the line bundle $L$ is uniquely determined by $C$. Moreover since $L$ has a positive degree on each component, $h^1(C, L)=0$ and $h^0(C, L)=d+1$ by Riemann-Roch. Therefore, we see that $\hqmapf^{\delta=0^+}=\fQf^{\delta=0^+}$ is a projective bundle over $\overline{\cM}_{0,m}$ of rank $n(d+1)-1$.
\end{proof}

\subsection{Contraction morphisms}\label{sec:deltawc}

By Lemma \ref{lem-0219}, there are only finitely many walls for a fixed polynomial $P(t)=rt+\chi\in \QQ[t]$. Thus the $\delta$-line $(0,\infty)$ is partitioned into finite number of intervals on each of which $\fQ_+^\delta$ and $\fX_+^\delta$ stays constant. Let $\delta_0$ be a wall and $\delta_+>\delta_0>\delta_-$ be sufficiently close so that there are no other walls between $\delta_-$ and $\delta_+$.

As in the case of stable quotients \cite{MOP}, there is a natural morphism
\[ \mathfrak{q}_{\delta_0}: \fQ_+^{\delta_+}\lra \fQ_+^{\delta_-}.\]
Given $(C,L,x)\in \fQ_+^{\delta_+}$, the image $\mathfrak{q}_{\delta_0}(C,L,x)$ is obtained by the following construction. If $(C,L,x)$ is $\delta_-$-stable then trivially $\mathfrak{q}_{\delta_0}(C,L,x)=(C,L,x)$.

Let $(C,L,x)\in \fQ_+^{\delta_+}-\fQ_+^{\delta_-}$. Then there exists a subsheaf $\bar L'$ of $\bar L=\rho_*L$ (where $\rho: C\to \bar C$ is the stabilization) such that $\bar x$ does not factor
through $\bar L'$ and \beq\label{1201311} \frac{\chi'}{r'}=\frac{\chi+\delta_0}{r},\eeq
where $r't+\chi'\in \QQ[t]$ is the Hilbert polynomial of $\bar L'=\rho_*L'$ with respect to the ample line bundle $\olog_{\bar C}$ on $\bar C$. If $L'$ is not saturated, we replace it by a saturated subsheaf of $L$ containing $L'$ which is destabilizing. Hence $L'$ is completely determined by its support $C'$. We take maximal such a subcurve $C'$.

Let $C''$ be the complementary subcurve of ${C'}$ in $C$.
Then the quotient $L'':=L/L'$ is a sheaf on $C''$ and $\bar L''=\rho_*L''$ has Hilbert polynomial $r''t+\chi''$ with $r''=r-r'$ and $\chi''=\chi-\chi'$. Let $x''$ denote the image of $x$ by the quotient map $L\to L''$.
Since $(L, x)$ is $\delta_0$-semistable on $C$, $L'$ is a semistable sheaf on $C'$ and $(L'',x'')$ is $\delta_0$-semistable on $C''$.

If we fix $(C', L',0)$ and $(C'', L'', x'')$ as well as $C=C'\cup C''$, then the set of triples $(C,L,x)$ which are extensions of $(C'',L'',x'')$ by $(C',L',0)$ is $\PP H^0(C',{L'})^{\oplus n}$. All of these extensions belong to $ \fQ_+^{\delta_+}-\fQ_+^{\delta_-}$. To get a $\delta_-$-stable quasi-map, we take the following procedure, which is called the \emph{modification} of $(C,L,x)$ along $C'$. We exchange the subpair and the quotient pair and take a quasi-map in the extension $(C_1,L_1,x_1)$ of $(C',L',0)$ by $(C'',L'',x'')$. Such an extension is unique because in genus zero the line bundle $L$ is uniquely determined and the section $x$ is uniquely determined by $x''$. If $(C_1,L_1,x_1)$ has a rational bridge $E$ with no marked point such that $L_1|_E$ has degree 0, we contract $E$ and denote the resulting triple by $(C_1,L_1,x_1)$ again by slight abuse of notation. The quasi-map $(C_1,L_1,x_1)$ is called the modification of $(C,L,x)$ along $C'$.

We repeat this process until we get $\delta_-$-stable triple. If $(C_1,L_1,x_1)$ is not $\delta_-$-stable, take a maximal subcurve $C_1'$ which supports a saturated $L_1'$ satisfying \eqref{1201311} as before. Then $C_1'$ is a subcurve of $C'$ since otherwise $C'$ was not maximal. We modify along $C_1'$ to get $(C_2,L_2,x_2)$, which is again uniquely determined by $(C_1,L_1,x_1)$.
It is straightforward that after finitely many steps we get a $\delta_-$-stable triple, which by definition is the image $\mathfrak{q}_{\delta_0}(C,L,x)$. Therefore, the map $\mathfrak{q}_{\delta_0}: \fQ_+^{\delta_+}\lra \fQ_+^{\delta_-}$ is well-defined and it contracts the locus of $\delta_-$-unstable triples.

More precisely, the modification along $C'$ can be described as follows. Consider the moduli stack $\overline{\cM}_{0,m}$ of $m$-pointed stable curves of genus $0$. Let $\widetilde{\cM}_{0,m}$ be the blow-up of $\overline{\cM}_{0,m}$ so that the locus in $\overline{\cM}_{0,m}$ of curves which have subcurves $C'$ with Hilbert polynomials $r't+\chi'$ satisfying
\eqref{1201311} becomes a Cartier divisor. Let
\[ \widetilde{\fQ}_{+} = \fQ_+^{\delta_+}\times_{\overline{\cM}_{0,m}}\widetilde{\cM}_{0,m}.\]
Then the locus of $\delta_-$-unstable quasi-maps is a Cartier divisor $D$ on $\widetilde{\fQ}_{+}$.

Let $\widetilde{\cC}\to \widetilde{\fQ}_{+}$ and $\tilde x: \cO_{\widetilde{\cC}}^{\oplus 5}\to \widetilde{\cL}$ be the pull-back of the universal family of $\fQ^{\delta_+}_+$. Over the divisor $D$, the universal curve $\widetilde{\cC}|_D$ decomposes as $\cC'\cup \cC''$ such that for $\xi\in D$, the restriction $L''_\xi$ of $L_\xi=\widetilde{\cL}|_\xi$ to $C''_\xi=\cC''|_\xi$ gives the $\delta_-$-destabilizing quotient after stabilization. Let
\[\widetilde{x}_1:\cO_{\widetilde{\cC}}^{\oplus 5}\mapright{\widetilde{x}} \widetilde{\cL}\hookrightarrow \widetilde{\cL}(\cC')=:\widetilde{\cL}_1.\]
Then over $\xi\in D$, $\widetilde{L}_1$  is a line bundle which has $L''_\xi$ as a subsheaf and $\widetilde{x}_1|_\xi$ factors through $L''_\xi$. Now we contract all rational bridges $E$ with $\widetilde{L}_1|_E\simeq \cO_E$ to get a family  $(\widetilde{\cL}_1,\widetilde{x}_1)$ over $\widetilde{\fQ}_{+}$.
We repeat this process using $\widetilde{\fQ}_{+}$ in place of $\fQ_+^{\delta_+}$ until we get a family of $\delta_-$-stable quasi-maps.

Denote by $\widetilde{\fQ}_{+}^{\delta_0}$ the stack obtained by sequence of fiber products with the blow-ups of $\overline{\cM}_{g,m}$ on which we have a family $(\widetilde{\cL}_-,\widetilde{x}_-)$ of $\delta_-$-stable quasi-maps. Then $(\widetilde{\cL}_-,\widetilde{x}_-)$ gives us a morphism $$\widetilde{\mathfrak{q}}_{\delta_0}:\widetilde{\fQ}_{+}^{\delta_0}\lra \fQ_+^{\delta_-}.$$
Since the modification of a quasi-map is uniquely determined, this map factors through the projection $\widetilde{\fQ}_{+}^{\delta_0}\to \fQ_+^{\delta_+} $. Moreover, we have the commutative diagram
\beq\label{diadelpdelm}\xymatrix{
\fQ_+^{\delta_+} \ar[d] \ar[r]^{\mathfrak{q}_{\delta_0}} & \fQ_+^{\delta_-}\ar[d]\\
\overline{\cM}_{0,m} \ar@{=}[r]& \overline{\cM}_{0,m} .
}\eeq

\begin{exam}\label{ex:small}
When modifying along $C'$, we need to first blow up $\overline{\cM}_{0,m}$ along the locus of curves having $C'$ as a subcurve. This process is necessary because such locus can have codimension greater than one as the following example shows.

Let $C$ be the curve with 5 marked points having three rational components. Each component has three special points as shown in the picture below. The number labeled at each component is the degree of the line bundle $L$ restricted to that component.

\begin{center}
\begin{tikzpicture}[line cap=round,line join=round,>=triangle 45,x=0.2cm,y=0.2cm]
\clip(-2,-2) rectangle (38.5,12);
\draw [line width=1pt,domain=0:3] plot(\x,{(--10-3*\x)});
\draw [line width=1pt,domain=1:11] plot(\x,{(--2)});
\draw [line width=1pt,domain=9:12] plot(\x,{(-26 --3*\x)});
\begin{scriptsize}
\draw [fill=black] (1,7) circle (1.8pt);
\draw [fill=black] (1.67,5) circle (1.8pt);
\draw [fill=black] (6,2) circle (1.8pt);
\draw [fill=black] (11,7) circle (1.8pt);
\draw [fill=black] (10.33,5) circle (1.8pt);

\end{scriptsize}
\draw (6,-1) node {$\delta>2$};
\draw (-.5,2.2) node {$C'$};
\draw (0.3,5.5) node {$0$};
\draw (6,3.5) node {$3$};
\draw (12,5.5) node {$0$};

\draw [<->] (16,6.5)--(21,6.5) node[midway,below] {$\delta=2$};

\begin{scope}[shift={(25,0)}]\draw [line width=1pt,domain=0:3] plot(\x,{(--10-3*\x)});
\draw [line width=1pt,domain=1:11] plot(\x,{(--2)});
\draw [line width=1pt,domain=9:12] plot(\x,{(-26 --3*\x)});
\begin{scriptsize}
\draw [fill=black] (1,7) circle (1.8pt);
\draw [fill=black] (1.67,5) circle (1.8pt);
\draw [fill=black] (6,2) circle (1.8pt);
\draw [fill=black] (11,7) circle (1.8pt);
\draw [fill=black] (10.33,5) circle (1.8pt);

\end{scriptsize}
\draw (6,-1) node {$\delta<2$};
\draw (0.3,5.5) node {$1$};
\draw (12.5,2.2) node {$C'$};
\draw (6,3.5) node {$1$};
\draw (12,5.5) node {$1$};
\end{scope}
\end{tikzpicture}
\end{center}

Let $(C,L)$ be as in the left picture. For any section $x\in H^0(L)^{\oplus n}$, $(C,L,x)$ is a $\delta>2$-stable quasi-map. At $\delta=2$, this quasi-map needs to be modified along the middle component $C'$ since the saturated subsheaf supported on $C'$ has the Hilbert polynomial $t+2$ and has the maximal slope among all saturated subsheaves. After the modification, we will have the quasi-map on the right side where the modified section $\tilde x$ is zero away from $C'$.

Since the locus in $\overline{\cM}_{0,5}$ having such decomposition is of codimension two, this locus needs to be blown up to define the modification.

\end{exam}

\subsection{From $\epsilon=0^+$ to $\delta=\infty$}\label{sec:MOPinf}
In this subsection, we relate the moduli stack $\fX_+^{\epsilon=0^+}$ to $\fX_+^{\delta=\infty}$. Recall from \cite{CKM, MOP} (See also Section \ref{s0.3.1}) that  $\fX_+^{\epsilon=0^+}$ (resp. $\fQ_+^{\epsilon=0^+}$) is defined as the stack of quadruples $(C,L,x,p)$ (resp. triples $(C,L,x)$) satisfying
\begin{enumerate}
\item $\olog_C\otimes L^\epsilon$ is ample for all $\epsilon>0$;
\item the support of the cokernel of $x$ is 0-dimensional and disjoint from marked points and nodal points.
\end{enumerate}
By Definition \ref{def-delstab2} and Lemma \ref{lem-use},  $\fX_+^{\delta=\infty}$  (resp. $\fQ_+^{\delta=\infty}$) is the stack of quadruples $(C,L,x,p)$ (resp. triples $(C,L,x)$)  satisfying
\begin{enumerate}
\item $\olog_C\otimes L^\epsilon$ is ample for all $\epsilon>0$;
\item $(\bar L,\bar x)$ is an $\infty$-stable pair on $\bar C$ with respect to $\omega_{\bar C}$
where $\rho:C\to \bar C$ is the stabilization and $\bar L=\rho_* L$.
\end{enumerate}
Recall that an $\infty$-stable pair means a $\delta$-stable pair for sufficiently large $\delta$ such that there are no other walls larger than $\delta$. By Definition \ref{def-delstab1}, $(\bar L,\bar x)$ is an $\infty$-stable pair if and only if $\coker \bar x$ is zero dimensional. Hence the second condition (2) above can be rephrased as the following two conditions:
\begin{enumerate}
\item[($2'$)] the cokernel of $x:\cO^{\oplus 5}\to L$ has support in the union of rational bridges and finitely many points;
\item[($2''$)]  $C$ is quasi-stable and the restriction of $L$ to a rational bridge $E=\PP^1$ is $\cO(1)$.
\end{enumerate}
Therefore, if $(C,L,x)\in \fQ_+^{\epsilon=0^+}$, the only condition that may fail for $\delta=\infty$-stability is ($2''$) above. The locus in $\modmop$ having a rational tail $E$ (with one marked point) and with $\deg L|_E>0$ is obviously a divisor. The locus in $\modmop$ having a rational bridge $E$ with $\deg L|_E>1$ is a substack of codimension 2. Let $\widetilde{\fQ}_{+}^\infty$ be the blow-up of $\modmop$ so that the locus in $\modmop$ of $\delta=\infty$-unstable quasi-maps is a Cartier divisor $D$.

As in the previous subsection, let $(\widetilde{\cC},\widetilde{\cL},\widetilde{x})$ be the pull-back to $\widetilde{\fQ}_{+}^\infty$ of the universal family of $\modmop$. Then $\widetilde{\cC}|_D$ has a subcurve $\cE$ of exceptional components where the $\delta=\infty$-stability fails. Let
$$\widetilde{x}': \cO^{\oplus 5}_{\widetilde{\cC}}\lra \widetilde{\cL}\hookrightarrow \widetilde{\cL}(\cE)=:\widetilde{\cL}'.$$
Then one finds that the degrees of $L$ on the destabilizing rational bridges are decreased by 2 and the degrees of $L$ on the destabilizing rational tails are decreased by 1. Let $\cE'$ be the subcurve where the $\delta=\infty$-stability fails for the family $(\widetilde{\cC},\widetilde{\cL}',\widetilde{x}')$. Then we modify the family along $\cE'$. We continue this way until the degrees of $L$ restricted to exceptional bridges are either 0 or 1 and the degrees of $L$ restricted to exceptional tails are 0. Finally, we contract all exceptional components on which the degree of $L$ is 0. Therefore we obtain a family of $\delta=\infty$-stable quasi-maps parametrized by $\widetilde{\fQ}_{+}^\infty$ and thus a morphism $$\widetilde{\fQ}_{+}^\infty\lra \fQ^{\delta=\infty}.$$

As in the previous subsection, the modification of $(C,L, x)$ is uniquely determined. Hence the above map descends to the morphism
$$ \mathfrak{q}_\infty\colon\modmop\lra  \fQ_{+}^{\delta=\infty} $$
which fits into a commutative diagram
\beq\label{diastquoinfty} \xymatrix{
\modmop \ar[d] \ar[r]^-{\mathfrak{q}_\infty}  & \fQ_{+}^{\delta=\infty}\ar[d]\\
\overline{\cM}_{0,m}\ar@{=}[r] & \overline{\cM}_{0,m}}
\eeq

Combining the results of \cite{MOP}, Section \ref{sec:deltawc} and Section \ref{sec:MOPinf}, we have the following:
\begin{theo}\label{thm:g0}
When $g=0$, we have the contraction morphisms
\[
\fQ_{+}^{\epsilon=\infty} \lra \cdots \lra \modmop\lra  \fQ_{+}^{\delta=\infty} \lra \cdots \lra  \fQ_{+}^{\delta=0^+}.
\]
\end{theo}

\subsection{Comparison of virtual fundamental classes}
In this subsection, we compare the virtual cycles of the moduli spaces in Theorem \ref{thm:g0} prior to the cosection localization. In \cite{Mano}, Manolache proved that if $c:F\to G$ is a virtually smooth proper morphism of \DM stacks of same virtual dimensions such that $G$ is connected and for the relative perfect obstruction theory $E^\bullet$ we have $h^1/h^0(E^{\bullet\vee})\cong [E^1/E^0]$, then $$c_*[F]\virt=N[G]\virt$$ for some $N\in\QQ$. Using this, the following was shown

\begin{lemm}
[{\cite[Proposition 3.14]{Mano}}] \label{lem:mano}
If we have a commutative diagram
\[\xymatrix{ F\ar[r]^c\ar[d]_{\epsilon} & G\ar[d]^\nu \\
\fM_1\ar[r] &\fM_2}\]
such that \begin{enumerate}
\item $F$ and $G$ have the same virtual dimensions and $G$ is connected;
\item $\fM_1,\fM_2$ are smooth algebraic stacks;
\item $\epsilon,\nu$ have relative perfect obstruction theories $E_\epsilon, E_\nu$;
\item there is a morphism $c^*E_\nu\to E_\epsilon$ whose cone is a perfect complex of amplitude $[-1,0]$,\end{enumerate}
then $c_*[F]\virt=N[G]\virt$ for some $N\in \QQ$.
\end{lemm}

By \eqref{diadelpdelm} and \eqref{diastquoinfty}, we obtain the following.

\begin{prop}
 \[ \mathfrak{q}_{\delta_0*}[\fQ_{+}^{\delta_+}]\virt=[\fQ_{+}^{\delta_-}]\virt\and \mathfrak{q}_{\infty *}[{\fQ}_{+}^{\epsilon=0^+}]\virt=[\fQ_{+}^{\delta=\infty}]\virt.  \]
\end{prop}

\begin{proof}\label{prop:mano}
We prove the first equality. We use the notation in Section \ref{sec:deltawc}. Over the smooth algebraic stack $\mathfrak{P}_+$, we have the commutative diagram
\[\xymatrix{
\widetilde{\fQ}^{\delta_0}_+ \ar[d]_{p} \ar[dr]^-{\widetilde{\mathfrak{q}}_{\delta_0}} & \\
{\fQ}_{+}^{\delta_+} \ar[r]_-{\mathfrak{q}_{\delta_0}} & \fQ^{\delta_-}_+
}\]
Since $\widetilde{\fQ}^{\delta_0}_+$ is a fiber product, we have $p_*[\widetilde{\fQ}^{\delta_0}_+]\virt =[\fQ_{+}^{\delta_+}]\virt $. We apply Lemma \ref{lem:mano} to the diagram
\[\xymatrix{
\widetilde{\fQ}^{\delta_0}_+ \ar[d] \ar[r]^-{\widetilde{\mathfrak{q}}_{\delta_0}} & \fQ^{\delta_-}_+\ar[d]\\
\mathfrak{P}_+ \ar@{=}[r] & \mathfrak{P}_+
}\]

The modification in each step of the morphism $\widetilde{\mathfrak{q}}_{\delta_0}$ is given by
\[\widetilde{x}_1:\cO_{\widetilde{\cC}}^{\oplus 5}\mapright{\widetilde{x}} \widetilde{\cL}\hookrightarrow \widetilde{\cL}(\cC')=:\widetilde{\cL}_1.\]
They induces the morphism between the relative obstruction theories
\[ {\widetilde{\mathfrak{q}}_{\delta_0}}^* (R\pi_*(\widetilde{\cL}_1^{\oplus 5}))^\vee \to (R\pi_*(\widetilde{\cL}^{\oplus 5}))^\vee \]
which is the morphism in Condition (4) of Lemma \ref{lem:mano}.
The cone of this morphism is $R\pi_*(\widetilde{\cL}|_{\cC'}^{\oplus 5})^\vee $, which is perfect of amplitude $[-1,0]$.
Thus, by Lemma \ref{lem:mano}, $\widetilde{\mathfrak{q}}_{\delta_0*}[\widetilde{\fQ}_{+}]\virt=N[\fQ_{+}^{\delta_-}]$. Since the moduli spaces are isomorphic to each other on an open set, we have $N=1$.
and hence $\mathfrak{q}_{\delta_0*}[\fQ_{+}^{\delta_+}]\virt=[\fQ_{+}^{\delta_-}]\virt$. The proof of the second equality is similar.
\end{proof}

\begin{rema}
Let $\mathfrak{q}_{\epsilon}:\fQ_{+}^{\epsilon=\infty} \to \modmop$ be the contraction morphism constructed in \cite{MOP}. In \cite{MOP,Mano}, it is shown that $ \mathfrak{q}_{\epsilon *}[\fQ_{+}^{\epsilon=\infty}]\virt=[\modmop]\virt$. So, $\epsilon$- and $\delta$-wall crossing does not change the virtual cycles. However, Lemma \ref{lem:mano} and Proposition \ref{prop:mano} do not hold for cosection-localized virtual cycle.
\end{rema}


\subsection{Wall crossing for $d=1$}
In this subsection we study the example on the CY side when $g=0$ and $d=1$. We do not assume that $m$ is an odd integer. Throughout this section, let $\ell=\lfloor \frac{m-1}{2}\rfloor$.

Recall that the moduli space $\fQ_+^{\epsilon=\infty}$ is the moduli space $\overline{M}_{0,m}(\PP^{n-1},1)$ of stable maps to $\PP^{n-1}$ of degree 1. We study the $\epsilon$- and $\delta$-wall crossing.

For a $\delta$-stable quasi-map $(C,L,x)$, the degree of $L$ on each component of $C$ must be nonnegative. Hence the line bundle $L$ has degree one on only one component and has degree zero elsewhere. When $d=1$, destabilizing locus as in Example \ref{ex:small} does not appear. So the modification at each wall is a divisorial contraction.

\begin{lemm} \label{lem:delta1}
There are $\ell-1$ walls for the $\delta$-wall crossing.
\end{lemm}
\begin{proof}
A $\delta=\infty$-stable quasi-map $(C,L,x)$ of degree $1$ is $\delta=0^+$-unstable if there is a subcurve $C'$ of $C$ such that the degree of $L|_{C'}$ is 1 and $C'$ has $h$ marked points where $2\le h\le \ell$. In such a case, the saturated subsheaf supported on $C'$ is a destabilizing subsheaf and has Hilbert polynomial $(h-1)t+1$. Hence such quasi-maps needs to be modified at the wall $\delta_0=\frac{m-2}{h-1}-2$. Thus there are $\ell-1$ walls.
\end{proof}

\begin{prop}\label{prop:deltawc}
The contraction map $\mathfrak{q}_{\delta_0}:\fQ_+^{\delta_+}\to \fQ_+^{\delta_-}$ at each wall $\delta_0$ is a blowup.
\end{prop}
\begin{proof}
Let $(C,L,x)$ be as in the previous lemma. After the modification at a wall, we get a $\delta=0^+$-stable quasi-map  $(C,\tilde L,\tilde x)$ where $\tilde L$ has degree one on the complement $C''$ of $C'$ and $\tilde x$ is zero on $C'$. Note that since $\Ext^1((0,\cO_{C'}),(n, \cO_{C''}))=\CC$, where $(n, F)$ denotes the pair of the sheaf $F$ and a nonzero section in $H^0(F)^{\oplus n}$, there is a unique modification $(C,\tilde L,\tilde x)$. On the other hand, $\Ext^1((n, \cO_{C''}),(0,\cO_{C'}))=\CC^{n+1}$, which shows the fiber of the contraction map is isomorphic to $\PP^{n}$.

Let $\widetilde \fQ_+^{\delta_-}$ be the blowup of $\fQ_+^{\delta_-}$ along the locus $\Delta$ of $\delta_+$-unstable quasi-maps. Let $\widetilde \cC\to \widetilde \fQ_+^{\delta_-}$ and $\widetilde x : \cO_{\widetilde \cC}^{\oplus n } \to  \widetilde \cL$ be the pullback of the universal family on $\widetilde \fQ_ +^{\delta_-}$. Let $\cC''$ be the divisor on $\widetilde \cC$ such that $\widetilde \cL |_{\cC''}$ parametrize the destabilizing subpair. Let $\widetilde x' : \cO_{\widetilde \cC}^{\oplus n } \to  \widetilde \cL\to \widetilde \cL(\cC'')$ be the modification. Then $(\widetilde \cC, \widetilde \cL , \widetilde x')$ parametrizes $\delta_+$-stable quasi-maps and hence we have a morphism $\xi : \widetilde \fQ_+^{\delta_-} \to \fQ_+^{\delta_+}$. One can check that the normal bundle of $\Delta$ at the $\delta_+$-unstable quasi-map given by an element in $\Ext^1((0,\cO_{C'}),(n, \cO_{C''}))$ has the fiber  $\Ext^1((n, \cO_{C''}),(0,\cO_{C'}))$ (See the similar calculations in \cite{cc}). Therefore $\xi$ is an isomorphism and hence $\mathfrak{q}_{\delta_0}$ is a blowup morphism.
\end{proof}

\begin{theo} \label{thm:g0d1}
Suppose $g=0$ and $d=1$.
\begin{enumerate}
\item
\begin{enumerate}
\item When $m$ is odd, $\fQ_+^{\delta=0^+}$ is a $\PP^{2n-1}$-bundle over $\overline{\cM}_{0,m}$.
\item When $m=2\ell+2$ is even, $\fQ_+^{\delta=0^+}$ is a blowup of a $\PP^{2n-1}$-bundle over $\overline{\cM}_{0,m}$ along $\frac12\binom{m}{\ell+1}$ copies of a $\PP^{n-1}$-bundle over $\overline{\cM}_{0,\ell+2}\times \overline{\cM}_{0,\ell+2}$.
\end{enumerate} 
\item $\fQ_+^{\delta=\infty}$ is obtained from $\fQ_+^{\delta=0^+}$ by a sequence of $\ell-1$ blowups, where the blowup centers for the blowup at $\delta=\frac{m-2}{h-1}-2$ ($2\le h\le \ell$) is a disjoint union of $\binom{m}{h}$ copies of a $\PP^{n-1}$-bundle over $\overline{\cM}_{0,h+1}\times \overline{\cM}_{0,m-h+1}$
\item $\fQ_+^{\epsilon=0^+}$ is a blowup of $\fQ_+^{\delta=\infty}$ along $m$ copies of $\PP^{n-1}$-bundle over $\overline{\cM}_{0,m}$.
\item
\begin{enumerate}\item When $n\le 2$, $\overline{\cM}_{0,m}(\PP^{n-1},1)$ is isomorphic to $\fQ_+^{\epsilon=0^+}$.
 \item When $n >2$, $\overline{\cM}_{0,m}(\PP^{n-1},1)$ is a blow up of $\fQ_+^{\epsilon=0^+}$ along a $\PP^{n-1}$-bundle over $\overline{\cM}_{0,m+1}$.
\end{enumerate}
\end{enumerate}
\end{theo}
\begin{proof}
(1) We study $\fQ_+^{\delta=0^+}$. When $m$ is odd, $\fQ_+^{\delta=0^+}$ is a $\PP^{2n-1}$-bundle over $\overline{\cM}_{0,m}$ by Theorem \ref{thm:g0delta0}. We assume $m$ is even. Note that for a $\delta=0^+$-stable quasi-map $(C,L,x)$, the line bundle $L$ is not uniquely determined when $C$ is the union of two subcurves $C'$ and $C''$ meeting at one point $q$ each of which has $\ell+1$ marked points. Namely, $L$ can have degree 1 on any of two components containing $q$.

We consider the space $\fQ'$ of $(C,L,x)$ as follows. If $C$ is not the union of two subcurves having the same number of marked points, we choose $L$ so that $(C,L,x)$ with any nonzero multi-section $x$ is a $\delta=0^+$-stable quasi-map. By the proof of Theorem \ref{thm:g0delta0}, there is unique such $L$. If $C$ is a union of two subcurves meeting at a point $q$ having the same number of marked points, there are two components containing $q$. We choose one of two component continuously, for example, the component which is close to the first marked point $q_1$. We choose $L$ so that $L$ has degree 1 on the chosen component and $x$ is any nonzero multi-section. Then clearly $\fQ\rq{}$ is a $\PP^{2n-1}$-bundle over $\overline{\cM}_{0,m}$.

An element in $\fQ'$ fails to be $\delta=0^+$-stable if $C=C'\cup C''$ decomposes as above where $L$ has degree zero on $C''$ and $x$ is zero on $C''$. By modifying along such loci, we get the space $\fQ_+^{\delta=0^+}$. By the same argument as in Proposition \ref{prop:deltawc}, we see that $\fQ_+^{\delta=0^+}$ is a blowup of $\fQ\rq{}$.

(2) $\fQ_+^{\delta=\infty}$ is obtained from $\fQ_+^{\delta=0^+}$ by a sequence of $\ell-1$ blowups by Lemma \ref{lem:delta1} and Proposition \ref{prop:deltawc}. The blowup center for the blowup at $\delta=\frac{m-2}{h-1}-2$ ($2\le h\le \ell$) is a disjoint union of $\binom{m}{h}$ copies of a $\PP^{n-1}$-bundle over $\overline{\cM}_{0,h+1}\times \overline{\cM}_{0,m-h+1}$, where $\PP^{n-1}$ parametrizes the sections $\tilde x$ which vanish along $C'$ in the notation of Proposition \ref{prop:deltawc}.

(3) We now relate $\fQ_+^{\delta=\infty}$ with $\fQ_+^{\epsilon=0^+}$. A $\delta=\infty$-stable quasi-map $(C,L,x)$ is $\epsilon=0^+$-unstable if the base point lies on one of the marked points, say $q_i$. Recall that the base point is the point on which $x$ is zero. In this case, to get an $\epsilon=0^+$-stable quasi-map, we add a new rational component having the marked point $q_i$ which meets the rest of the curve $C$ at the position of $q_i$. Now the nonzero multi-section $x$ can be arbitrary. This procedure is also a blowup. So $\fQ_+^{\epsilon=0^+}$ is obtained from $\fQ_+^{\delta=\infty}$ by blowing up along $m$ copies of $\PP^{n-1}$-bundle over $\overline{\cM}_{0,m}$.

(4) Finally, we study the contraction map $\mathfrak{q}:\overline{M}_{0,m}(\PP^{n-1},1)=\fQ_+^{\epsilon=\infty}\to \fQ_+^{\epsilon=0^+}$. When $n\le 2$, it is not hard to show that the contraction map is an isomorphism (See \cite[Proposition 5.2]{CKcam}). So, we assume $n>2$. Let $(C,L,x)$ be a quasi-map in $\fQ_+^{\epsilon=0^+}$. Then the quasi-map $(\tilde C, \tilde L, \tilde x)$ that is in the fiber of $\mathfrak{q}$ is obtained as follows. Since the degree of $L$ is 1, there can be at most one base point. If $(C,L,x)$ does not have a base point, then $(\tilde C, \tilde L, \tilde x)=(C,L,x)$. Assume that $(C,L,x)$ has a base point $q$. Then $\tilde C$ is obtained from $C$ by adding a rational tail with no marked point at the base point and $\tilde L$ is the line bundle on this new curve $\tilde C$ having degree one on the added rational tail and having degree zero on $C$. The section $x\in H^0(L)^{\oplus n}$ having a base point comes from a section $x'$ in $H^0(L(-q))^{\oplus n}\simeq H^0(\cO_{C})^{\oplus n}$. The section $\tilde x \in H^0(\tilde L)^{\oplus n}$ is defined by extending $x'$ to $\tilde C$. On the moduli spaces, one can check that this process is also given by a blowup along the locus of quasi-maps having base point, which is isomorphic to $\PP^{n-1}$-bundle over $\overline{\cM}_{0,m+1}$ where the last marked point indicates the base point.
\end{proof}

Using Theorem \ref{thm:g0d1}, we can derive an explicit formula for the Poincar\'{e} polynomial of  $\overline{M}_{0,m}(\PP^{n-1},1)$ from the Poincar\'{e} polynomial of $\overline{\cM}_{0,m}$, where the latter is well known.

\begin{defi}
The  Poincar\'{e} polynomial of the space $X$ is
 $$P_t(X)=\sum_{i\ge 0}b_{2i}t^i,$$
 where $b_{2i}$ is the $2i$-th Betti number of $X$. Note that the odd Betti numbers of all the moduli spaces we consider here are zero.
\end{defi}

\begin{coro}\label{thm:d1poincare}
Let $P_m$ be the Poincar\'{e} polynomial of $\overline{\cM}_{0,m}$.
\begin{enumerate}
\item $ P_t(\fQ_+^{\delta=0^+})=\left\{\begin{array}{ll} \frac{1-t^{2n}}{1-t}P_m &\text{if $m$ is odd;}\\
\frac{1-t^{2n}}{1-t}P_m+\frac12\binom{m}{\ell+1} \frac{1-t^{n}}{1-t} P_{\ell+2}P_{\ell+2} \frac{t-t^{n+1}}{1-t}& \text{if $m$ is even.}\end{array}\right.$

\item The Poincar\'{e} polynomial of $\overline{M}_{0,m}(\PP^{n-1},1)$ when $n\ge 3$ is
\begin{align*}
P_t(\fQ_+^{\delta=0^+})+ \sum_{h=2}^\ell \left(\binom{m}{h} \frac{1-t^{n}}{1-t} P_{h+1}P_{m-h+1}\right)  \frac{t-t^{n+1}}{1-t}\\
+ m \frac{1-t^{n}}{1-t} P_m \frac{t-t^{n}}{1-t}+ \frac{1-t^{n}}{1-t} P_{m+1}\frac{t-t^{n-1}}{1-t}.
\end{align*}
When $n\le 2$, the same formula without the last term holds
\end{enumerate}
\end{coro}

\begin{rema} Since $\overline{\cM}_{0,3}$ is a point, we have $P_3=1$. For $m\ge 3$, there is a recursive construction of $\overline{\cM}_{0,m}$ in \cite{Keel}. Namely, $\overline{\cM}_{0,m+1}$ is obtained by blowing up $\overline{\cM}_{0,m}\times \overline{\cM}_{0,4}$ along codimension 2 loci each of which is isomorphic to $\overline{\cM}_{0,h+1}\times \overline{\cM}_{0,m-h+1}$ for some $2\le h\le \ell$. Therefore, we have the following recursive formula for $P_m$.

When $m=2\ell+1$ for $\ell\ge 1$,
\[
P_{m+1} = (1+t) P_m+\sum_{h=2}^\ell \left(\binom{m}{h} tP_{h+1}P_{m-h+1}\right),
\]
and when $m=2\ell+2$ for $\ell\ge 1$,
\[
P_{m+1} = (1+t) P_m+\sum_{h=2}^{\ell-1} \left(\binom{m}{h} tP_{h+1}P_{m-h+1}\right) + \frac12 \binom{m}{\ell}  tP_{\ell+1}P_{\ell+1}  .
\]

This formula can be rederived by using Corollary \ref{thm:d1poincare} as follows. For $n\ge 3$, we have a surjective morphism $\psi : \overline{M}_{0,m}(\PP^{n-1},1) \to Gr(2,n)$ sending a stable map to its image line in $\PP^{n-1}$. This is a fibration morphism where the fiber is isomorphic to $\overline{M}_{0,m}(\PP^{1},1) $. Hence we have
\[P_t(\overline{M}_{0,m}(\PP^{n-1},1) )= P_t(\overline{M}_{0,m}(\PP^{1},1) ) P_t(Gr(2,n)). \]
By rearranging this equation after substituting the result of Corollary \ref{thm:d1poincare}, we obtain the above recursive relation.
\end{rema}

\begin{exam}\label{exam:FM}
  When $n=2$, it is well-known that the moduli space $\overline{M}_{0,m}(\PP^1,1)$ is isomorphic to the configuration space $\PP^1[m]$ by Fulton and MacPherson \cite{FM}. Hence, the wall-crossing described above gives an alternative construction of the Fulton-MacPherson configuration space. Namely, $\PP^1[m]$ is given by a sequence of blow-ups of a $\PP^3$-bundle over $\overline{\cM}_{0,m}$. For example, the space $\PP^1[5]$ is obtained from $\PP^3$-bundle over $\overline{\cM}_{0,5}$ by a blowup along 10 disjoint $\PP^1$-bundles over $\overline{\cM}_{0,3}\times\overline{\cM}_{0,4}$ followed by another blowup along 5 disjoint $\PP^1$-bundles over $\overline{\cM}_{0,5}$. One can calculate the Poincar\'{e} polynomial of $\PP^1[5]$ as
  \[P_t(\PP^1[5])= 1+ 21t+ 67t^2+ 67t^3+ 21t^4+ t^{5},\]
  which agrees with the calculations in \cite{FM,KiemMoon,LiLi}.
\end{exam}

\section{A residue formula for $[\fX_\pm^{\delta=0^\pm}]\vilo$}\label{sec:residue}
Our main interest lies in the GW invariant and its comparison with the FJRW invariant. In this section, we compare the two invariants after $\epsilon$- and $\delta$-wall crossing. For this, we assume that there are no strictly $\delta=0^\pm$-semistable quasi-maps. Numerically, the condition is that
\begin{equation}\label{eq:coprime}
\gcd(d-g+1, 2g-2+m)=1\text{ and }\gcd(-5d+g-1+m, 2g-2+m)=1.
\end{equation}

By the dilaton equation \eqref{edila} and \cite[Theorem 4.2.9]{FJR}, for the GW and FJRW invariants, we can add marked points and cancel its effect by capping it with corresponding $\psi$ classes. Hence by adding more marked points if necessary, we assume the above numerical conditions throughout this section.
Consequently, the $\delta=0^\pm$-stability of $(C,L,x,p)$ is nothing but the stability of $\bar L$ as a sheaf on $\bar C$.

Let $\bar P=\bar{P}_{g,m,d}$ denote the moduli stack of pairs $(\bar C, \bar L)$, where $\bar C\in  \mgnst$ is a stable curve of genus $g$ with $m$ marked points and $\bar L$ is a (Gieseker-)stable sheaf with respect to the ample line bundle $\olog_{\bar C}$ which is of rank at most 1 on every component of $\bar C$ and of degree $d$. By the GIT construction of Simpson's in \cite{Simpson}, $\bar{P}$ is a proper \DM stack provided that there is no strictly semistable sheaf.  It is well known (see \cite{Alex} for instance) that the semistability condition for $\bar L$ is equivalent to the balanced condition for $L$ in \cite{Capo} when $m=0$.

We have the forgetful morphism $\bar P\to \mgnst$ where $\mgnst$ is the proper smooth stack of stable curves. Although the morphism $\bar P\to \mgnst$ is not smooth, $\bar P$ is also smooth.

\begin{lemm} \label{le6.3}
(1) $\bar P$ is a smooth \DM stack of dimension $4g-3+m$ which is proper when $\gcd(d-g+1,2g-2+m)=1$. \\
(2) Let $\fP^s$ denote the open substack of  the stack $\fP$ which consists of pairs $(C,L)$ with $C$ semistable and $L\in \mathrm{Pic}^d(C)$ such that $\olog_C\otimes L^\epsilon$ is ample for any $\epsilon>0$ and that  $(\bar C, \bar L)\in \bar P$ where $\rho:C\to \bar C$ is the stabilization and $\bar L=\rho_* L$. Let $\bar \fP^s=\fP^s/\CC^*$ where $\CC^*$ denotes the group of automorphisms of line bundles by scalar multiplications.
Then we have an isomorphism $\bar \fP^s\cong \bar P$.
\end{lemm}
\begin{proof} (1)  The proof is essentially due to Faltings in \cite[Theorem 4.1]{Faltings}.
Let $R$ be an Artin local ring over $\CC$ and $I$ be a square zero ideal of $R$. Let $\bar R=R/I$.
Suppose we have a stable curve $\bar \cC\to \Spec \bar R$ and a stable sheaf $\bar \cL$ on $\bar \cC$.
Then we can find a lift to $\Spec R$. Indeed, near a node in the central fiber, we may find $\bar p, \bar q\in \bar R$ and $\bar \pi=\bar p\bar q$ so that we can write $\bar \cC$ formally as $\bar R[\![x,y]\!]/(xy-\bar \pi)$ by \cite[Theorem 3.9]{Faltings}. Choose a lift $p, q\in R$ of $\bar p, \bar q$ and let $\pi=pq\in R$. Since the deformation of smooth variety is trivial, we can glue the trivial deformation of the smooth part with the deformation $R[\![x,y]\!]/(xy-\pi)$ of the node over $\Spec R$, to obtain a stable curve $\cC\to \Spec R$. By \cite[Theorem 3.9]{Faltings} again, the factorization $pq=\pi$ determines a torsion-free extension $\cL$ of $\bar \cL$ near the node and we can glue this with the trivial extension on the smooth part because the obstruction for gluing vanishes since the fiber dimension is 1.

(2) It is straightforward to check that $\bar \fP^s$ is \DM and the morphism $\bar \fP^s\to \bar P$ is birational and bijective. (For a set-theoretic inverse, we insert a rational bridge with $\cO(1)$ on it whenever we find a node where the torsion-free sheaf $\bar L$ on $\bar C$ is not locally free.)
The isomorphism follows from Zariski's main theorem because $\bar P$ is smooth by (1).
\end{proof}

\begin{prop}\label{p6.2n}
Suppose $d\ge 3(g-1)+m$ and $\gcd(d-g+1, 2g-2+m)=1$ so that $\bar P$ is a proper smooth \DM stack. Then
\beq\label{resfor}
[\fX_+^{\delta=0^+}]\vilo = \mathrm{res}_{t=0}\frac{[\bar P_{g,m,d}]}{e(R\pi_*(\cL^{\oplus 5}\oplus \cH om(\cL^{5},\omega_{\cC/\bar P})))}
\eeq
where $e(\cdot)$ stands for the Euler class of the perfect complex.
Here $\pi:\cC\to \bar P$ denotes the universal curve and $\cL$ is the universal sheaf on $\cC$.
\end{prop}
\begin{proof}
This is a consequence of the torus localization theorem for cosection-localized virtual cycles in \cite{CLK}.
By the proof of Theorem \ref{thm:properdeg} (See Remark \ref{nre5}), we find that if $(C,L,x,p)\in \fX_+^{\delta=0^+}$, then $p=0$. Hence $\fX_+^{\delta=0^+}=\fQ_+^{\delta=0^+}$ is the projectivization $\PP C(\pi_*\cL^{\oplus 5})$ of the cone $C(\pi_*\cL^{\oplus 5})$ in \eqref{econ}. Consider the compactification
$$C(\pi_*\cL^{\oplus 5})\cup \PP C(\pi_*\cL^{\oplus 5}) = \PP \left[C(\pi_*\cL^{\oplus 5})\oplus \cO\right]$$
of the cone with the obvious action of $\CC^*$ arising from the cone structure. The fixed loci are exactly the boundary at infinity and the zero section $\bar P$. Upon applying the torus localization formula in \cite{CLK} and taking the residue, we obtain  the proposition.
\end{proof}

Similarly on the LG side, when $\delta=0^-$, $\bar L$ must be stable since there are no strictly semistable sheaves on $\bar C$. Therefore, we have
$$\fX_-^{\delta=0^-}= \{(C,L,x,p)\,|\, \bar L \text{ is stable over } \bar C,  p\ne 0 \}.$$

\begin{prop}\label{p6.2LG}
Suppose $d<-\frac15 (g-1+m)$ and $\gcd(-5d+g-1+m,2g-2+m)=1$. Let $\tilde d= -5d+2g-g+m$.
Then,
\begin{equation}\label{eK2}
 [\fX_-^{\delta=0^-}]^{\rm vir}_{\rm loc} = r\cdot\mathrm{res}_{t=0}\frac{[\bar{P}_{g,m,\tilde d}]}{e(R\pi_*(\cL^{\oplus 5}\oplus  \cH om(\cL^{5},\omega_{\cC/\bar{P}})))},\end{equation}
where  $r$ is the degree of the finite morphism $\mathfrak{P}^{\rm tw}\to \mathfrak{P}^{\rm tw}$ sending $L$ to $\tilde L = L^{-5}\otw$
\end{prop}
\begin{proof}
When $(C,L,x,p)$ is $\delta=0^-$-stable and $-5d+\delta> g-1+m$, by the proof of Theorem \ref{thm:LGproper}, one can show that $x$ is always zero. So by the same argument as in Proposition \ref{p6.2n}, we obtain the same residue formula for $[\fQ^{{\delta=0^-}}]\vilo$.

After fixing the multiplicity vector $\vec{k}$, we see that $(\bar C,\bar L)$ uniquely determines $(C,\tilde L)$ by a local computation. So, by the diagram \eqref{lgdiagram}, we obtain \eqref{eK2}.
\end{proof}

\begin{rema}
The equations \eqref{resfor} and \eqref{eK2} are of the same form but have opposite ranges for $d$. So the wall crossing for  $[\fX_+^{\delta=0}]\vilo$ to  $[\fX_-^{\delta=0}]\vilo$ seems to require an analytic continuation as expected.
\end{rema}

\section{Insertions}\label{sec:insertion}
In this section, we define the invariants with insertions for $\delta$-stable quasi-maps. Note that since for $\delta$-stable quasi-maps the base points are allowed to lie on the marked points, there is no well-defined evaluation maps as in the $\epsilon$-stable quasi-map theory. Alternatively, we will define the invariants by directly imposing the conditions corresponding to the insertion on the moduli space.

For the insertion $\zeta_l=\prod_{i=1}^m \mathrm{ev}_i^*(h^{l_i})$ with $h=c_1(\cO_{\PP^4}(1))\in H^2(\PP^4)$,
consider the stack
\[
\fX(\zeta_l)=\{(C,L,x,p)\in \fX\,|\, x=(x_1,\dots,x_5), \ x_j\in H^0(C,L_j),\ p\in H^0(L^{-5}\omega_C)\}
\]
with $L_j=L(-\sum_{i=1}^m\lambda_{ij}q_i)$ where
\[
\lambda_{ij}=\left\{\begin{matrix} 0 & j>l_i\\ 1 & j\le l_i\end{matrix}\right.
\]
and $q_1,\dots, q_m$ are the marked points on $C$.
By the exact sequence
\beq\label{i32}
0\lra H^0(L_j)\lra H^0(L)\lra H^0(\oplus_{i=1}^m \CC_{q_i}^{\lambda_{ij}})\lra H^1(L_j)\lra H^1(L)\lra 0
\eeq
from the exact sequence $0\to L_j\to L\to \oplus_{i=1}^m \CC_{q_i}^{\lambda_{ij}}\to 0$, we find that $\fX(\zeta_l)$ is a closed substack of $\fX$.

Let $\pi^l:\cC(l)\to \fX(\zeta_l)$ be the universal curve and $\cL$ be the line bundle arising from the
morphism $\fX(\zeta_l)\hookrightarrow \fX\to \fP_{g,m,d}$. Let $\cL_j=\cL(-\sum_{i=1}^m\lambda_{ij}q_i)$.
By \cite{ChangLi}, the cone stack $\fX(\zeta_l)$ has
the relative perfect obstruction theory
\beq\label{i33}\def\bbL{\mathbb{L} }
\bbL_{\pi^l}^\vee\lra R\pi^l_*(\oplus_{j=1}^5\cL_j)\oplus R\pi^l_*(\cL^{-5}\omega_{\pi^l})
\eeq
which induces an absolute perfect obstruction theory of virtual dimension
\[
m-|l|=m-\sum_{i=1}^ml_i
\]
because $\fP_{g,m,d}$ is smooth.

By \eqref{i32} and \eqref{i33}, the relative obstruction space at a point $(C,L,x,p)$ of $\fX(\zeta_l)$ is
\[
H^1(\oplus_{j=1}^5L_j)\oplus H^1(L^{-5}\omega_C)
\]
which surjects onto the relative obstruction space at $(C,L,x,p)$ of $\fX$
$$H^1(L^{\oplus 5})\oplus H^1(L^{-5}\omega_C).$$
Thus it is straightforward that the cosection $\sigma$ for $\fX$ induces a cosection $\sigma_l$ of $\fX(\zeta_l)$ with
\[
\sigma^{-1}(0)=\sigma^{-1}_l(0) .
\]

Now we add stability. Let $\cU$ be an open separated \DM substack of $\fX$ such that $\cU\cap \sigma^{-1}(0)$ is proper.
For example, $\cU$ can be the substack of $\epsilon$ or $\delta$-stable quadruples $(C,L,x,p)$ in $\fX_+$ or $\fX_-$ considered in this paper. Let
\[
\cU(\zeta_l)=\cU\cap \fX(\zeta_l) .
\]
Then by the discussion above, we have a cosection-localized virtual fundamental class
\[
[\cU(\zeta_l)]\virt_{\mathrm{loc}}\in A_{m-|l|}(\cU(\zeta_l)\cap \sigma^{-1}(0))
\]
which has proper support.
We define the invariant with respect to the stability $\cU\subset \fX$ to be
\beq\label{i39}
\langle\!\langle \prod_{i=1}^m\tau_{a_i}(h^{l_i}) \rangle\!\rangle_\cU=\int_{[\cU(\zeta_l)]\virt_{\mathrm{loc}}} \prod_{i=1}^m\psi_i^{a_i}
\eeq

Note that when $\cU=\overline{M}_{g,m}(\PP^4,d)^p$ is the stack of stable maps with $p$-fields,
\[
[\overline{M}_{g,m}(\PP^4,d)^p]\vilo \cap \prod_{i=1}^m\psi_i^{a_i}\mathrm{ev}_i^*(h^{l_i})
=[\cU(\zeta_l)]\vilo\cap \prod_{i=1}^m\psi_i^{a_i}
\]
and hence \eqref{i39} coincides with the usual Gromov-Witten invariant with insertions.

In this way, we obtain invariants with insertions for open \DM substacks $\cU$ in $\fX$.

\bibliographystyle{amsplain}

\begin{thebibliography}{99}

\bibitem{Alex} V. Alexeev. {\em Compactified Jacobians and Torelli map.} Publ. Res. Inst. Math. Sci. \textbf{40} (2004), 1241--1265.

\bibitem{AV} D. Abramovich and A. Vistoli. {\em Compactifying the space of stable maps.} Jour. Amer. Math. Soc. \textbf{15} (2001), 27--75.

\bibitem{Capo} L. Caporaso. {\em A compactification of the universal Picard variety over the moduli space of stable curves}. J. Amer. Math. Soc. \textbf{7} (1994), no. 3, 589--660.

\bibitem{CLK} H.-L. Chang, Y.-H. Kiem, and J. Li. \emph{Torus localization and wall crossing formulas for cosection localized virtual cycles.} Advances in Mathematics, \textbf{308}, 964--986, 2017.

\bibitem{ChangLi} H.-L. Chang and J. Li. {\em Gromov-Witten invariants of stable maps with fields.} Int Math Res Notices 2012 (18): 4163--4217.

\bibitem{CLL} H.-L. Chang, J. Li, and W.-P. Li. {\em Witten’s top Chern class via cosection localization}. Invent. math. DOI 10.1007/s00222-014-0549-5.

\bibitem{CLLL} H.-L. Chang, J. Li, W.-P. Li, and C. M. Liu. {\em Mixed-Spin-P fields of Fermat quintic polynomials}. arXiv: 1505.07532.

\bibitem{Cheong} D. Cheong. {\em A relative stack of principal $G$-bundles over $\mathcal{M}^{tw}_{g,n}$}. Int. J. Algebra Comput. \textbf{25}(3) (2015), 481--492.

\bibitem{CCFK} D. Cheong, I. Ciocan-Fontanine, and B. Kim. {\em Orbifold quasimap theory}. Math. Ann. {\bf 363}(3) (2015), 777--816.

\bibitem{Chiodo} A. Chiodo. {\em Stable twisted curves and their $r$-spin structures}. Ann. Inst. Fourier \textbf{58}(5) (2008), 1635--1689.

\bibitem{ChiodoRuan} A. Chiodo and Y. Ruan. {\em Landau–Ginzburg/Calabi–Yau correspondence for quintic three-folds via symplectic transformations.} Invent. Math. {\bf 182}(1):117–-165, 2010.

\bibitem{cc} J. Choi and K. Chung. {\em Moduli spaces of $\alpha$-stable pairs and wall-crossing on $\mathbb{P}^2$}. J. Math. Soc. Japan. {\bf 68}(2), 685--709.

\bibitem{CKcam} J. Choi and Y.-H. Kiem. \emph{Landau-Ginzburg/Calabi-Yau correspondence via wall-crossing}. Chinese Annals of Mathematics, Series B {\bf 38}(4) (July 2017), 883–-900

\bibitem{CFK} I. Ciocan-Fontanine and B. Kim. {\em Moduli stacks of stable toric quasimaps.} Adv. Math. \textbf{225} (2010), 3022--3051.
\bibitem{CFKwcf1} I. Ciocan-Fontanine and B. Kim. {\em Wall-crossing in genus zero quasimap theory and mirror maps.} Algebr. Geom. {\bf 1}(4) (2014), 400–-448.
\bibitem{CFKwcf2} I. Ciocan-Fontanine and B. Kim. {\em Higher genus quasimap wall-crossing for semi-positive targets.} J. Eur. Math. Soc. {\bf 19}(7) (2017), 2051-–2102.

\bibitem{CKM} I. Ciocan-Fontanine, B. Kim and D. Maulik. {\em Stable quasimaps to GIT quotients.} J. Geom. Phys. \textbf{75} (2014), 17--47.

\bibitem{Clader} E. Clader. {\em Landau-Ginzburg/Calabi-Yau correspondence for the complete intersections $X_{3,3}$ and $X_{2,2,2,2}$.} Adv. Math. {\bf 307}, 1--52


\bibitem{Edidin} D. Edidin. {\em Notes on the construction of the moduli space of curves.} Recent progress in intersection theory (Bologna, 1997), 85--113, Trends Math., Birkhäuser Boston, Boston, MA, 2000.

\bibitem{Faltings} G. Faltings. {\em Moduli-stacks for bundles on semistable curves}. Math. Ann. \textbf{304} (1996), 489--515.

\bibitem{FJR} H. Fan, T. Jarvis and Y. Ruan. {\em The Witten equation, mirror symmetry, and quantum singularity theory.} Ann. Math. \textbf{178} (2013), 1–-106

\bibitem{FJR15} H. Fan, T. Jarvis and Y. Ruan. {\em A mathematical theory of the gauged linear sigma model.} Geom. Topol.
{\bf 22}(1) (2018), 235--303.

\bibitem{FM} W. Fulton and R. MacPherson. {\em A Compactification of Configuration Spaces.} Ann. Math. {\bf 139}(1), 1994, 183--225.

\bibitem{GuoRoss} S. Guo and D. Ross. {\em The Genus-One Global Mirror Theorem for the Quintic Threefold.} To appear in Algebr. Geom.

\bibitem{HL2} D. Huybrechts and M. Lehn. {\em Framed modules and their moduli.}
Internat. J. Math. {\bf 6} (1995), 297--324.

\bibitem{Huybrechts} D. Huybrechts and M. Lehn. {\em The geometry of moduli spaces of
sheaves}. Aspects of Mathematics, E31. Friedr. Vieweg Sohn, Braunschweig, 1997.

\bibitem{Keel} S. Keel. {\em Intersection theory of moduli space of stable $n$-pointed curves of genus zero}. Trans. Amer. Math. Soc. \textbf{330}(2), 545--574.

\bibitem{KiemLi} Y.-H. Kiem and J. Li. {\em Localizing virtual cycles by cosections.} Journal of Amer. Math. Soc. \textbf{26}(4), 1025--1050.

\bibitem{KiemMoon} Y.-H. Kiem and H.-B. Moon. {\em Moduli space of weighted pointed stable rational curves via GIT}. Osaka J. Math. \textbf{48}(2011), 1115-1140.

\bibitem{KL3} Y.-H. Kiem and J. Li. \emph{ A wall crossing formula of Donaldson-Thomas invariants without Chern-Simons functional.} Asian J. Math. \textbf{17}(1), 63--94.


\bibitem{LePotier} J. Le Potier. {\em Systèmes cohérents et structures de niveau.} Asterisque No. 214 (1993).

\bibitem{LPS} Y.-P. Lee, N. Priddis and M. Shoemaker. {\em A proof of the Landau-Ginzburg/Calabi-Yau correspondence via the crepant transformation conjecture}. Ann. Sci. \'Ec. Norm. Sup\'er. (4) {\bf 49}(6) (2016), 1403-–1443.

\bibitem{LiLi} L. Li. {\em Chow Motive of Fulton–MacPherson Configuration Spaces and Wonderful Compactifications}. Michigan Math. J. \textbf{58}(2) (2009) 565--598.

\bibitem{Mano} C. Manolache. {\em Virtual pushforwards.} Geom. Topol. \textbf{16}(4), 2003--2036

\bibitem{MOP} A. Marian, D. Oprea and R. Pandharipande. {\em The moduli space of stable quotients}. Geom. Topol. {\bf 15}(3) 1651-–1706.

\bibitem{RossRuan} D. Ross and Y. Ruan. \emph{Wall-crossing in genus zero Landau-Ginzburg theory.} J. Reine. Angew. Math. DOI: 10.1515/crelle-2015-0005.

\bibitem{Simpson} C. Simpson. {\em Moduli of representations of the fundamental group of a smooth projective variety. I.} Inst. Hautes Études Sci. Publ. Math. {\bf 79} (1994), 47--129.

\bibitem{Toda} Y. Toda. {\em Moduli spaces of stable quotients and wall crossing phenomena}. Compositio Math. \textbf{147} (2011), 1479-–1518.

\bibitem{Wit93} E. Witten. {\em Phases of N = 2 theories in two dimensions}, Nucl. Phys. B {\bf 403} (1993), 159-222
\end{thebibliography}

\end{document}